\definecolor{amethyst}{rgb}{0.6, 0.4, 0.8}
\renewcommand*{\backref}[1]{}
\renewcommand*{\backrefalt}[4]{%
  \ifcase #1 %
    \relax
  \or
    $\uparrow$#2.%
  \else
    $\uparrow$#2.%
  \fi%
}
\newcommand{\defi}[1]{\textsf{#1}} 
\newcommand{\F}{\mathbb{F}}
\newcommand{\G}{\mathbb{G}}
\newcommand{\N}{\mathbb{N}}
\newcommand{\PP}{\mathbb{P}}
\newcommand{\Q}{\mathbb{Q}}
\newcommand{\Z}{\mathbb{Z}}
\newcommand{\Qbar}{{\overline{\Q}}}
\newcommand{\calL}{\mathcal{L}}
\newcommand{\calM}{\mathcal{M}}
\newcommand{\calN}{\mathcal{N}}
\newcommand{\calO}{\mathcal{O}}
\newcommand{\calP}{\mathcal{P}}
\renewcommand{\div}{\operatorname{Div}}
\DeclareMathOperator{\Div}{Div}
\DeclareMathOperator{\im}{im}
\DeclareMathOperator{\lcm}{lcm}
\DeclareMathOperator{\NS}{NS}
\DeclareMathOperator{\Norm}{Norm}
\DeclareMathOperator{\ord}{ord}
\DeclareMathOperator{\Pic}{Pic}
\DeclareMathOperator{\Proj}{Proj}
\DeclareMathOperator{\rk}{rk}
\DeclareMathOperator{\Spec}{Spec}
\DeclareMathOperator{\tr}{tr}
\DeclareMathOperator{\id}{id}
\renewcommand{\exp}{\operatorname{exp}}
\newcommand{\Coh}{\operatorname{Coh}}
\newcommand{\Geo}{\operatorname{Geo}}
\newcommand{\CQC}{\operatorname{Coh}}
\newcommand{\GQC}{\operatorname{Geo}}
\newcommand{\Nek}{\operatorname{Nek}}
\newcommand{\ol}{\overline}
\newcommand{\tensor}{\otimes} 
\newcommand{\Lc}{\mathcal{L}}
\newcommand{\dual}{\vee}
\newcommand{\sm}{\textnormal{sm}}
\newcommand*{\SheafIsom}{\mathrm{I}\kern -.5pt som}
\renewcommand{\phi}{\varphi}
\newcommand{\x}{\mathbf{x}}
\newcommand{\dR}{\textnormal{dR}}
\theoremstyle{plain}
\theoremstyle{plain}
\newtheorem{theorem}{Theorem}[section]
\newtheorem{lemma}[theorem]{Lemma}
\newtheorem{corollary}[theorem]{Corollary}
\newtheorem{proposition}[theorem]{Proposition}
\newtheorem{intheorem}{Theorem}
\theoremstyle{definition}
\newtheorem{definition}[theorem]{Definition}
\newtheorem{example}[theorem]{Example}
\newtheorem{myalgorithm}[theorem]{Algorithm}
\newtheorem{remark}[theorem]{Remark}
\newenvironment{enumalg}
{\begin{enumerate}}
{\end{enumerate}}
\title[Geometric Quadratic Chabauty]{Geometric Quadratic Chabauty and $p$-adic heights}
\dedicatory{In memory of Bas Edixhoven.}
\thanks{JDR was supported by a grant from the Simons Foundation (550029, to John Voight).}
\thanks{SH was supported by National Science Foundation grant DGE-1840990.}
\thanks{PS was partially supported by NWO grant VI.Vidi.193.006.}
\author[Duque-Rosero]{Juanita Duque-Rosero}
\address{Juanita Duque-Rosero, Dartmouth College, 6188 Kemeny Hall, Hanover, NH 03755, USA}
\email{juanitaduquer@gmail.com}
\urladdr{\url{https://math.dartmouth.edu/~jduque/}}
\author[Hashimoto]{Sachi Hashimoto}
\address{Sachi Hashimoto, Max-Planck-Institut f\"ur Mathematik
in den Naturwissenschaften, Inselstra{\ss}e 22, 04103 Leipzig, Germany}
\email{sachi.hashimoto@mis.mpg.de}
\urladdr{\url{https://sachihashimoto.github.io}}
\author[Spelier]{Pim Spelier}
\address{Pim Spelier, Mathematisch Instituut, Universiteit Leiden, Postbus 9512, 2300 RA Leiden, Netherlands}
\email{spelierp@math.leidenuniv.nl}
\urladdr{\url{https://sites.google.com/view/pim-spelier/home}}
\begin{document}

\begin{abstract}
Let $X$ be a curve of genus $g>1$ over $\Q$ whose Jacobian $J$ has Mordell--Weil rank $r$ and N\'eron--Severi rank $\rho$. 
When $r < g+ \rho - 1$, the geometric quadratic Chabauty method determines a finite set of $p$-adic points containing the rational points of $X$. 
We describe algorithms for geometric quadratic Chabauty that translate the geometric quadratic Chabauty method into the language of $p$-adic heights and $p$-adic (Coleman) integrals. This translation also allows us to give a comparison to the (original) cohomological method for quadratic Chabauty. We show that the finite set of $p$-adic points produced by the geometric method is contained in the finite set produced by the cohomological method, and give a description of their difference. 
\end{abstract}
\maketitle 
\date{\today}

\section{Introduction}
Let $X_\Q$ be a smooth, projective, geometrically irreducible curve of genus $g>1$ over $\Q$. The problem of describing $X_\Q(\Q)$, the set of rational points of $X_{\Q}$, has fascinated mathematicians for centuries. A famous conjecture of Mordell \cite{Mordell} is that, for $g>1$, the set $X_{\Q}(\Q)$ is finite. Faltings's theorem states that Mordell's conjecture is true \cite{Faltings}. However, Faltings's theorem is not effective, meaning that it does not give a method to determine the set of rational points. There is still an ongoing effort to find explicit methods to compute the set $X_\Q(\Q)$. Chabauty's theorem \cite{Chabauty} gives a finiteness result for $X_\Q(\Q)$ on certain curves by using $p$-adic analysis. This was made effective by Coleman \cite{Coleman} through the development of Coleman integration; he gave a method to find $p$-adic power series that vanish on a superset of $X_\Q(\Q)$ for the curves Chabauty considered. This breakthrough is the starting point for the Chabauty--Kim program \cite{KimSelmerVarieties} of $p$-adic methods for proving the finiteness of $X_\Q(\Q)$ generalizing Chabauty and Coleman's method. The quadratic Chabauty method \cite{QCIntegral,QCI,QCII,EdixhovenLido,QCarakelov}  is an effective instance of the Chabauty--Kim method, first developed by Balakrishnan and Dogra, for studying the rational points of $X_\Q$.

Let $J_\Q$ be the Jacobian of $X_\Q$, with Mordell--Weil rank $r$ and N\'eron--Severi rank $\rho \colonequals \rk \NS(J_\Q) >1$. Let $p>2$ be a prime, not necessarily of good reduction for $X_\Q$. Quadratic Chabauty is an effective $p$-adic method for producing a finite set of $p$-adic points containing the rational points of $X_\Q$, when $r<g+ \rho - 1$. There are several approaches to the quadratic Chabauty method. The (original) cohomological quadratic Chabauty method \cite{QCI,QCII} studies $X_\Q(\Q)$ using $p$-adic height functions and works in certain Selmer varieties (for $p$ of good reduction). This method has been applied to determine the rational points on many modular curves \cite{RecentApproaches,examplesandalg}, including the cursed curve \cite{QCCartan}, a famously difficult problem. The geometric quadratic Chabauty method \cite{EdixhovenLido} is an algebro-geometric method for quadratic Chabauty, and the computations take place in $\G_m$-torsors over $J_\Q$.

In this paper, we give a comparison of the geometric and cohomological methods for quadratic Chabauty in the cases where both methods can be applied. We prove the following theorem.
\begin{intheorem}[Comparison Theorem (Theorem~\ref{thm:comparison})]\label{the:mainTheorem}
Assume that $p$ is a prime of good reduction for $X_{\Q}$. Assume that $r = g$, $\rho > 1$, and the $p$-adic closure $\overline{J_\Q(\Q)}$ is of finite index in $J_\Q(\Q_p) $. Assume $X_\Q(\Q) \neq \emptyset$, and let $b \in X_\Q(\Q)$ be a choice of a rational base point. Let $X(\Q_p)_{\Coh}$ be the finite set of $p$-adic points obtained under these assumptions with the cohomological quadratic Chabauty method (see Definition~\ref{def:qcset} and Remark~\ref{rem:definitionqcoh}). Let $X/\Z$ be a proper regular model of $X_\Q$.
Let $X(\Z_p)_{\Geo}$ be the finite set of $p$-adic points obtained with the geometric quadratic Chabauty method (see Definition~\ref{def:xzpgeo}). Then we have the inclusions
\[
X_\Q(\Q) \subseteq X(\Z_p)_{\Geo} \subseteq X(\Q_p)_{\Coh} \subseteq X_\Q(\Q_p),
\]
and we can explicitly characterize $X(\Q_p)_{\Coh} \setminus X(\Z_p)_{\Geo}$.
\end{intheorem}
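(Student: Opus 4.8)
The plan is to compare the two quadratic Chabauty constructions by tracking how each produces its finite set of $p$-adic points from the same input data $(X_\Q, p, b)$ and the same Mordell--Weil group $J_\Q(\Q)$, and to show that the geometric locus refines the cohomological one by keeping track of extra integrality at $p$. First I would recall precisely the definitions referenced in the statement: the cohomological set $X(\Q_p)_{\Coh}$ is the zero locus on $X_\Q(\Q_p)$ of a function of the form $h_p(z) - \text{(global height term)}$, where the global $p$-adic height decomposes as a sum of local heights $\sum_v h_v$ and the non-$p$ local heights $h_v$ for $v \neq p$ take values in a finite set as $z$ ranges over $X_\Q(\Q_p)$ (this is the standard Balakrishnan--Dogra setup, and finiteness of index of $\overline{J_\Q(\Q)}$ plus $r = g$ is exactly what makes the locus finite). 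The geometric set $X(\Z_p)_{\Geo}$ arises from Edixhoven--Lido as the intersection, inside a $\G_m^{\rho-1}$-torsor $T$ over $J_\Q$ (or rather its integral model), of the closure of a Mordell--Weil-type subgroup with the image of $X$ under an Abel--Jacobi-type section; after base change to $\Z_p$ this intersection is cut out by $p$-adic analytic equations on $X(\Z_p)$.

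The main technical step is to identify the two sets of defining equations on $X(\Q_p)$. I would do this by exhibiting a dictionary: the $\G_m$-torsor $T \to J_\Q$ used in the geometric method is, by the theory of biextensions and the Poincaré bundle, the geometric incarnation of the same Nekovář $p$-adic height pairing that underlies the cohomological method (this is the content of the earlier sections on $p$-adic heights in the paper, which I may assume). Concretely, a point of $T$ over $j \in J_\Q(\Q_p)$ together with a trivialization at the relevant places encodes the local height data, and the section $X \to T$ corresponds to the function $z \mapsto (\text{divisor of } (z)-(b)$ paired against a fixed class). Pulling back along this section, the geometric equation becomes: the global $p$-adic height (the $T$-coordinate, measured $p$-adically via the log on $\G_m(\Z_p)$, i.e. via $\log_p$) equals a bilinear expression in $\log$ of the Mordell--Weil generators. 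Matching this against $h_p(z) = \sum_v h_v$ shows the geometric equation is the cohomological equation together with the constraint that the torsor point is \emph{integral} at the non-$p$ places — equivalently, that every non-$p$ local height contribution $h_v(z)$ vanishes (or lies in a distinguished sub-lattice) rather than merely lying in the full finite set of possible values. This immediately yields $X(\Z_p)_{\Geo} \subseteq X(\Q_p)_{\Coh}$, and $X_\Q(\Q) \subseteq X(\Z_p)_{\Geo}$ holds because a genuine rational point is automatically integral in the proper regular model $X/\Z$, so all its non-$p$ torsor coordinates are units and the relevant $h_v$ vanish.

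For the explicit characterization of the difference $X(\Q_p)_{\Coh} \setminus X(\Z_p)_{\Geo}$, I would note that a point $z \in X(\Q_p)_{\Coh}$ fails to lie in $X(\Z_p)_{\Geo}$ precisely when the reduction of $z$ in the special fiber of $X/\Z$ forces a nonzero correction term from the non-$p$ places; since $z \in X(\Q_p)$ has good reduction at $p$ by hypothesis, the only obstruction is combinatorial, coming from the possible values of $\sum_{v \neq p} h_v$. Thus $X(\Q_p)_{\Coh}$ is a finite union over tuples $(c_v)_{v \neq p}$ (finitely many, by the standard local analysis on clusters/reduction graphs) of the solution sets of $h_p(z) = \rho(z) + \sum_v c_v$, and $X(\Z_p)_{\Geo}$ is the single member of this union corresponding to $c_v = 0$ for all $v$ (or more precisely the member that the geometric normalization of the torsor at each bad place picks out). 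Writing this out gives the characterization: $X(\Q_p)_{\Coh} \setminus X(\Z_p)_{\Geo}$ consists of those Chabauty solutions whose non-$p$ local height tuple is nonzero, and I would make the set of relevant tuples explicit in terms of the intersection multiplicities on the special fibers of $X/\Z$ at primes of bad reduction, exactly as in the local height computations recalled earlier in the paper.

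The hard part will be making the biextension/torsor-to-height dictionary precise enough that the two systems of $p$-adic analytic equations are literally identified rather than merely analogous — in particular, pinning down that the Edixhoven--Lido construction's choice of integral model and trivialization corresponds on the nose to Nekovář's (or the Coleman--Gross) splitting of the Hodge filtration used in the cohomological method, and controlling the constants and normalizations so the inclusion is an equality up to the explicitly described local discrepancy. Everything else is bookkeeping with finite sets of local heights.
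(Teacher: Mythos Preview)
Your overall strategy—establishing a dictionary between the Poincar\'e torsor and Nekov\'a\v{r}/Coleman--Gross heights, then comparing the resulting systems of $p$-adic equations—is exactly the paper's approach, and you are right that the hard part is making this dictionary precise (the paper does this via the map $\Psi$ to the trivial biextension $\calN$ and the function $\rho_\calN(D,E,x) = h(D,E) - x$). However, your characterization of the difference $X(\Q_p)_{\Coh} \setminus X(\Z_p)_{\Geo}$ is wrong, and the error reveals a misunderstanding of where the two methods actually diverge.

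You assert that $X(\Z_p)_{\Geo}$ is the single member of the union over tuples $(c_v)_{v \neq p}$ corresponding to $c_v = 0$, and that the difference consists of Chabauty solutions with a nonzero non-$p$ local height tuple. This is not correct: \emph{both} sets are defined as unions over the same finite collection of simple opens $U$ (equivalently, over the same finite set $\Omega_i$ of non-$p$ local height values $c_{U,i}$). The non-$p$ local heights are handled identically in the two methods; the geometric method does not single out $c_v = 0$.

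The actual discrepancy is purely at $p$. After the height dictionary, the cohomological equation $\sigma_i(z) = c_{U,i}$ becomes the condition that $\Psi(\widetilde{j_b}(z))$ lies in the zero locus $Z(\rho_\calN)$. Since the global height extends bilinearly to all of $J(\Q_p) \times J(\Q_p)$, this zero locus is in bijection with $J(\Q_p) \times J(\Q_p)$. The geometric condition, by contrast, requires $\widetilde{j_b}(z)$ to lie in the $p$-adic closure $\overline{T(\Z)}$, which forces $j_b(z) \in \overline{J(\Z)}$. Hence $z \in X(\Q_p)_{\Coh}$ fails to be in $X(\Z_p)_{\Geo}$ exactly when $j_b(z) \notin \overline{J(\Z)}$: either $z$ fails the Mordell--Weil sieve at $p$ (its reduction is not hit by $J(\Z)$), or the Mordell--Weil group has bad reduction at $p$ and $j_b(z)$ lies only in the $p$-saturation of $\overline{J(\Z)}$. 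Your argument, as written, would not reach this characterization because it looks for the refinement in the wrong place.
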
 

In \cite{geometricLinearChab}, it is shown that the classical Chabauty--Coleman method \cite{EffectiveChab} and the geometric linear Chabauty method \cite{PimThesis} are related by a similar comparison theorem.

The geometric quadratic Chabauty method studies the Poincar\'e torsor, the universal $\G_m$-biextension over $J_\Q \times J_\Q$.
By pulling back the Poincar\'e torsor by a nontrivial trace zero morphism $f\colon  J_\Q \to J_\Q$, we can construct a nontrivial torsor $T$ over the N\'eron model of $J_\Q$ whose restriction to $X_\Q$ is trivial. This allows us to embed $X_\Q$ into $T$ through a section. The idea of the geometric quadratic Chabauty method is to intersect the image of the integer points on a regular model of $X_\Q$ with the $p$-adic closure of the integer points $\overline{T(\Z)}$. This intersection contains $X_\Q(\Q)$. 

Suppose further that $p$ is a prime of good reduction for $X_\Q$. We give new algorithms for geometric quadratic Chabauty that work mainly in the trivial biextension $\Q_p^g \times \Q_p^g \times \Q_p$. Working on the trivial biextension translates the geometric quadratic Chabauty method into the language of Coleman--Gross heights \cite{ColemanGross} and Coleman integrals \cite{Coleman}. The main contribution of this paper is to explicitly give this translation into the language of heights and Coleman integrals. This translation allows us to prove the comparison theorem between the cohomological quadratic Chabauty method and the geometric quadratic Chabauty method.  We also give an algorithm to compute the local heights away from $p$ associated to the curve $X_\Q$. These heights are also studied in \cite{BettsDogra}.

We further leverage the language of $p$-adic heights to compute the embedding of $X_\Q$ into $T$ and the integer points $\overline{T(\Z)}$ as convergent power series. Then determining up to finite $p$-adic precision a finite set containing $X_\Q(\Q)$ reduces to solving simple polynomial equations.
Theoretically, by working modulo $p^k$ for large enough $k \in \N$, the geometric quadratic Chabauty method will always produce a finite set of $p$-adic points with precision $k$ containing $X_\Q(\Q)$. We describe algorithms for finding this finite set of $p$-adic points that are practical when $X_\Q$ is a hyperelliptic curve. Our \texttt{Magma} code implementing these algorithms can be found in \cite{DRHSgitrepo}.

Finally, we present an example of our new method applied to the modular curve $X_0(67)^+$ and a trace zero endomorphism $f$ arising from the Hecke operator $T_2$. Even though the rational points on this curve have already been determined \cite{RecentApproaches}, this provides a new way of analyzing the set of rational points. 

\section{Overview and Set-up}
\label{sec:overview}

We first set up some notation and give a broad overview of the geometric quadratic Chabauty method, then outline the contents of our paper.

Let $X_\Q$ be any smooth, projective, geometrically irreducible curve over $\Q$ with a proper regular model $X$ of $X_\Q$ over the integers and a fixed base point $b \in X_\Q(\Q) = X(\Z)$.  Let $X^\sm$ denote the open subscheme of $X$ consisting of points at which $X$ is smooth over $\Z$; then $X^\sm(\Z) = X(\Z)$.  Let $J_\Q$ denote the Jacobian of $X_\Q$ and $J$ denote the N\'eron model of $J_\Q$ over the integers. Suppose $J_\Q$ has Mordell--Weil rank $r$ and N\'eron--Severi rank $\rho=\rho(J_\Q)$. Let $p$ be a prime greater than $2$ not necessarily of good reduction for $X_\Q$.

The goal in geometric quadratic Chabauty is to lift $X$ into a non-trivial $\G_m^{\rho-1}$-torsor $T$ over $J$ through a section $\widetilde{j_b}$ lying over the Abel--Jacobi embedding $j_b\colon  X^\sm \to J$. Over $\Q$ we find this section $\widetilde{j_b}$ by giving a trivializing section of the $\G_m^{\rho -1}$-torsor $j_b^* T_\Q$ over $X_\Q$. If we want to spread this out over $\Z$, there is an obstruction coming from the multidegree.
\begin{definition}
The \defi{multidegree} of a line bundle $\calL$ on a curve $C$ with geometrically irreducible components $(C_i)_{i \in I}$ over $\Qbar$ is $(\deg \calL|_{C_i})_{i \in I}$.
\end{definition}
The map $\Pic(X) \to \Pic(X_\Q)$ is not in general an isomorphism, and $j_b^* T$ is not in general trivial over $X$ since its multidegree over the fibers $X_{\F_{\ell}}$ of $X$ might be non-zero. This is the only obstruction: the torsor can be trivialized over an open $U \subset X^\sm$ constructed by picking one geometrically irreducible component in each fiber $X_{\F_{\ell}}$ and removing the other irreducible components. If $U$ does not contain a $\F_q$-point of $X$, then it certainly contains no integer points, and hence we only consider $U$ that are everywhere locally soluble. We call these everywhere locally solvable fiberwise geometrically irreducible open $U \subset X^\sm$ \defi{simple open sets}.
By \cite[\href{https://stacks.math.columbia.edu/tag/04KV}{Tag 04KV}]{stacks-project} every irreducible component of $X_{\F_{\ell}}$ admitting a smooth $\F_\ell$-point is geometrically irreducible. Hence every point $P \in X^\sm(\Z)$ is contained in $U(\Z)$ for a unique simple open $U$. 
There is a finite number of simple open sets. For every such open, the map $\Pic(U) \to \Pic(X_\Q)$ is an isomorphism. We fix a simple open $U$, and obtain a trivialization $\widetilde{j_b}\colon  U \to T$ lying over $j_b$.

Because $\G_m(\Z) = \{\pm 1\}$ is finite, we can expect the closure of $T(\Z)$ inside the $(g+ \rho-1)$-dimensional $p$-adic manifold $T(\Z_p)$ to be of dimension at most $r$. The image of the $p$-adic points of $U$, namely $\widetilde{j_b}(U(\Z_p))$, is of dimension $1$. Given this $T$, we see the analogue of the classical Chabauty's theorem, that applies for curves satisfying the inequality $r<g$ \cite{Chabauty}.
\begin{theorem}\cite[Section~9.2]{EdixhovenLido}
When $r < g + \rho - 1$, the intersection 
\[\widetilde{j_b}(U(\Z_p)) \cap \overline{T(\Z)} \subset T(\Z_p)\] is finite.
\end{theorem}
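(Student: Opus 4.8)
The plan is to follow the template of the classical Chabauty argument, but carried out on the torsor $T$ rather than on $J$ itself. The key point is a dimension count: the ambient $p$-adic manifold $T(\Z_p)$ has dimension $g+\rho-1$, while $\widetilde{j_b}(U(\Z_p))$ is a $1$-dimensional analytic subvariety and $\overline{T(\Z)}$ will be shown to have dimension at most $r$. Since $1 + r < g+\rho-1$ by hypothesis, a generic intersection would be empty; to get finiteness we need to show the intersection is \emph{proper} in a suitable analytic sense, i.e.\ that these two analytic subsets cannot share a common component. The engine for this is the same as in Coleman's theorem: a nonzero convergent $p$-adic power series on a one-dimensional $p$-adic disc has only finitely many zeros, so once we produce a single analytic function on $U(\Z_p)$ that vanishes on the intersection but does not vanish identically, we are done on each residue disc, and then we sum over the finitely many residue discs covering $U(\Z_p)$.

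First I would analyze $\overline{T(\Z)}$. Because $T$ is a $\G_m^{\rho-1}$-torsor over $J$, there is an exact sequence relating $T(\Z)$, $J(\Z)$, and $\G_m(\Z)^{\rho-1} = \{\pm 1\}^{\rho-1}$; since the last group is finite, the closure $\overline{T(\Z)}$ maps to $\overline{J(\Z)} = \overline{J_\Q(\Q)}$ with finite fibers, and thus $\dim \overline{T(\Z)} = \dim \overline{J_\Q(\Q)} \le r$. (One uses here that $J(\Z)=J_\Q(\Q)$ by the N\'eron mapping property, and that the $p$-adic closure of a finitely generated group of rank $r$ inside a $p$-adic Lie group has dimension at most $r$.) So $\overline{T(\Z)}$ is contained in a countable union of $p$-adic analytic subvarieties of $T(\Z_p)$ of dimension at most $r$; more usefully, in each residue polydisc of $T(\Z_p)$ it is cut out by at least $(g+\rho-1)-r$ independent convergent power series — for instance the components of $\log_T$ annihilated by the Lie-algebra annihilator of $\overline{T(\Z)}$, pulling back the usual construction of Chabauty functionals.

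Next I would pull these functionals back along $\widetilde{j_b}\colon U(\Z_p)\to T(\Z_p)$. On each residue disc of $U(\Z_p)$ — of which there are finitely many since $U^\sm_{\F_p}$ is finite — the composite $\widetilde{j_b}$ is given by convergent power series in one variable (a local parameter), so each Chabauty functional pulls back to a one-variable convergent power series $\eta$ on the disc, vanishing on $\widetilde{j_b}(U(\Z_p))\cap\overline{T(\Z)}$. The finiteness then follows from the Strassmann/Weierstrass preparation bound \emph{provided $\eta\not\equiv 0$} on that disc. This non-vanishing is the crux of the argument and the main obstacle: it amounts to the statement that the image $j_b(X_\Q)$ (or rather its lift) is not contained in the translate of a proper ``sub-biextension'' corresponding to $\overline{T(\Z)}$. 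Following \cite{EdixhovenLido}, I would deduce it from the fact that $T$ was built to be a \emph{nontrivial} torsor whose restriction to $X_\Q$ is trivial: the trivializing section $\widetilde{j_b}$ is not constant in the $\G_m^{\rho-1}$-direction precisely because the corresponding class in $\Pic$ (equivalently, the height pairing / Poincar\'e-biextension class attached to $f$) is nonzero, since $f$ is a nonzero trace-zero endomorphism and $\rho>1$. Concretely, if some $\eta$ vanished identically on a disc, then by rigidity (a $p$-adic analytic subgroup argument, or unique continuation for the logarithm) the whole curve $X_\Q$ would map into a proper algebraic subgroup-coset of $T$, contradicting nontriviality of the torsor together with $r=\dim\overline{J_\Q(\Q)}$ being large enough that the relevant functional genuinely involves the $\G_m$-coordinate. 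I expect verifying this non-degeneracy carefully — tracking exactly which functionals survive and why at least one restricts nontrivially to the curve — to be the technical heart; the dimension count and the Strassmann bound are routine by comparison.
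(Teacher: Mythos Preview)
This theorem is not proved in the present paper: it is cited directly from \cite[Section~9.2]{EdixhovenLido} and used as a black box. So there is no ``paper's own proof'' to compare against; what follows is an assessment of your outline against the argument in Edixhoven--Lido.

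Your overall strategy is correct and is indeed the template of the Edixhoven--Lido proof: bound $\dim \overline{T(\Z)} \le r$ via $\G_m(\Z)=\{\pm 1\}$, parametrize each residue disk of $T(\Z_p)$ by $\Z_p^{g+\rho-1}$, and reduce to the vanishing of finitely many convergent power series in one variable. One small slip: the hypothesis $r<g+\rho-1$ gives $1+r \le g+\rho-1$, not $1+r < g+\rho-1$ as you wrote; in the boundary case the expected intersection is $0$-dimensional rather than empty, but that is exactly what you want.

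The non-vanishing step is, as you say, the crux, and your sketch of it has a gap. You propose that if a functional $\eta$ vanished identically on a disk then ``the whole curve $X_\Q$ would map into a proper algebraic subgroup-coset of $T$, contradicting nontriviality of the torsor.'' But recall that $j_b^*T$ is \emph{trivial} by construction --- that is how the section $\widetilde{j_b}$ exists --- so the image of the curve \emph{is} horizontal in the $\G_m^{\rho-1}$-direction, and ``nontriviality of the torsor'' alone does not immediately yield a contradiction. What actually fails is more subtle: the functional cutting out $\overline{T(\Z)}$ mixes the Jacobian coordinates with the $\G_m$-coordinate via the global $p$-adic height (cf.\ Proposition~\ref{prop:intpointscalN} in this paper), and one must show that this combination does not vanish identically along the curve. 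The Edixhoven--Lido argument does not attempt to produce an explicit nonvanishing functional; instead it proceeds by contradiction via Zariski closure: if the intersection were infinite in some residue disk, its formal completion would be positive-dimensional, and taking Zariski closure in $T$ produces an algebraic curve lying in a translate of a group subscheme of dimension $\le r < g+\rho-1$. One then argues separately according to whether this subscheme projects to a proper abelian subvariety of $J$ (impossible since $X$ generates $J$) or surjects onto $J$ (forcing a relation among the classes of the line bundles defining $T$, contradicting the independence of the trace-zero endomorphisms $f_i$). Your ``rigidity / unique continuation'' gesture points in this direction, but making it rigorous requires the passage through Zariski closure and the case analysis on the algebraic subgroup, which is where the genuine work lies.
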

\begin{definition}
\label{def:xzpgeo}
The \defi{geometric quadratic Chabauty set} $X(\Z_p)_{\Geo}$ is defined to be the union over the simple open sets $U$ of $ \widetilde{j_b}^*(\widetilde{j_b}(U(\Z_p)) \cap \overline{T(\Z)}) \subset U(\Z_p) \subset X(\Z_p)$.
\end{definition}
The geometric quadratic Chabauty method computes this finite set $X(\Z_p)_{\Geo}$, working in one simple open $U \subset X$ and one residue disk of $U(\Z_p)$ at a time. In Algorithm~\ref{alg:mainGQCalg} we give an algorithm to determine $\widetilde{j_b}(U(\Z_p)) \cap \overline{T(\Z)}$ to finite precision.

To construct the $\G_m^{\rho -1}$-torsor $T$ over $J$ we start with the universal $\G_m$-torsor. In our calculations this takes the form of the Poincar\'e torsor $\calM^\times$ over $J \times J^0$ (this is actually a pullback of the Poincar\'e torsor over $J \times J^{\dual 0}$; for more details see Section~\ref{sec:biextension}). Here $J^{\vee 0}$ is the fiberwise connected component of $J^\vee$ containing $0$. 
\begin{remark}
\label{rem:J0}
When $p$ is a prime of good reduction for $X$, we have $J^{ 0}_{\Z_{(p)}} = J_{\Z_{(p)}}$ and $J^{ \vee0}_{\Z_{(p)}} = J^\vee_{\Z_{(p)}}$.
\end{remark}
By the universality of $\calM^\times$, we want to construct $T$ by pulling back $\calM^\times$ along morphisms $(\id,\alpha_i)\colon  J \to J \times J^0$ for $i= 1, \dots, \rho -1$. Define 
\begin{align}
\label{eq:defm}
m \colonequals \lcm \{ \exp\left((J/J^0)(\overline{\F}_q)\right) \mid q \text{ prime}\},
\end{align} where $\exp(G) \in \N_{\geq 1}$ is the exponent of a finite group $G$. Note that $m\cdot \colon  J \to J^0$ is then a well defined morphism. Any morphism of schemes $J\to J$ can be written as a translation composed with an endomorphism, and hence we choose our morphisms $\alpha_i\colon  J \to J^0$ to be of the form $m\cdot \circ \tr_{c_i} \circ f_i$ with $c_i \in J(\Z)$ and $f_i\colon J\to J$ a morphism of group schemes. 

The torsor $T$ is the product $T = \prod_{i =1}^{\rho-1}(\id, \alpha_i)^* \calM^\times$ as a fiber product over $J$.  We also let $\calM^{\times, \rho-1}$ be the product taken as a fiber product over $J$ via the first projection map $\calM^\times\to J\times J^0\to J$. 
In order to embed $U$ through a section $\widetilde{j_b}\colon  U \to T$, the torsor $T$ pulled back to $U$ must be trivial: that is $j_b^* (\id, \alpha_i)^* \calM^\times$ must be trivial over $U$.
The torsor $(\id, \alpha_i)^* \calM^\times$ over $J$ can be thought of as the total space of a line bundle without its zero section, and the condition that its pullback $L_{\alpha_i} \colonequals j_b^*(\id, \alpha_i)^* \calM^\times$ to $U$ is trivial forces the corresponding line bundle to be degree $0$. Equivalently, the trace of $f_i$ must be $0$. The condition that $L_{\alpha_i}$ is trivial uniquely determines $c_i$.  
\begin{equation}
\begin{tikzcd}
                                                 & T \arrow[rr] \arrow[d]                                       &  & {\calM^{\times, \rho-1}} \arrow[d] \\
U \arrow[r, "j_b"] \arrow[ru, "\widetilde{j_b}"] & J \arrow[rr, "{(\id, m \cdot \circ \tr_{c_i} \circ f_i)_i}"] &  & J\times (J^{ 0})^{\rho-1}     
\end{tikzcd}
\end{equation}
Because the N\'eron--Severi rank of $J_\Q$ is $\rho$, the Jacobian  $J$ has $\rho-1$ independent non-trivial endomorphisms of trace zero. 

\begin{definition}
For $Y$ a scheme, $S$ a ring with residue field $\Spec \F_p \to \Spec S$ and $Q \in Y(\F_p)$, we define the  \defi{residue disk over $Q$}, denoted by  $Y(S)_Q \colonequals  \{ y \in  Y(S) \mid \overline{y} = Q \} $, to be the set of all $S$-points specializing to $Q$.
\end{definition}

Let  $\overline{P} \in U(\F_p)$.
The residue disk $U(\Z_p)_{\overline{P}}$ embeds into the residue disk $T(\Z_p)_{\widetilde{j_b}(\overline{P})}$ of $T$ through the section $\widetilde{j_b}$. Since $p > 2$, we have that $1$ and $-1$ reduce to different points modulo $p$ and hence the map $T(\Z)_{\widetilde{j_b}(\overline{P})} \to J(\Z)_{j_b(\overline{P})}$ is a bijection. By \cite[Proposition~2.3]{parent} and the fact that $p > 2$ the residue disk $J(\Z)_{j_b(\overline{P})}$ is up to a translation isomorphic to $\Z_p^r$. In \cite[Theorem~4.10]{EdixhovenLido} this bijection $T(\Z)_{\widetilde{j_b}(\overline{P})} \to J(\Z)_{j_b(\overline{P})}$ is upgraded to a morphism $\kappa\colon  \Z_p^r \to T(\Z_p)_{\widetilde{j_b}(\overline{P})}$ with image exactly $\overline{T(\Z)}_{ \widetilde{j_b}(\overline{P}) }$. 

In this paper we make the geometric quadratic Chabauty method explicit in the case where $p$ is of good reduction by giving algorithms to compute $\widetilde{j_b}$ and $\kappa$ in a residue disk as polynomials in parameters up to finite precision. This translates the geometric Chabauty method into solving simple polynomial equations. We also give algorithms to work in residue disks of $T$ explicitly using $p$-adic heights and Coleman integrals. Moreover, by writing the geometric quadratic Chabauty method in terms of $p$-adic heights and Coleman integrals, we are able to prove Theorem~\ref{the:mainTheorem}.

\subsection{Structure of the paper}

In Section~\ref{sec:biextension} we provide background on the Poincar\'e torsor and its realizations. We solve the problem of how to efficiently represent elements of a residue disk of $T$. We show how to represent elements of the Poincar\'e torsor $\calM^\times$ using the following statement that appears in \cite[Section~9.3]{EdixhovenLido}.
\begin{proposition}
Let $p>2$ be a prime of good reduction for $X$. There is a morphism of biextensions over $J(\Z_p) \times J(\Z_p)$
\begin{align}
\Psi\colon  \calM^\times(\Z_p) \to J(\Z_p) \times J(\Z_p) \times \Q_p,
\end{align}
with the trivial $\Q_p$-biextension structure on the latter product. 
\end{proposition}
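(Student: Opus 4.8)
The plan is to construct $\Psi$ by hand from the concrete model of $\calM^\times$ recalled in Section~\ref{sec:biextension}, using $p$-adic (Coleman) integration, and then to verify the biextension axioms by direct computation. Since $p$ has good reduction, Remark~\ref{rem:J0} gives $J^0_{\Z_p}=J_{\Z_p}$ (and likewise for $J^\vee$), so $\calM^\times(\Z_p)$ is a $\G_m(\Z_p)$-torsor over $J(\Z_p)\times J(\Z_p)$ and a $\Z_p$-point lies over a $\Z_p$-point of $J\times J$; I would in fact define $\Psi$ on all of $\calM^\times(\Q_p)$ and then restrict. Recall from Section~\ref{sec:biextension} that a point of $\calM^\times$ over $([D],[E])$ can be described, after choosing degree-zero divisors $D,E$ with disjoint support representing the two classes, by a scalar $c\in\Q_p^\times$, normalized so that replacing $D$ by $D+\divv f$ rescales $c$ by $f(E)^{-1}\colonequals\prod_{Q}f(Q)^{-n_Q}$ (with $E=\sum_Q n_Q (Q)$), replacing $E$ by $E+\divv g$ rescales $c$ by $g(D)^{-1}$ (these being compatible by Weil reciprocity), and such that the two partial composition laws of the biextension multiply the scalars $c$ while keeping $E$, respectively $D$, fixed.

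Fix the cyclotomic $p$-adic logarithm $\log_p\colon\Q_p^\times\to\Q_p$ and a subspace $W\subset\HH^1_{\dR}(X_{\Q_p})$ complementary to $\HH^0(X_{\Q_p},\Omega^1)$ and isotropic for the cup product. For $D\in\Div^0(X_{\Q_p})$ let $\eta_D$ be the differential of the third kind with residue divisor $D$ whose de Rham class lies in $W$; then $D\mapsto\eta_D$ is linear and $\eta_{\divv f}=\d f/f$. For $E=\sum_Q n_Q(Q)\in\Div^0$ with support disjoint from $\supp D$ write $\int_E\eta_D\colonequals\sum_Q n_Q\int_b^Q\eta_D$ (a Coleman integral, independent of $b$ since $\deg E=0$, and convergent because $p$ has good reduction); this is the Coleman--Gross local height $h_p(D,E)$ of \cite{ColemanGross}. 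I then set
\[
\Psi\colon\ \bigl(c \text{ over } ([D],[E])\bigr)\ \longmapsto\ \bigl([D],\,[E],\,\log_p c + \int_E\eta_D\bigr).
\]

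First I would check independence of the representatives. For $D\mapsto D+\divv f$: since $\eta_{\divv f}=\d f/f$ is exact, $\int_E\eta_{\divv f}=\sum_Q n_Q(\log_p f(Q)-\log_p f(b))=\log_p f(E)$ (using $\deg E=0$), which cancels the change $\log_p c\mapsto\log_p c-\log_p f(E)$. For $E\mapsto E+\divv g$: the integral changes by $h_p(D,\divv g)$, and by symmetry of the Coleman--Gross height — this is exactly where isotropy of $W$ enters — this equals $h_p(\divv g,D)=\int_D \d g/g=\log_p g(D)$, cancelling the change $\log_p c\mapsto\log_p c-\log_p g(D)$. For the biextension axioms: $\Psi$ visibly covers the identity of $J(\Q_p)\times J(\Q_p)$; the $\G_m(\Q_p)$-action multiplies $c$ by $u$, so $\Psi$ intertwines it with translation by $\log_p u$, i.e.\ $\Psi$ is a morphism of torsors lying over the homomorphism $\log_p\colon\G_m(\Q_p)\to\Q_p$. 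For the first composition law, composing $c_1$ over $([D_1],[E])$ with $c_2$ over $([D_2],[E])$ yields $c_1c_2$ over $([D_1+D_2],[E])$, and linearity of $D\mapsto\eta_D$ together with additivity of the Coleman integral give $\log_p(c_1c_2)+\int_E\eta_{D_1+D_2}=(\log_p c_1+\int_E\eta_{D_1})+(\log_p c_2+\int_E\eta_{D_2})$; the second law is identical, using additivity of $E\mapsto\int_E\eta_D$, and the interchange compatibility holds since both composites are bilinear expressions in $\Q_p$. Finally, restricting to $\calM^\times(\Z_p)$ the first two coordinates land in $J(\Z_p)$, so we obtain the asserted morphism.

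The step I expect to be the main obstacle is the well-definedness check, i.e.\ reconciling the combinatorics of the explicit model of $\calM^\times$ from Section~\ref{sec:biextension} (the rescaling cocycles $f(E)^{-1}$, $g(D)^{-1}$; the role of the base point; the choice of linearly equivalent representatives with disjoint support, including a compatible choice of integral models over $\Z_p$) with the reciprocity properties of differentials of the third kind. Everything ultimately rests on Weil reciprocity together with the symmetry $h_p(D,E)=h_p(E,D)$, which is precisely what forces $W$ to be isotropic. I would also record that $\Psi$ depends on the choices of $\log_p$ and of $W$, but any such choice yields a morphism of biextensions, which is all the Proposition asserts; this recovers the map of \cite[Section~9.3]{EdixhovenLido}.
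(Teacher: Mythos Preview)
Your proposal is correct and follows essentially the same route as the paper's proof of Proposition~\ref{prop:calN}: the map is defined by $\psi(E^*\lambda)=\log\lambda+h_p(D,E)$, well-definedness is checked via $h_p(\divv f,E)=\log f(E)$ together with the symmetry of the Coleman--Gross height (which is where isotropy of $W$ enters), and compatibility with the two partial group laws follows from biadditivity of $h_p$ and additivity of $\log$. Your write-up is somewhat more explicit---you unfold $h_p(D,E)$ as $\int_E\eta_D$ and spell out the torsor equivariance under $\log_p\colon\G_m(\Q_p)\to\Q_p$---but the argument is structurally identical, and the ``main obstacle'' you flag is exactly the well-definedness check the paper carries out.
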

By Remark~\ref{rem:J0}, we have that $J^0(\Z_p)=J(\Z_p)$.
This proposition allows us to record elements of $\calM^\times(\Z_p)$ up to $p$-adic finite precision. 
In Proposition~\ref{prop:intpointscalN} we describe the image of integer points of $T$ in this trivial biextension $\calN \colonequals J(\Z_p) \times J(\Z_p) \times \Q_p$. 

Since we can construct a bijection from residue disks of $J(\Z_p)$ to $\Z_p^g$ using Coleman integrals, we can explicitly write down a homeomorphism from the residue disk $T(\Z_p)_{\widetilde{j_b}(\overline{P})}$ to $\Z_p^g \times \Q_p^{\rho - 1}$ factoring through $\Psi$; this is done in Corollary~\ref{lem:paramT}. Crucially, we prove that this homeomorphism is given by convergent power series on $\Z_p^{g+ \rho - 1}$, i.e. power series that modulo every power of $p$ are given by polynomials. 

Then in Section~\ref{sec:linebundle} we give an algorithm to construct the unique line bundle associated to the endomorphism $f$ from a divisor in $U \times X$ satisfying certain properties described in Lemma~\ref{lemma:A-alpha}. Using this line bundle we write down a theoretical formula for the trivializing section $\widetilde{j_b}\colon  U \to T$. 
We give an algorithm for computing the convergent power series describing the embedding of a residue disk of the curve into the biextension $\calN$ in Section~\ref{sec:embeddingcurve}. In Section~\ref{sec:integerpoints} we give formulas for computing points in the biextension $\calN$ that are the image of generating sections of certain residue disks of $\calM$.

In Section~\ref{sec:upperbound} we tie everything together with the algorithm for geometric quadratic Chabauty in a residue disk $U(\Z)_{\overline{P}}$. In this section, we also describe how to compute a finite set of $p$-adic points to finite precision containing the integer points in a single residue disk $U(\Z)_{\overline{P}}$. We do this by reducing our computations to $T(\Z/p^k\Z)_{\widetilde{j_b}(\overline{P})}$ and using a Hensel-like lemma \cite[Theorem~4.12]{EdixhovenLido}. By iterating over residue disks we find $X(\Z_p)_{\Geo}$ up to finite precision. 

The comparison theorem appears in Section~\ref{sec:comparison}. Theorem~\ref{thm:comparison} states that the finite set of points found by the cohomological quadratic Chabauty method is a superset of the points found by the geometric method, and gives an explicit description of the points in their difference.

Section~\ref{sec:example} shows a worked example of the algorithms applied to the case of $X_0(67)^+$. The rational points on this curve have been determined previously \cite{RecentApproaches}, but the computations here demonstrate the practicality of the geometric quadratic Chabauty algorithms presented here for hyperelliptic modular curves.

\section{Understanding the biextension and \texorpdfstring{$T$}{T}}
\label{sec:biextension}

A crucial object of study in our paper is the Poincar\'e torsor. This has four incarnations, which we introduce in the following four subsections. 
Sections \ref{sec:poincaretorsor} and \ref{subsec:M} are expository sections and introduce important background from \cite{EdixhovenLido}. Section~\ref{subsec:N} introduces the trivial biextension, and contains new propositions relating the biextension to $p$-adic heights. Section~\ref{subsec:Tf} introduces the pseudoparametrization of the torsor that we work with for the rest of the paper, and proves that the pseudoparametrization is given by nice convergent power series modulo powers of $p$.

\subsection{The Poincar\'e torsor \texorpdfstring{$\calP$}{P}}
\label{sec:poincaretorsor}

First we introduce the Poincar\'e torsor $\calP^\times_\Q$ over $J_\Q \times J^\dual_\Q$, its biextension structure, and the torsor $\calP^\times$ over the integers. For more details on the Poincar\'e torsor and biextensions, see \cite[\S I.2.5]{MoretBailly} or Grothendieck's Expos\'es  VII and VIII \cite{SGA7-I}.
The abelian variety $J_\Q^\dual$ is a moduli space for line bundles algebraically equivalent to zero on $J_\Q$; every $[c] \in J_\Q^\vee$ corresponds to a line bundle $\calL_c$ on $J_\Q$. The universal line bundle over $J_\Q \times J_\Q^\dual$ is the \defi{Poincar\'e bundle} $\calP_\Q$. It satisfies the property that $\calP_{\Q}|_{J_\Q \times {[c]}} \simeq \calL_c$ and it is rigidified at $0$, i.e.  $\calP_\Q|_{ {0} \times J^\dual}$ is trivial. Furthermore, under the natural identification $(J_\Q^\vee)^\vee = J_\Q$, this line bundle is also the universal line bundle over $J_\Q \times J_\Q^\dual$ parametrizing line bundles on $J_\Q^\dual$.

Given a line bundle $\calL$ over a scheme $S$, there is an associated $\G_m$-torsor $\calL^\times$ defined by taking the sheaf of non-vanishing sections, and similarly given a $\G_m$-torsor $Y$ there is an associated line bundle $Y \tensor_{\calO_{S}^\times} \calO_S$. Applying these associations to the Poincar\'e bundle, we obtain the universal $\G_m$-torsor $\calP_\Q^\times$ over $J_\Q \times J_\Q^\dual$, called the \defi{Poincar\'e torsor}. Alternatively, 
\begin{align*}
\calP_\Q^\times = \mathrm{Isom}_{J_\Q \times J_\Q^\dual}(\calO_{J_\Q \times J_\Q^\dual},\calP_\Q),
\end{align*}
i.e. for a scheme $S/(J_\Q \times J_\Q^\dual)$ we have that $\calP_\Q^\times(S)$ consists of isomorphisms of line bundles $\calO_S \to (\calP_\Q)_S$. This set $\calP_\Q^\times(S)$ is an $\calO_S(S)^\times$-pseudotorsor: either empty or an $\calO_S(S)^\times$-torsor.

The Poincar\'e torsor $\calP_\Q^\times$ has the structure of a \defi{biextension} over $J_\Q \times J_\Q^\dual$, as we will now explain. Addition in $J_\Q^\dual$ corresponds to tensoring line bundles on $J_\Q$. This, along with the theorem of the square, induces a partial group law on $\calP_\Q^\times$. Let $S$ be a scheme over $\Q$. For $x \in J_\Q(S)$ and $y_1,y_2 \in J_\Q^\dual(S)$ we have a tensor product which is an isomorphism of $\G_m$-torsors
\begin{align*}
(x,y_1)^* \calP_\Q^\times \tensor (x,y_2)^* \calP_\Q^\times \to (x,y_1 + y_2)^* \calP_\Q^\times
\end{align*}
that we denote by $\tensor_2$, because we are adding on the second coordinate (while the first coordinate stays fixed). Similarly since $(J_\Q^\dual)^\dual$ is canonically identified with $J_\Q$, we also have the tensor product
\begin{align*}
 (x_1,y)^* \calP_\Q^\times \tensor (x_2,y)^* \calP_\Q^\times \to (x_1 + x_2,y)^* \calP_\Q^\times
\end{align*}
  called $\tensor_1$.
These two partial group laws are compatible.  Let $x_1,x_2 \in J_\Q(S)$,  $y_1,y_2 \in J_\Q^\dual(S)$, and $z_{ij} \in (x_i,y_j)^*\calP_\Q^\times(S)$, for $i,j \in \{1,2\}$. Then 
\begin{align*}
(z_{11} \tensor_2 z_{12}) \tensor_1 (z_{21} \tensor_2 z_{22}) = (z_{11} \tensor_1 z_{21}) \tensor_2 (z_{12} \tensor_1 z_{22}).
\end{align*}
 In other words, tensoring points in the biextension is not order-dependent. The structure of these two partial group laws over the product $J_\Q \times J_\Q^\dual$, together with this compatibility, makes $\calP_\Q^\times$ a $\G_m$-biextension over $J_\Q \times J_\Q^\dual$.

For our applications, we need to work over the integers. Let $J^0$ be the fiberwise connected component of $J$ containing $0$. 
This represents line bundles on $C$ that are fiberwise of multidegree $0$.  Let $J^\dual$ be the N\'eron model of $J_\Q^\dual$ and similarly let $J^{\dual 0}$ be the fiberwise connected component of $J^\dual$ containing $0$. 
The Poincar\'e torsor extends to a biextension $\calP^\times$ over $J \times J^{\dual 0}$. In particular, the integer points of $\calP^\times$ lying over $(x,y) \in (J \times J^{\dual 0})(\Z)$ form a $\G_m(\Z)$-torsor, i.e. a $\{\pm 1\}$-torsor.  So there is exactly one integer point lying over $(x,y)$, up to sign.

\subsection{The biextension \texorpdfstring{$\calM$}{M}}
\label{subsec:M}
To work with explicit computations of points in the Poincar\'e torsor in practice, we need a few modifications of $\calP^\times$. We introduce two torsors over $J \times J^0$, $\calM^\times$ and $\calN$ the trivial biextension.

We first discuss the construction of $\calM^\times$ and the generating sections of its residue disks.
The Abel--Jacobi embedding induces an isomorphism $j_b^*\colon  J^{\dual} \to J$ and hence an isomorphism $j_b^*\colon  J^{\dual 0} \to J^0$.
We define
\begin{align}
\calM^\times \colonequals (\id,j_b^{*,-1})^* \calP^\times .
\end{align}
For the torsor $\calM^\times$, we have an explicit description of the fibers. Let $S$ be a scheme, $x \in J(S)$ be a point corresponding to a line bundle $\calL$, and $y \in J^0(S)$ be a point with representing divisor $E = E^+ - E^-$ such that $E^+$ and $E^-$ are effective and of the same multidegree. We denote the fiber $(x,y)^*\calM^\times$ of $\calM^\times$ over $(x,y)\in (J\times J^0)(S)$ by $\calM^\times(x,y)$. This fiber $\calM^\times(x,y)$ is the $\G_m$-torsor
\begin{align}
\label{eqn:fiber}
E^* \calL^\times \colonequals \Norm_{E^+/S} \big(\calL^\times\mid_{E^+}\big) \tensor \Norm_{E^-/S} \big(\calL^\times\mid_{E^-}\big)^{-1},
\end{align}
which we also denote by $\Norm_{E/S} \calL^\times$. When $S = \Spec \Z$ we also write simply $\Norm_{E} \calL^\times$.   This fiber can be thought of as the aggregate of how $\calL$ looks around $E$.

This description of the fiber is proven in \cite[Proposition~6.8.7]{EdixhovenLido} and more general facts about these norms can be found in \cite[Section~6]{EdixhovenLido}. Because equation \eqref{eqn:fiber} may seem a bit opaque, we provide some examples of how to apply the formula in practice.

\begin{definition}
Let $S$ be a scheme. Let $D$ and $E$ be two relative Cartier divisors on $X_S/S$. We say $D$ and $E$ are \defi{disjoint over $S$} if their support is disjoint as closed subschemes of $X_S$. In particular, it is not enough to have disjoint $S$-points if $D$ or $E$ does not split completely over $S$.
\end{definition}

\begin{example}
\label{ex:disjointrep}
Let $S$ be a scheme, $[D] \in J(S),$ and $[E] \in J^0(S)$ be points of $J$ and $J^0$ with representing divisors $D$ and $E$ where $E$ has multidegree $0$. Assume $D$ and $E$ are disjoint over $S$, and write $E = E^+ - E^-$ with $E^+,E^-$ effective. Then the $\G_m$-torsor $E^* \calO_X(D)^\times$ is generated by $\Norm_{E^+/S}(1) \tensor \Norm_{E^-/S}(1)^{-1}$ where $1$ is here seen as a section of $\calO_X(D)^\times|_{E^{\pm 1}}$. We also denote this generator by $E^* 1$. 
\end{example}

\begin{example}
Suppose the fiber of $X^{\sm}/\Z$ over $2$ is geometrically irreducible.
Let $[D] \in J(\Z)$ and $[E] \in J^0(\Z)$ be points of $J$ and $J^0$ with representing divisors $D$ and $E$. Assume $D$ and $E$ are disjoint over $\Z[\frac12]$ and meet with multiplicity $1$ over $2$. Then $E^* \calO_X(D)^\times$ is generated by $2^{-1}E^*1$. 
\end{example}

\begin{remark}
\label{rem:divGgen}
Let $S$ be a scheme.
If $D = \div g \in \Div^0(X_S/S)$ is the principal divisor of a rational function $g$ and is disjoint from $E \in \Div^0(X_S/S)$, then the isomorphism  $\calO_X(D) \to \calO_X$ given by multiplication by $g$ induces an isomorphism $E^* \calO_X(D)^\times \to E^* \calO_X^\times$ sending $E^* 1$ to $E^* g(E)$ where $g(E) \in \G_m(S)$.
\end{remark}

\begin{remark}
In general, if $[D] \in J(\Z)$, $[E] \in J^0(\Z)$, and we have a choice of representing divisors $D$ and $E$ that are disjoint over $\Q$, using intersection theory we can determine $n \in \Q^{\times}$ 
 unique up to sign, such that $\Norm_E \calO_X(D)^\times$ is generated by $ n \cdot E^* 1$. 
  If $E$ is not of multidegree $0$, there is a unique vertical divisor $V \subset C$ with $V+E$ of multidegree $0$. In this case, one can compute the unique rational number $a$ up to sign such that $(E+V)^* \calO_X(D)^\times = a \Norm_{E} \calO_X(D)^\times$. This is treated in detail in \cite[Section~6.9]{EdixhovenLido}.
\end{remark}

The partial group laws on $\calM^\times$ are also very explicit: let $[E], [E_1], [E_2] \in J^0(S)$ and $\calL, \calL_1, \calL_2 \in J(S)$.  They are given by the morphisms 
\begin{align}
\label{eqn:tensor2onM}
E_1^* \calL^\times \tensor E_2^* \calL^\times \to (E_1+E_2)^* \calL^\times
\end{align} corresponding to $\otimes_2$ and  
\begin{align}
\label{eqn:tensor1onM}
E^* \calL_1^\times \tensor E^* \calL_2^\times \to E^* (\calL_1 \tensor \calL_2)^\times\end{align}  corresponding to $\otimes_1$.

\begin{example}
\label{example:addpointsM}
Let $x_1,x_2 \in J(\Z)$ and $y_1,y_2 \in J^0(\Z)$. Let $z_{ij} \in \calM^\times(\Z)$ be points above $(x_i,y_j)$ for $i\in\{1,2\}$. Then for $n_1,n_2,m_1,m_2\in\Z$ we can construct points above $(n_1 x_1 + n_2 x_2, m_1 y_1 + m_2 y_2)$ by the formula
\begin{align*}
\left(z_{11}^{\tensor_2 m_1} \tensor_2 z_{12}^{\tensor_2 m_2}\right)^{\tensor_1 n_1} \tensor_1 \left(z_{21}^{\tensor_2 m_1} \tensor_2 z_{22}^{\tensor_2 m_2}\right)^{\tensor_1 n_2}.
\end{align*}
\end{example}

This allows us to construct many integer points of $\calM^\times$ by starting with a few points that lie over generators of the Jacobian and then applying the partial group laws. In Section~\ref{sec:integerpoints} we will use this idea to determine the integer points of the torsor $T$ landing in a specific residue disk of $T$.

\subsection{The trivial biextension \texorpdfstring{$\calN$}{N}}
\label{subsec:N}
In practice, we will often translate between $\calM$ and the trivial biextension $\calN$ where we do our computations. We explain how to make this translation following \cite[Section~9.3]{EdixhovenLido}. From now on, we assume $p>2$ is a prime of good reduction for $X_\Q$.

Let $[D] \in J(\Q_p)$ and $[E] \in J^0(\Q_p)$ be divisor classes with a choice of representing divisors $D$ and $E$ that are disjoint over $\Q_p$. Then $E^* \calO_X(D)^\times$ is a $\Q_p^\times$-torsor, trivial with generator $E^* 1$ by Example~\ref{ex:disjointrep}. Let $h_p$ be the cyclotomic Coleman--Gross local height at $p$ with respect to an isotropic splitting  $H^1_{\dR}(X) = H^0(X,\Omega_X^1) \oplus W$ of the Hodge filtration \cite[Section~5]{ColemanGross}. Choose a branch of the logarithm with $\log p = 0$ so that it is compatible with $h_p$. The height $h_p$ is a biadditive, symmetric pairing on disjoint divisors of degree $0$, taking values in $\Q_p$. For $f$ a rational function and $\Div f$ its associated divisor, it also satisfies the equality $h_p(D, \Div f) = \log f(D)$. 

\begin{remark}
The assumption that $p$ is a prime of good reduction for $X$ is used to define the logarithm of $J_{\Z_p}$, and to compute the Coleman--Gross height and iterated Coleman integrals. There is a more general construction using Vologodsky integrals to construct the Coleman--Gross height \cite{BesserHeightsVologodsky}, but currently there is no known way to compute this more general height for a prime of bad reduction. 
\end{remark}

We define a map
\begin{align}
\label{def:calN}
\psi\colon  \calM^\times(\Z_p) &\to \Q_p \\\notag
E^* \lambda \in E^* \calO_X(D)^\times &\mapsto \log \lambda + h_p(D,E).
\end{align} 
We define $\calN$ to be the trivial $\Q_p$-biextension $J(\Q_p) \times J(\Q_p) \times \Q_p$ over $J(\Q_p) \times J(\Q_p)$. By definition, the partial group laws in $\calN$ are just addition keeping one coordinate fixed. Let $[D],[D_1],[D_2]\in J(\Q_p)$ and $[E],[E_1],[E_2] \in J^0(\Q_p)$ and $v_1,v_2 \in \Q_p$. The first group law is
\begin{align*}
&([D_1], [E], v_1) +_1 ([D_2], [E],v_2) =([D_1]+ [D_2], [E], v_1 + v_2 ).
\end{align*}
The second group law is
\begin{align*}
&([D], [E_1], v_1) +_2 ([D], [E_2],v_2) =([D], [E_1]+[E_2], v_1 + v_2 ).
\end{align*}
\begin{definition}
\label{def:Psi}
We define the morphism of biextensions
\begin{align*}
\Psi\colon  \calM^\times(\Z_p) \to \calN
\end{align*}
to be the projection $\calM^\times(\Z_p) \to J(\Q_p) \times J(\Q_p)$ on the first two factors and $\psi$ on the last factor. 
\end{definition}

\begin{remark}
 Since $\log (-1) = 0$,  the morphism $\Psi$ sends the two integer points of $\calM^\times(\Z)$ above a fixed integer point of $J \times J^0$ to the same point. 
\end{remark}

The following proposition appears in \cite[Section~9.3]{EdixhovenLido} but is not proven.
\begin{proposition}
\label{prop:calN}
The map $\Psi\colon  \calM^\times(\Z_p) \to \calN$ is a morphism of biextensions.
\end{proposition}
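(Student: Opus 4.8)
The plan is to verify that $\Psi$ is compatible with both partial group laws $\otimes_1$ and $\otimes_2$ on $\calM^\times(\Z_p)$, since it is by construction compatible with the projection to $J(\Q_p) \times J(\Q_p)$ and sends the trivial biextension structure to the trivial one. Because $\Psi$ is the identity on the first two coordinates and $\psi$ on the last, it suffices to check for each group law that the last coordinate of the output transforms additively, i.e. that $\psi$ is ``biadditive'' in the appropriate sense relative to the partial tensor products \eqref{eqn:tensor2onM} and \eqref{eqn:tensor1onM}. I would treat the two group laws separately but by the same mechanism: unwind the definition of the tensor product on $\calM^\times$ in terms of the norm description \eqref{eqn:fiber} of the fibers, then unwind $\psi$ via its defining formula $E^* \lambda \mapsto \log \lambda + h_p(D,E)$, and finally match the two sides using the bilinearity and the functional equation of the Coleman--Gross height $h_p$.

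For the second group law $\otimes_2$: take $[D] \in J(\Q_p)$ with representing divisor $D$ (and associated line bundle $\calL = \calO_X(D)$), and $[E_1],[E_2] \in J^0(\Q_p)$ with representatives $E_1, E_2$ both disjoint from $D$ over $\Q_p$. Choosing the generators $E_i^* 1$ of $E_i^*\calL^\times$, the map \eqref{eqn:tensor2onM} sends $E_1^* 1 \otimes E_2^* 1$ to $(E_1+E_2)^* 1$ (the norm is multiplicative in the divisor, so the canonical section $1$ maps to the canonical section $1$). A general pair of points is $E_1^*\lambda_1 \otimes E_2^*\lambda_2$ with $\lambda_i \in \Q_p^\times$, which maps to $(E_1+E_2)^*(\lambda_1\lambda_2)$. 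Applying $\psi$: the left side contributes $\log\lambda_1 + h_p(D,E_1) + \log\lambda_2 + h_p(D,E_2)$, and the right side $\log(\lambda_1\lambda_2) + h_p(D, E_1+E_2)$; these agree by additivity of $\log$ and biadditivity of $h_p$ in its second argument. The first group law $\otimes_1$ is symmetric: fix $[E] \in J^0(\Q_p)$ and vary $[D_1],[D_2] \in J(\Q_p)$ with $\calL_i = \calO_X(D_i)$, use the canonical identification $\calO_X(D_1) \otimes \calO_X(D_2) \cong \calO_X(D_1+D_2)$ under which \eqref{eqn:tensor1onM} again sends the product of canonical sections to the canonical section, and conclude by biadditivity of $h_p$ in its first argument.

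The technical point requiring care — and the step I expect to be the main obstacle — is the bookkeeping of \emph{which} generator one uses and how $\psi$ behaves under changing it, together with the fact that a single divisor class may need different representing divisors to be simultaneously disjoint from the relevant other divisors. Concretely, when forming $E_1 + E_2$ one must ensure the sum is still a divisor disjoint from $D$ (or move to a linearly equivalent representative), and then invoke Remark~\ref{rem:divGgen}: changing $E$ by $\Div g$ rescales the generator $E^* 1$ by $g(E) \in \G_m$, while simultaneously $h_p(D, \Div g) = \log g(D)$, and one needs these two changes to cancel in the value of $\psi$ — this is exactly where the functional equation $h_p(D,\Div f) = \log f(D)$ and Weil reciprocity ($g(D) = g(\Div(\text{something}))$ vs. $f(E)$ symmetry) enter. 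I would isolate this as a preliminary lemma: \emph{$\psi(E^*\lambda)$ depends only on the point $E^*\lambda \in \calM^\times(\Z_p)$ and not on the choice of representatives $D, E$ nor on writing the element as $E^*\lambda$ for a particular disjoint $E$}, proved using biadditivity and the functional equation of $h_p$ plus Weil reciprocity. Granting this well-definedness, the two compatibilities above become the short bilinearity computations sketched, and the proposition follows.
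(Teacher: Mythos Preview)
Your proposal is correct and follows essentially the same approach as the paper: first establish well-definedness of $\psi$ under change of representing divisors via the functional equation $h_p(D,\Div g) = \log g(D)$ and Remark~\ref{rem:divGgen}, then verify compatibility with each partial group law by the biadditivity of $h_p$. The only minor difference is that where you invoke Weil reciprocity to handle the symmetry between the two arguments, the paper instead appeals to ``symmetry of the norm'' from \cite[Section~6.5]{EdixhovenLido} to reduce both the $E$-independence and the $\otimes_2$-compatibility to the cases already done; also, your worry about $E_1+E_2$ failing to be disjoint from $D$ is unnecessary, since disjointness of supports is preserved under addition of divisors.
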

\begin{proof}
First we show that $\Psi$ is well defined. For divisor classes $[D] \in J(\Q_p)$ and $[E] \in J^0(\Q_p)$ we can always choose representing divisors $D$ and $E$ with disjoint support over $\Q_p$; we show that the choice of representing divisors $D$ and $E$ does not matter.
 Suppose $D  = D' + \Div g$ for some rational function $g$ with $\div g$ disjoint from $E$. Multiplication by $g$ induces an isomorphism $\calO_X(D) \to \calO_X(D')$ sending $E^* 1 \mapsto E^* g(E)$ by Remark~\ref{rem:divGgen}. Under $\psi$, the section $E^* \lambda$ in $E^* \calO_X(D)$ maps to $\log \lambda + h_p(D, E)$ while $E^* g(E) \lambda$ in $E^* \calO_X(D')$ maps to $\log \lambda + \log g(E) + h_p (D', E)$. But since $h_p(\Div g, E) = \log g(E)$ we have the equality $h_p(D', E)  + \log g(E) = h_p (D, E)$, so the choice of representing divisor for $[D]$ does not change the value of $\Psi$. By symmetry of the norm \cite[Section~6.5]{EdixhovenLido}, we can also conclude that $\Psi$ does not depend on the choice of representing divisor for $[E]$.

Finally we show that $\Psi$ preserves the two group laws \eqref{eqn:tensor2onM} and \eqref{eqn:tensor1onM}. Let $[D_1],[D_2]\in J(\Q_p)$, and $[E]  \in J^0(\Q_p)$ with $E$ disjoint from $D_1$ and $D_2$. Let $E^* \lambda_1 \in E^*\calO_X(D_1)$ and $E^*\lambda_2 \in E^*\calO_X(D_2)$. Under $\psi$, the section $E^*\lambda_i$ maps to $\log \lambda_i + h_p(D_i, E)$ for $i = 1, 2$. The group law $\otimes_1$ in $\calM^\times$ sends the sections to $E^*(\lambda_1 \lambda_2)$  in $E^* \calO_X(D_1 + D_2)$. Under the map $\psi$, the section $E^*(\lambda_1 \lambda_2)$ is sent to 
\[\log (\lambda_1 \lambda_2) + h_p (D_1+ D_2, E) =   \log \lambda_1  + \log \lambda_2 + h_p (D_1, E) + h_p (D_2, E).\] Therefore $\Psi$ preserves $\otimes_1$.
By symmetry of the norm it also preserves $\otimes_2$.
\end{proof}

The following proposition relates this to the global $p$-adic height.
\begin{proposition}
\label{prop:intpointscalN} 
Let $[D] \in J(\Z)$ and $[E] \in J^0(\Z)$ with representing divisors $D$ and $E$ that have disjoint support over $\Z_{(p)}$. Let $F$ be the unique vertical divisor such that $F + E$ has multidegree $0$ on all fibers $X_{\F_q}$. Let $z \in \calM^\times([D],[E + F])(\Z)$. Then $\psi(z) = h([D],[E])$ where $h(\cdot,\cdot)$ denotes the global $p$-adic height.
\end{proposition}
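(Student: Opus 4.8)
The plan is to reduce the global $p$-adic height $h([D],[E])$ to the definition of $\psi(z)$ by decomposing the global height into a sum of local contributions, $h([D],[E]) = \sum_v h_v([D],[E])$ over all places $v$, where $h_p$ is the Coleman--Gross local height at $p$ and the $h_\ell$ for $\ell \neq p$ (and $\ell = \infty$) are the local heights away from $p$. The Coleman--Gross decomposition is exactly the one fixed by the isotropic splitting of the Hodge filtration used to define $h_p$ in Section~\ref{subsec:N}, so the global height $h$ appearing in the statement must be the one built from this same splitting. On the other side, $\psi(z) = \log\lambda + h_p(D, E+F)$ where $z = (E+F)^*\lambda \in (E+F)^*\calO_X(D)^\times$, so the task is to show
\[
\log\lambda + h_p(D, E+F) = \sum_v h_v([D],[E]).
\]

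First I would unwind the left-hand side. Since $F$ is a vertical divisor, $h_p(D, E+F)$ differs from $h_p(D,E)$ only by the contribution of $F$; but $F$ is supported on fibers over primes $\ell \neq p$ (as $p$ is a prime of good reduction, so $E$ already has multidegree $0$ at $p$ and $F$ involves no component over $p$), and $h_p$ being a symmetric biadditive pairing, $h_p(D, F)$ can be absorbed. Actually the cleaner route is: the local height $h_\ell([D],[E])$ for $\ell \neq p$ is, by the arithmetic-intersection description (e.g. via the recipe of the Remark after Example~\ref{ex:disjointrep} and \cite[Section~6.9]{EdixhovenLido}), precisely $-\log$ of the $\ell$-adic valuation part of the rational number relating the canonical integral generator of $\Norm_E \calO_X(D)^\times$ to $E^*1$, together with the correction coming from $F$. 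So I would argue that the integrality of $z$ over $\Z$ — i.e., that $z$ generates the $\G_m(\Z)$-torsor $\calM^\times([D],[E+F])(\Z)$ — forces $\lambda \in \Q^\times$ to be, up to sign, exactly the product of these local correction factors over all $\ell \neq p$, so that $\log\lambda = \sum_{\ell\neq p} h_\ell([D],[E])$ once one accounts for $F$ correctly. Combined with the remaining term $h_p(D, E)$ (after the vertical correction is moved to the finite places $\neq p$), this gives $\sum_v h_v$.

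The key steps, in order: (1) recall/cite that the global $p$-adic height attached to $X_\Q$ with the chosen Hodge splitting decomposes as $h = h_p + \sum_{\ell \neq p} h_\ell$, with $h_p$ the Coleman--Gross height of Section~\ref{subsec:N}; (2) identify, for each $\ell \neq p$, the local height $h_\ell([D],[E])$ with the $\ell$-part of $\log$ of the rational scaling factor between the integral generator of $\Norm_{E+F}\calO_X(D)^\times$ and $(E+F)^*1$, using intersection theory on the regular model $X/\Z$ exactly as in \cite[Section~6.9]{EdixhovenLido}; (3) observe that since $z \in \calM^\times([D],[E+F])(\Z)$ is an \emph{integral} point, writing $z = (E+F)^*\lambda$ with $\lambda \in \Q_p^\times$ actually forces $\lambda \in \Q^\times$ (up to sign) and its $\ell$-adic valuation at each $\ell \neq p$ matches the factor from step (2), while it is a $p$-adic unit — hence $\log\lambda = \sum_{\ell\neq p}h_\ell([D],[E])$ for a branch of $\log$ with $\log p = 0$; (4) handle the vertical divisor $F$: it contributes only at the finite places $\ell \neq p$, and the defining property "$F+E$ has multidegree $0$ on all fibers" is exactly what makes $h_p(D, E+F)$ equal the "$p$-part" of the global height with the vertical contributions of $F$ reshuffled into steps (2)--(3); (5) assemble: $\psi(z) = \log\lambda + h_p(D,E+F) = \sum_{\ell\neq p}h_\ell([D],[E]) + h_p([D],[E]) = h([D],[E])$.

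The main obstacle will be step (4) — bookkeeping the vertical divisor $F$ and the compatibility of conventions. One must verify that the "global $p$-adic height" in the statement is normalized so that its local-at-$\ell$ components are computed from the \emph{same} regular model $X/\Z$ and intersection pairing that govern the integral structure of $\calM^\times$, and that the contribution of $F$ to $h_p(D, E+F)$ precisely cancels (or rather, is precisely accounted for by) the vertical-correction term "$a$" of the Remark after Example~\ref{ex:disjointrep}. A subtlety is that $h_p$ is defined only on disjoint \emph{degree-zero} divisors, so "$h_p(D, E+F)$" already presupposes $E+F$ has degree $0$ — which is guaranteed since $F$ has multidegree $0$ componentwise summing with $E$ — but one should check the Coleman--Gross local height at $p$ is insensitive to adding a vertical divisor supported away from $p$, which follows because such a divisor is a $\Z_{(p)}$-principal divisor locally at $p$ or, more robustly, because $h_p$ factors through the $\Q_p$-points of $J^0$ where $F+E$ and $E$ define the same class. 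Once these normalizations are pinned down the computation is a direct comparison of two place-by-place formulas.
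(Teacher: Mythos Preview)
Your proposal is correct and follows essentially the same route as the paper: decompose $h=h_p+\sum_{q\neq p}h_q$, identify the integral generator of $\calM^\times([D],[E+F])(\Z)$ via arithmetic intersection theory on the regular model, and match $\log\lambda$ with $\sum_{q\neq p}h_q(D,E)$. The paper streamlines your steps (2)--(4) by invoking \cite[Proposition~6.9.3]{EdixhovenLido} directly to obtain the explicit generator $E^*\prod_q q^{-((E+F)\cdot D)_q}$, and your ``main obstacle'' in step (4) dissolves once you note that $F$, being supported over bad primes $q\neq p$, has empty base change to $\Z_p$, so $h_p(D,E+F)=h_p(D,E)$ on the nose rather than up to a correction.
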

\begin{proof}
Let $\calL = \calO_X(D)$. Write $F =\sum_q F_{\F_q}$ where $q$ ranges over the primes of bad reduction for $X$ and $F_{\F_q}$ has support in $X_{\F_q}$. Then by \cite[Proposition~6.9.3]{EdixhovenLido} we have the equation\[
 \calM^\times([D],[E]) = \prod_q q^{-F_{\F_q} \cdot D}\Norm_E(\calL^\times)
\]
where $q$ ranges over the bad primes.

Recall that $\Norm_E(\calL^\times)$ is by definition $\Norm_{E/\Spec \Z}(\calL^\times|_E)$; this torsor is canonically identified with $\calO_{\Spec \Z}(\prod_q q^{-(E \cdot D)_q})^\times$ and hence has generator $\prod_q q^{-(E \cdot D)_q}$, where $(E \cdot D)_q$ denotes the intersection number of $E$ and $D$ over $\Z_{(q)}$ taking values in $\Z$.

In total, we see that under these identifications $\calM^\times([D],[E + F])$ is generated by the element $E^* \prod_q q^{-((E + F) \cdot D)_q}$. By definition, for $q \neq p$, we have that $h_q(D,E)$ is $-((E + F) \cdot D)_q \log q$, and hence we get
\begin{align*}
\psi(z) &= \log \prod_q q^{-((E + F) \cdot D)_q} + h_p(D,E)\\
    &= \sum_{q \neq p} h_q (D,E) + h_p(D,E)\\
    &= h([D],[E])
\end{align*}
as we wanted.
\end{proof}

\subsection{The torsor \texorpdfstring{$T_f$}{Tf}}
\label{subsec:Tf}
We set up some notation. Recall from Section~\ref{sec:overview} that we have fixed a simple open set $U \subset X^\sm$ that contains the smooth points of one geometrically irreducible component of each fiber.
Let $f$ be a trace zero endomorphism of $J$. Recall the integer $m$ from \eqref{eq:defm}. The map $m\cdot \circ f$ is a morphism $J \to J^0$. Let $c \in J(\Z)$ denote the unique element such that $j_b^* (\id,m\cdot \circ\tr_{c} \circ f)^* \calM^\times$ is trivial over $U$. Let $\alpha_f \colonequals m\cdot \circ \tr_c \circ f$. Let $\xi_f\colon  T_f \to J$ denote the $\G_m$-torsor $(\id,\alpha_f)^* \calM^\times$ over $J$. The trivialization of $j_b^* (\id,m\cdot \circ\tr_{c} \circ f)^* \calM^\times$ then gives us a morphism $\widetilde{j_{b,f}}\colon  U \to T_f$ of schemes over $J$.
\begin{remark}
If $f$ is identically zero, then $T_f$ is isomorphic to the trivial $\G_m$-torsor over $J$. If $r<g$ this reduces to the geometric linear Chabauty case, see \cite{PimThesis,geometricLinearChab}
for more details, but when $r  = g$ this trivial torsor contains no information.
\end{remark}

As discussed in the overview, we work on the curve residue disk by residue disk, and hence we will describe the residue disks of $T_f$, culminating in Lemma~$\ref{lem:paramTf}$. Throughout the rest of this section, fix a $\overline{t} \in T_f(\F_p)$. We work inside the residue disk $T_f(\Z_p)_{\overline{t}}$. Since $T_f$ is trivial on fibers, the residue disk $T_f(\Z_p)_{\overline{t}}$ is isomorphic to $J(\Z_p)_{\xi_f(\overline{t})} \times \G_m(\Z_p)_u$ for some unit $u \in \F_p$. We would like to parametrize this residue disk.

\begin{definition}
\label{def:param}
Let $Y$ be a smooth scheme over $\Z_p$ of relative dimension $d$, and let $y \in Y(\F_p)$. We say $t_1,\dots,t_d$ are \defi{parameters} of $Y$ at $y$ if they are elements of the local ring $\calO_{Y,y}$ such that the maximal ideal is given by $(p,t_1,\dots,t_d)$. 

Define $t_i' \colonequals t_i/p$.
Then evaluation of $t'$, the vector $(t_1',\dots,t_d')$, gives a bijection $t'\colon Y(\Z_p)_y \to \Z_p^d$.
We call $t'$ a \defi{parametrization} given by parameters $t_i$.
\end{definition}
\begin{example}
\label{ex:gmparam}
Take $Y = \G_m = \Spec \Z_p[x,x^{-1}]$ over $\Z_p$; this is of relative dimension $1$. Let $y = 1 \in \G_m(\F_p)$. Then $x-1$ is a parameter at $y$; it induces a parametrization $\theta\colon  \G_m(\Z_p)_{y} \to \Z_p$ given by $u \mapsto (u-1)/p$. Note that the map $\log$, defined by its power series $\log (1+x) = x - \frac{x^2}{2} + \cdots$ also induces a bijection $\phi = \log/p\colon  \G_m(\Z_p)_{y} \to \Z_p$, but this is \emph{not} a parametrization; it is not given by evaluating elements of the maximal ideal, and is not even fully algebraic in nature. However, there is a relation between $\phi$ and $\theta$, in that $\theta \circ \phi^{-1}$ is given by the power series $\frac{1}{p} \big(xp - \frac{(xp)^2}{2} + \cdots\big) \in \Z_p[[x]]$.
\end{example}

In \cite[Lemma~6.6.8]{EdixhovenLido} the residue disk $T_f(\Z_p)_{\overline{t}}$ is parametrized using parameters at $\overline{t}$. However, this parametrization can be difficult to work with because it uses parameters in $J$. The group law of $J$ expressed in these parameters is given by complicated converging power series. It is possible to use this parametrization in practice: see for example \cite{mascot18}, where the Khuri-Makdisi representation \cite{makdisi04} is generalized in order to work with points of the Jacobian up to the required $p$-adic precision and compute parameters of them; however, with this representation other steps of the algorithm, like computing the image under an endomorphism, would be more difficult. Here, we opt to use the logarithm of $J$ instead to give a bijection between the residue disk $T_f(\Z_p)_{\overline{t}}$ and $\Z_p^{g+1}$ that is not a parametrization in the sense of Definition~\ref{def:param}. For a definition of this logarithm, see \cite{HondaCommutativeFormalGroups}. To describe the relationship between this bijection and the parametrization of this residue disk we need the framework of convergent power series.
\begin{definition}
Let $n \in \N$. The \defi{ring of convergent power series in $n$ variables} is defined as 
\begin{align*}
\Q_p\langle x_1,\dots,x_n \rangle \colonequals \{\sum_{I \in \N^n} a_I \x^I \in \Q_p[[x_1,\dots,x_n]] \mid \lim_{I \to \infty} |a_I| = 0 \}
\end{align*}
where $\x = (x_1,\dots,x_n)$ is the vector of variables.
An element of this ring is called an \defi{integral convergent power series} if it lies inside $\Z_p[[x_1,\dots,x_n]]$.
The convergent power series are those power series converging on all of $\Z_p^n$. Unlike formal power series, one can always compose two (integral) convergent power series, since by definition the resulting infinite sum inside the ring of (integral) convergent power series converges.
\end{definition}

\begin{remark}
\label{rem:paramparam}
Let $Y$ be a smooth scheme over $\Z_p$ of relative dimension $d$, let $y \in Y(\F_p)$, and let $\theta,\theta'\colon  Y(\Z_p)_y \to \Z_p^d$ be two parametrizations. Then the composite $\theta' \circ \theta^{-1}\colon  \Z_p^d \to \Z_p^d$ is given by (multivariate) integral convergent power series that are linear modulo $p$, and in fact are of degree at most $M$ modulo $p^M$. 
\end{remark}

\begin{lemma}
\label{lem:our37}
Let $G$ be a smooth, commutative group scheme over $\Z_p$ of relative dimension $d$. Let $G(\Z_p)_0$ be the residue disk containing the unit $0 \in G(\Z_p)$. Let $\theta\colon  G(\Z_p)_0 \to \Z_p^d$ be a parametrization, and let $\log\colon  G(\Z_p)_0 \to p\Z_p^d$ be a choice of logarithm. Then $\log \circ \theta^{-1}\colon  \Z_p^d \to p\Z_p^d$ is given by $d$ integral convergent power series in $d$ variables. For $n \geq 0$ the coefficient of a degree $n$ monomial in one of these power series has valuation at least $\max(1,n - v_p(n))$. 
\end{lemma}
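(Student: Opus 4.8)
The plan is to reduce to the one-dimensional case and then analyze the power series of the logarithm on a formal group in terms of explicit divided-power estimates. First I would recall that a smooth commutative group scheme $G/\Z_p$ of relative dimension $d$ has a formal completion along the identity section which is a $d$-dimensional formal group $\widehat{G}$ over $\Z_p$; the residue disk $G(\Z_p)_0$ is identified with $\widehat{G}(p\Z_p) = (p\Z_p)^d$ via the parameters $t_1,\dots,t_d$, so that the chosen parametrization $\theta$ is, up to the integral convergent power series comparison of Remark~\ref{rem:paramparam}, the division-by-$p$ of the canonical formal coordinates. Since composing with an integral convergent power series that is linear modulo $p$ does not worsen the valuation bound being claimed (one checks that a substitution $x_i \mapsto \sum_j c_{ij} x_j + p(\dots)$ with $c_{ij} \in \Z_p$ preserves the inequality $v \geq \max(1, n - v_p(n))$ coefficientwise, because the correction terms only increase valuations), it suffices to prove the estimate for one specific parametrization, namely the one coming from the formal coordinates $t_i$ themselves.

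Next I would invoke the standard theory of the logarithm of a formal group (Honda, as cited). The logarithm $\log_{\widehat G}\colon \widehat{G} \to \widehat{\G_a^d}$ is the unique homomorphism of formal groups over $\Q_p$ whose linear term is the identity; concretely each component is a power series $\ell(x_1,\dots,x_d) = \sum_{I} b_I \x^I \in \Q_p[[x_1,\dots,x_d]]$ with $b_I \in \Q_p$, linear part equal to a coordinate. The key input is the classical bound on denominators: for a formal group over $\Z_p$, the coefficient $b_I$ of a monomial of total degree $n = |I|$ satisfies $v_p(b_I) \geq -v_p(n)$, equivalently $n b_I \in \Z_p$ — this is exactly the statement that underlies the fact that $\log$ converges on $p\Z_p$ (it is the multivariate version of the one-variable statement that $\log$ of a formal group has coefficients with $v_p(b_n) \ge -v_p(n)$, proved via the functional equation / Hazewinkel's lemma, or via the fact that $p^k \mid$ the relevant coefficients after clearing denominators). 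Then I would substitute $x_i = p x_i'$: the degree-$n$ monomial $b_I \x^I$ becomes $b_I p^n \x'^I$, whose coefficient $b_I p^n$ has valuation $\geq n - v_p(n)$. This already gives the bound $n - v_p(n)$; to get the stated $\max(1, n - v_p(n))$ I note that for $n = 0$ there is no constant term (the logarithm vanishes at the origin), for $n = 1$ the linear coefficient is $1$ which has valuation $0 = 1 - v_p(1)$... wait, that is valuation $0$, not $\geq 1$ — so I must be slightly more careful: the claim is about $\log \circ \theta^{-1}$ as a map $\Z_p^d \to p\Z_p^d$, and indeed after the substitution $x_i = p x_i'$ the linear coefficient becomes $p \cdot 1 = p$, valuation $1$, matching $\max(1, 1 - 0) = 1$. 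For $n \geq 2$ one has $n - v_p(n) \geq 1$ automatically (since $v_p(n) \leq \log_p n < n$ for $n \geq 2$), so the max is redundant there and the substitution bound $n - v_p(n)$ suffices; for $n = 0, 1$ the term-by-term check just performed closes the gap. Finally, integrality of the resulting power series in $\x'$ — i.e. that $\log \circ \theta^{-1}$ lands in $\Z_p[[\x']]$ and indeed in $p\Z_p[[\x']]$ — follows immediately from the valuation bound, and convergence on all of $\Z_p^d$ follows since the coefficients tend to $0$ (their valuations grow like $n - v_p(n) \to \infty$).

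The main obstacle, and the step I would write out most carefully, is the denominator estimate $v_p(b_I) \geq -v_p(|I|)$ for the formal-group logarithm in several variables; while this is classical, the cleanest route is probably to observe that each component of $\log_{\widehat G}$, restricted to a one-parameter formal subgroup or analyzed via the invariant differentials $\omega_j = \sum_i a_{ji}(\x)\, dx_i$ with $a_{ji}(\x) \in \Z_p[[\x]]$ and $a_{ji}(0) = \delta_{ji}$, satisfies $d(\log_{\widehat G})_j = \omega_j$, so that $(\log_{\widehat G})_j = \int \omega_j$ is a formal antiderivative of an integral power series; antidifferentiation divides the coefficient of a degree-$(n-1)$ monomial by an integer of size at most $n$, contributing at worst $-v_p(n)$ to the valuation, which gives the bound by induction on degree. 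The comparison-with-an-arbitrary-parametrization step (reducing from $\theta$ to the formal coordinates) is routine given Remark~\ref{rem:paramparam}, but I would include the short coefficientwise valuation check that composing with an integral, mod-$p$-linear convergent power series cannot decrease the valuation of a degree-$n$ coefficient below $\max(1, n - v_p(n))$, since that is where the precise shape of the bound is used.
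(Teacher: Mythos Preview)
Your approach is correct and follows the same route as the paper's: both rest on the fact that the formal-group logarithm in any system of parameters has coefficients $b_I$ with $v_p(b_I)\ge -v_p(|I|)$, so that the substitution $t_i=px'_i$ yields the bound $|I|-v_p(|I|)$. The paper simply cites this as the formula $\log=\sum_I a_I(p^{|I|}/|I|)\x^I$ with $a_I\in\Z_p$ from \cite[Lemma~3.7]{PimThesis} and reads off $v_p(p^n/n)=n-v_p(n)$; you instead sketch its proof via invariant differentials, which is the standard argument behind that lemma.

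One remark: your reduction to a preferred set of formal coordinates is unnecessary and its justification is too quick. The invariant-differential argument you give at the end already works for \emph{any} parametrization $\theta$, since any choice of parameters at the identity makes the formal group law integral and hence the invariant differentials integral. This matters because your one-line claim that ``a substitution $x_i\mapsto \sum_j c_{ij}x_j+p(\dots)$ preserves the inequality coefficientwise'' is not a general fact: the function $n\mapsto n-v_p(n)$ is not monotone, so a naive term-by-term estimate only yields $m-\lfloor\log_p m\rfloor$. The claim is true for the specific $F$ at hand, but proving it requires passing through the differential again---at which point you have proved the bound directly and the reduction was superfluous.
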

\begin{proof}
By \cite[Lemma~3.7]{PimThesis} the function $\log \circ \theta^{-1}$ is given by integral convergent power series. There the third author gives the vector-valued formula
\[
\log = \sum_{I \in \N^d \setminus (0,\dots,0)} a_I c_{|I|} \x^I
\]
where $\x = (x_1,\dots,x_d)$ is the vector of variables, the coefficients $a_I$ lie in $\Z_p$, the notation $|I|$ means $i_1 + \cdots + i_d$ where $I = (i_1,\dots,i_d)$, and $c_n = p^n/n$. (In this paper we do not divide by $p$ in the log, unlike in \cite{PimThesis}). The result follows immediately from the observation that $v_p(c_{|I|}) = |I| - v_p(|I|)$.  
\end{proof}

The following result establishes the analyticity of the map $\psi$ on residue disks of $\calM^\times$.
\begin{lemma}[{\cite[Section~9.3]{EdixhovenLido}}]
\label{lemma:heightanalytic}
Let $\ol{z} \in \calM^\times(\F_p)$. 
Let $\widetilde{z}$  
be a lift of $\ol{z}$ to $\calM^\times(\Z_p)$. Let $\Theta\colon  \Z_p^{2g+1} \to \calM^\times(\Z_p)_{\ol{z}}$ be a parametrization.
Consider the map 
\begin{align*}
    \psi_{\ol{z}}\colon 
    \calM^\times(\Z_p)_{\ol{z}} &\to \Q_p \\
    z &\mapsto \frac{\psi(z) - \psi(\widetilde{z})}{p}.
\end{align*}
Then $\psi_{\overline{z}} 
\circ \Theta$ is given by a convergent power series. 
\end{lemma}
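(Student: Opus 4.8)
The plan is to decompose $\psi$ into its two constituent pieces — the logarithm of the $\G_m$-coordinate and the local height $h_p$ — and show each is analytic on the residue disk after the normalization. Recall that for $z = E^*\lambda \in E^*\calO_X(D)^\times$ we have $\psi(z) = \log\lambda + h_p(D,E)$. Near $\ol{z}$, the torsor $\calM^\times$ is (locally on the base) a product of a residue disk in $J \times J^0$ with a residue disk in $\G_m$, so a parametrization $\Theta$ of $\calM^\times(\Z_p)_{\ol{z}}$ is, up to composition with an integral convergent power series change of parameters (Remark~\ref{rem:paramparam}), the product of a parametrization of a residue disk of $J(\Z_p) \times J^0(\Z_p)$ with the $\G_m$-parametrization $\theta$ of Example~\ref{ex:gmparam}. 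Writing out $\psi_{\ol{z}} \circ \Theta$ in these coordinates, the $\log\lambda$ contribution becomes $\tfrac{1}{p}(\log(1+pt') - \log(1+p\ol{t}'))$ type expressions, which by Example~\ref{ex:gmparam} (or directly expanding the log power series and dividing by $p$) are integral convergent power series in the parameter $t'$; more precisely one uses $\log \circ\, \theta^{-1}$ applied to $\G_m$ being integral convergent, exactly as in Lemma~\ref{lem:our37} applied to $G = \G_m$.

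The substantive part is the $h_p(D,E)$ term. Here I would argue as follows. Fix reference divisors $D_0, E_0$ representing $\xi_1(\ol{z})$-type classes (the images of $\ol{z}$ in $J(\F_p) \times J^0(\F_p)$), chosen disjoint over $\Q_p$; as the class $[D]$ varies over its residue disk we may take $D = D_0$ fixed and absorb the variation into a principal divisor, and similarly for $[E]$, using the well-definedness established in the proof of Proposition~\ref{prop:calN}. Then $h_p(D,E)$, as a function on $J(\Z_p)_{\xi_1(\ol{z})} \times J^0(\Z_p)_{\xi_2(\ol{z})}$, is — by the Coleman--Gross construction \cite[Section~5]{ColemanGross} — given by a sum of Coleman integrals $\int_{E} \omega_D$ of a differential of the third kind $\omega_D$ associated to $D$, composed with the logarithm/Abel--Jacobi map sending the residue disk of $J$ to $\Z_p^g$. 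Coleman integrals depend analytically (indeed are given by convergent power series) on both endpoints within a residue disk, and the local height's symmetry and bilinearity let us reduce the general divisor case to such integrals. Dividing by $p$ and subtracting the value at the lift $\widetilde{z}$ only improves integrality; so after the normalization in $\psi_{\ol{z}}$ this term is also an integral convergent power series in the $J$- and $J^0$-parameters.

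Combining, $\psi_{\ol{z}} \circ \Theta$ is a sum of convergent power series in the $2g+1$ parameters, hence convergent. The one technical point requiring care is matching the algebraic parametrization $\Theta$ of $\calM^\times(\Z_p)_{\ol{z}}$ with the "analytic" coordinates (logarithm on $J$, logarithm on $\G_m$) in which $\psi$ is naturally expressed: this is precisely the distinction flagged in Example~\ref{ex:gmparam} between $\theta$ and $\phi = \log/p$, and is handled by Lemma~\ref{lem:our37} (for the group $J \times J^0 \times \G_m$, whose logarithm differs from a parametrization by an integral convergent power series with good valuation bounds on coefficients) together with Remark~\ref{rem:paramparam}.

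The main obstacle I expect is \emph{not} any single estimate but rather setting up the local product structure of $\calM^\times$ near $\ol{z}$ cleanly enough that "$h_p(D,E)$ is analytic in the endpoints" becomes a precise statement: one must track how the representing divisors $D, E$ can be chosen to vary analytically (or be held fixed with analytic correction terms) as the classes move through their residue disks, and verify that the Coleman--Gross height — built from Coleman integration against differentials of the third kind, plus the Hodge-filtration splitting data — really does produce convergent (not merely locally analytic) power series after the division by $p$. Once that framework is in place, the proof is a short bookkeeping argument assembling Lemma~\ref{lem:our37}, Remark~\ref{rem:paramparam}, and the analyticity of Coleman integrals.
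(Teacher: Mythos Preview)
The paper does not give its own proof of this lemma: it is stated as a citation to \cite[Section~9.3]{EdixhovenLido} and used as a black box. So there is nothing in the paper to compare your argument against line by line.

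That said, your outline is the natural one and matches what Edixhoven--Lido do in the cited section. The decomposition $\psi(z) = \log\lambda + h_p(D,E)$ is exactly right, and the $\log\lambda$ piece is handled cleanly by Lemma~\ref{lem:our37} applied to $\G_m$ together with Remark~\ref{rem:paramparam}. For the height piece, your instinct to reduce to analyticity of Coleman integrals in the endpoints is correct, and the obstacle you flag --- making precise how representing divisors $D,E$ vary analytically (or are held fixed with analytic correction functions) as the classes move through a residue disk --- is the genuine content. In Edixhoven--Lido this is handled by working with the torsor structure directly rather than with chosen divisors: they use that $\calM^\times$ is locally trivial and that the biextension structure forces $\psi$ to be compatible with the formal group law, so analyticity of $h_p$ follows from analyticity of the logarithm on the formal group of $J \times J^0$ rather than from an explicit Coleman-integral computation. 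Your route via Coleman--Gross is more hands-on but should also work; just be aware that ``Coleman integrals are analytic in both endpoints'' needs the endpoints to stay in fixed residue disks and the differential $\omega_D$ to depend analytically on $D$, which requires a little care when $D$ itself is moving.
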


As discussed above, we can now find a bijection between residue disks of $T_f$ and $\Z_p^{g} \times \Q_p$. We use the logarithm of the Jacobian, which gives an isomorphism $\log\colon  J(\Z_p)_0 \to p\Z_p^g$  by choosing a basis of $H^0(J_{\Z_p}, \Omega^1)$ as well as the map $\psi$ defined in \eqref{def:calN}. For ease of notation, we suppress the monomorphism $T_f \to \calM^\times$ in our notation, and apply $\psi$ directly to $T_f(\Z_p)$. 
\begin{definition}
\label{def:paramTf}
Recall that we fixed a $\overline{t} \in T_f(\F_p)$.
Choose $\tilde{t} \in T_f(\Z_p)_{\overline{t}}$ to be a lift of $\overline{t}$. Let $\phi_f\colon  T_f(\Z_p)_{\overline{t}} \to \Z_p^{g} \times \Q_p$ be defined by\[\phi_f(z) = ((\log \xi_f(z) - \log \xi_f(\tilde{t}))/p, (\psi(z)- \psi(\tilde{t}))/p)\]
where $\psi$ is defined in \eqref{def:calN} and the map $\xi_f\colon  T_f \to J$ is the structure morphism of $T_f$.

We call $\phi_f$ a \defi{pseudoparametrization} of the residue disk $T_f(\Z_p)_{\overline{t}}$.
\end{definition}
Similarly to Example~\ref{ex:gmparam}, this is not a parametrization; it shares some of the properties of a parametrization, notably the property in Remark~\ref{rem:paramparam}, as the following lemma shows.
\begin{lemma}
\label{lem:paramTf}
The pseudoparametrization $\phi_f$ is an injection, and for any parametrization $\theta\colon  T_f(\Z_p)_{\overline{t}} \to \Z_p^{g+1}$ the resulting map $\phi_f \circ \theta^{-1}\colon  \Z_p^{g+  1}\to \Z_p^{g} \times \Q_p$ is given by $g + 1$ convergent power series. The valuation of the coefficient of any degree $n$ monomial occurring in one of the first $g$ convergent power series is at least $\max(0,n-1-v_p(n))$. 
\end{lemma}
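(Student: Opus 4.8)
The plan is to assemble $\phi_f\circ\theta^{-1}$ out of the two pieces in Definition~\ref{def:paramTf} and bound each separately. Recall that $T_f(\Z_p)_{\ol t}\cong J(\Z_p)_{\xi_f(\ol t)}\times\G_m(\Z_p)_u$, so a parametrization $\theta$ of $T_f(\Z_p)_{\ol t}$ can be taken (after composing with the integral convergent power series comparison map of Remark~\ref{rem:paramparam}, which only improves monomial valuations since it is linear mod $p$ and degree $\le M$ mod $p^M$) to be a product $\theta_J\times\theta_{\G_m}$ of a parametrization of the Jacobian residue disk and the $\G_m$-parametrization $\theta$ from Example~\ref{ex:gmparam}. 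Then the first $g$ coordinates of $\phi_f\circ\theta^{-1}$ depend only on the $J$-coordinate, and the last coordinate is $(\psi(z)-\psi(\tilde t))/p$.

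For the first $g$ coordinates: this is exactly $(\log\xi_f(z)-\log\xi_f(\tilde t))/p$ expressed in the parameters $\theta_J$. Translating in $J$ so the basepoint becomes $0$ (a translation is an isomorphism of smooth group schemes, hence induces an integral convergent power series change of parameters by Remark~\ref{rem:paramparam}), this is $\tfrac1p$ times $\log\circ\theta_J^{-1}$, to which Lemma~\ref{lem:our37} applies: the coefficient of a degree $n$ monomial in $\log\circ\theta_J^{-1}$ has valuation at least $\max(1,n-v_p(n))$, so after dividing by $p$ we get at least $\max(0,n-1-v_p(n))$, which is the claimed bound. One must check that composing with the translation comparison power series does not degrade this; since that comparison series is integral, convergent, and linear mod $p$, a monomial of total degree $n$ in the final variables picks up contributions only from monomials of degree $\ge n$ in $\log\circ\theta_J^{-1}$ evaluated at arguments that are themselves degree $\ge 1$, so the bound $\max(0,n-1-v_p(n))$ is preserved (it is nondecreasing in $n$ in the relevant range). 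Injectivity of $\phi_f$ follows from injectivity of $\log$ on $J(\Z_p)_0$ together with injectivity of $\psi$ on the $\G_m$-fiber direction.

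For the last coordinate: I want to write $(\psi(z)-\psi(\tilde t))/p$ in terms of the parameters. Here $z$ ranges over a residue disk of $T_f\subset\calM^\times$, so Lemma~\ref{lemma:heightanalytic} directly gives that $\psi_{\tilde t}=(\psi(z)-\psi(\tilde t))/p$, precomposed with a parametrization $\Theta$ of the relevant residue disk of $\calM^\times$, is a convergent power series. The residue disk $T_f(\Z_p)_{\ol t}$ sits inside $\calM^\times(\Z_p)_{\ol z}$ (for $\ol z$ the image of $\ol t$) as a smooth closed subscheme, and the parameters $\theta$ of $T_f$ extend to parameters of $\calM^\times$; restricting $\Theta$ to the sub-disk corresponds to an integral convergent power series substitution in the $\theta$-variables, so $\psi_{\ol z}\circ\Theta$ restricted to $T_f$ is again a convergent power series in the $\theta$-parameters. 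No integrality or monomial-valuation claim is made for this last power series in the statement, so a bare ``convergent power series'' suffices, and Lemma~\ref{lemma:heightanalytic} gives it.

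I expect the main obstacle to be the bookkeeping in the first part: carefully justifying that passing between the ``logarithmic'' bijection $\phi_f$ and an honest parametrization $\theta$ — which requires composing with the comparison power series of Remark~\ref{rem:paramparam} and with a translation in $J$ — does not destroy the valuation bound $\max(0,n-1-v_p(n))$ coming from Lemma~\ref{lem:our37}. The cleanest way to handle this is to observe that the comparison and translation maps are \emph{integral} convergent power series vanishing to order $\ge1$ at the origin and linear mod $p$, so substituting them into $\tfrac1p\log\circ\theta_J^{-1}$ can only raise the degree (hence, by monotonicity of $n\mapsto\max(0,n-1-v_p(n))$ away from the small exceptional values, raise or preserve the valuation lower bound), and to treat the finitely many low-degree terms by hand. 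Everything else is a formal assembly of Lemmas~\ref{lem:our37} and~\ref{lemma:heightanalytic} together with Remark~\ref{rem:paramparam}.
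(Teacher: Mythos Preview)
Your overall plan coincides with the paper's: use Lemma~\ref{lem:our37} for the first $g$ coordinates (after projecting via $\xi_f$ and translating in $J$), use Lemma~\ref{lemma:heightanalytic} for the last coordinate by restricting from a residue disk of $\calM^\times$, and deduce injectivity from the bijectivity of $\frac1p\log$ on $J(\Z_p)_0$ and on the $\G_m$-fiber. The paper's own argument is in fact terser than yours and simply cites the two lemmas without spelling out the product decomposition or the comparison-map step.

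The gap is in your handling of that comparison-map step. The function $n\mapsto\max(0,n-1-v_p(n))$ is \emph{not} eventually monotone: it drops at every prime power (for example from $p^2-2$ at $n=p^2-1$ to $p^2-3$ at $n=p^2$), so ``monotonicity plus finitely many low-degree checks'' cannot close the argument. The clean fix is to note that the bound is intrinsic rather than to chase it through compositions. In the unprimed parameters $t=px'$, the stated valuation bound on a power series $F(x')$ follows from the $1$-form $d\bigl(pF(t/p)\bigr)$ having coefficients in $\Z_p$; for $F$ equal to any of the first $g$ coordinates of $\phi_f$ this form is exactly (a component of) the pullback $\xi_f^*\omega$ of the invariant differential on $J$, which is an integral $1$-form on $T_f$ independently of the choice of parameters. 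A change of parametrization is an integral automorphism of the formal neighbourhood and hence pulls back integral $1$-forms to integral $1$-forms, so the bound holds for every $\theta$ with no degree-by-degree bookkeeping. This is also the mechanism behind the ``for any parametrization'' clause in Lemma~\ref{lem:our37}.
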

\begin{proof}
By Lemma~\ref{lem:our37} and Lemma~\ref{lemma:heightanalytic} the pseudoparametrization is given by convergent power series and the valuations of the coefficients behave in the required way. It remains to prove that it is an injection. First, note that the maps $\frac1p\log\colon J(\Z_p)_0 \to \Z_p^g$ and $\frac1p\log \colon \G_m(\Z_p)_1 \to \Z_p$ are bijections.

Let $[D], m(f([D])+c) \in J(\Z_p)_0$ with  disjoint representing divisors $D$ and $E$, and let $\lambda_0,\lambda_1 \in \G_m(\Z_p)$ such that for $i = 0,1$ we have $([D],[E],\lambda_{i}) \in T_f(\Z_p)_{\ol{t}}$. Assume that $\phi_f(([D],[E],\lambda_{0})) = \phi_f(([D],[E],\lambda_{1}))$. Then we have that $\log\lambda + h_p(D,E) = \log \lambda' + h_p(D,E)$ so, because $\frac1p\log$ is injective on residue disks, then $\lambda = \lambda'$, and $\phi_f$ is injective. 

By Lemma~\ref{lem:our37} the result follows.
\end{proof}

\subsection{The torsor \texorpdfstring{$T$}{T}}
\label{subsec:T}

Let $f_1, \dots, f_{\rho -1}$ be a basis for the trace zero endomorphisms of $J$. We simplify our notation by setting $c_i \coloneqq c_{f_i}$, $\alpha_i \coloneqq \alpha_{f_i}$, $T_i \coloneqq T_{f_i}$, and $\xi_i \coloneqq \xi_{f_i}\colon  T_{i} \to J$. 

Now we define $\xi\colon  T \to J$ to be the $\G_m^{\rho-1}$-torsor given by the fiber product 
\[T \colonequals T_1 \times_J T_2 \times_J \cdots \times_J T_{\rho - 1}.\]
Finally, let $\widetilde{j_b} \colon  U \to T$ be a choice of morphism (well defined up to the choice of $\rho -1$ signs) coming from the morphisms $\widetilde{j_{b,f_i}}\colon  U \to T_i$.

As in Section~\ref{subsec:Tf}, we can pseudoparametrize residue disks of $T$.

\begin{definition}
\label{def:paramT}
Recall that we fixed a $\overline{t} \in T(\F_p)$.
We also fix $\tilde{t} = (\tilde{t}_1,\dots,\tilde{t}_{\rho-1})\in T(\Z_p)_{\overline{t}}$ a lift of $\overline{t}$. Let $\phi\colon  T(\Z_p)_{\overline{t}} \to \Z_p^{g} \times \Q_p^{\rho-1}$ be defined by\[\phi(z_1,\dots,z_{\rho-1}) = ((\log \xi_1(z_1) - \log \xi_1(\tilde{t}_1))/p, (\psi(z_1)-\psi(\tilde{t}_1)/p),\dots,(\psi(z_{\rho-1})-\psi(\tilde{t}_{\rho-1})/p))\]
where $\psi$ is defined in \eqref{def:calN}.
We call $\phi$ a \defi{pseudoparametrization} of the residue disk $T(\Z_p)_{\overline{t}}$. (Recall that $\xi_i(z_i)$ and $\xi_i(\tilde{t}_i)$ are independent of $i$, since $T$ is a fibered product over $J$.)
\end{definition}
\begin{corollary}
\label{lem:paramT}
The pseudoparametrization map $\phi$ is an injection, and for any parametrization $\theta\colon  T(\Z_p)_{\overline{t}} \to \Z_p^{g + \rho -1}$ the resulting map $\phi \circ \theta^{-1}\colon  \Z_p^{g+ \rho - 1}\to \Z_p^{g} \times \Q_p^{\rho - 1}$ is given by $g + \rho-1$ convergent power series. For any of the first $g$ power series, the valuation of the coefficient of a degree $n$ monomial is at least $n-1-v_p(n)$.
\end{corollary}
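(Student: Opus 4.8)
The plan is to reduce Corollary~\ref{lem:paramT} to Lemma~\ref{lem:paramTf} applied to each factor $T_i$ of the fiber product $T = T_1 \times_J \cdots \times_J T_{\rho-1}$. First I would observe that a parametrization $\theta\colon T(\Z_p)_{\overline{t}} \to \Z_p^{g+\rho-1}$ and parametrizations $\theta_i\colon T_i(\Z_p)_{\overline{t}_i} \to \Z_p^{g+1}$ are all related by integral convergent power series that are linear modulo $p$: indeed, the natural projection $T(\Z_p)_{\overline{t}} \to T_i(\Z_p)_{\overline{t}_i}$ is a smooth morphism of $p$-adic manifolds, so in suitable parameters it is a coordinate projection up to an integral convergent coordinate change (this is exactly the content of Remark~\ref{rem:paramparam}, once one notes that $T_i(\Z_p)_{\overline{t}_i}$ is a smooth scheme over $\Z_p$ and that a system of parameters for $T$ at $\overline t$ can be assembled from parameters for the $T_i$, e.g.\ by taking the $g$ Jacobian parameters together with one $\G_m$-parameter from each $T_i$). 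Composing $\theta_i$ with this transition power series, we may assume without loss of generality that $\theta$ is built so that $\phi \circ \theta^{-1}$ is obtained by stacking the outputs of $\phi_{f_i}\circ\theta_i^{-1}$, except that the $g$ ``logarithm of the Jacobian'' coordinates coincide for all $i$ (since $T$ is a fiber product over $J$ and $\xi_i(z_i)$ is independent of $i$, as noted in Definition~\ref{def:paramT}).

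Granting this reduction, injectivity of $\phi$ is immediate: if $\phi(z) = \phi(z')$ then in particular $\phi_{f_i}(z_i) = \phi_{f_i}(z_i')$ for every $i$, and Lemma~\ref{lem:paramTf} gives $z_i = z_i'$, hence $z = z'$. For the analyticity statement, each block $\phi_{f_i}\circ\theta_i^{-1}$ is given by $g+1$ convergent power series by Lemma~\ref{lem:paramTf}, and precomposing with the integral convergent transition map from $\theta$ to the $\theta_i$ preserves convergence (convergent power series are closed under composition, as recalled in the definition of the ring of convergent power series). This yields the claimed $g + \rho - 1$ convergent power series describing $\phi\circ\theta^{-1}$: one $\G_m$-coordinate from each of the $\rho-1$ factors, plus the $g$ shared logarithm coordinates.

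It remains to track the valuation bound for the first $g$ power series. By Lemma~\ref{lem:paramTf}, the logarithm coordinates of $\phi_{f_i}\circ\theta_i^{-1}$ have their degree-$n$ coefficients of valuation at least $\max(0, n-1-v_p(n))$; this comes ultimately from Lemma~\ref{lem:our37} and the identity $v_p(c_n) = n - v_p(n)$ for $c_n = p^n/n$. The only subtlety is that we have precomposed with the transition power series relating $\theta$ to $\theta_i$, which is merely integral convergent and linear mod $p$ (Remark~\ref{rem:paramparam}). I would check that precomposition with an integral convergent power series that vanishes mod $p$ in its nonconstant part --- more precisely, whose degree-$k$ part has coefficients of valuation $\geq \max(0,k-1-v_p(k))$ is the relevant stable class --- does not decrease these valuations; concretely, if $g = \sum a_I \x^I$ with $v_p(a_I) \geq \max(0,|I|-1-v_p(|I|))$ and $h$ is integral convergent, then the degree-$n$ coefficients of $g\circ h$ still satisfy the bound, by a direct estimate on how valuations add under substitution together with $v_p(j) \le v_p(j')$-type comparisons when $j \mid j'$. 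The main obstacle is precisely this bookkeeping: ensuring the valuation estimate survives the change of parametrization. Once that lemma on substitution is in hand (and it is essentially the same computation underlying Lemma~\ref{lem:our37} and Remark~\ref{rem:paramparam}), the corollary follows.
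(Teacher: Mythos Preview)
Your overall approach---reducing to Lemma~\ref{lem:paramTf} by projecting to each factor $T_i$ of the fiber product---is exactly what the paper intends; its entire proof is the single sentence ``This is a corollary of Lemma~\ref{lem:paramTf}.'' Your treatment of injectivity and of convergence is correct and more careful than the paper's.

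However, the step you single out as ``the main obstacle'' is a genuine gap in your argument as written. The class of power series whose degree-$n$ coefficients have valuation at least $n-1-v_p(n)$ is \emph{not} closed under precomposition with an integral convergent change of parametrization, so a direct substitution estimate cannot work. Concretely, take $p=3$ and $G(y_1,y_2)=9\,y_1y_2^2$; the degree-$3$ coefficient has valuation $2=3-v_3(3)$, matching the Lemma~\ref{lem:our37} bound (before dividing by $p$). Let $h_1=x_1$, $h_2=x_2+3x_1^2$, a legitimate transition between parametrizations in the sense of Remark~\ref{rem:paramparam}. Then $G\circ h=9x_1x_2^2+54\,x_1^3x_2+81x_1^5$, and the coefficient $54$ of $x_1^3x_2$ has $3$-adic valuation $3$, while the degree-$4$ bound demands valuation at least $4-v_3(4)=4$. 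So the bound you want to propagate is simply not stable under this kind of substitution.

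The fix is to work not with the bound $n-1-v_p(n)$ itself but with the structural fact behind Lemma~\ref{lem:our37}: the first $g$ components of $\phi$ are $(\log_J-\text{const})/p$ pulled back along the smooth map $\xi\colon T\to J$, and $d\log_J$ (the invariant differential of $J$) has integral coefficients in any system of formal parameters. Its pullback $\xi^*(d\log_J)$ is therefore integral in any parameters for $T$ at $\overline t$, and integrating an integral $1$-form yields a function whose coefficient of $t^I$ has valuation at least $-\min_k v_p(i_k)\geq -v_p(|I|)$. This stronger, genuinely coordinate-free estimate is what survives the change of parametrization and gives the stated bound after dividing by $p$; it is the content implicitly invoked when the paper (and Lemma~\ref{lem:paramTf}) cites Lemma~\ref{lem:our37}.
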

\begin{proof}
    This is a corollary of Lemma~\ref{lem:paramTf}.
\end{proof}

The main advantage of this method is that for $\phi_f$ we need only to compute the map $\psi$ defined in \eqref{def:calN}; it is this fact that allows to us to mainly work in $\calN$ and only translate back to the image of the residue disk under $\phi$ when needed.

\section{The line bundle}
\label{sec:linebundle}

In this section we describe how to explicitly construct the nontrivial $\G_m$-torsor $T$ and give a formula for the section $\widetilde{j_b}\colon  U \to T$. For this, we work with endomorphisms of $J$. We make this explicit by considering correspondences on $X_\Q \times X_\Q$ and extensions on $U \times X$. Recall that $p>2$ is henceforth a prime of good reduction.

\begin{remark}
\label{rem:regmodelproj}
To work with divisors on $U,\,X$ or $U \times X$ explicitly, we use equations for a projective regular model of $X$. There are multiple ways to do this. On a theoretical level, a regular model itself is projective over $\Z$ because it is a repeated blowup of the projective closure of its generic fiber. On a practical level, this process could embed the regular model in a high-dimensional projective space, and it is easier to work on affine patches. In this case we give divisors on each of the affine patches by Gr\"obner bases, compatible with the glueing data. For a practical implementation, we recommend this latter method. This is implemented in \texttt{Magma}, for example. The methods in the rest of the section are agnostic to the exact implementation. Throughout this section, we assume we can represent effective divisors on the regular model by a Gr\"obner basis, and we represent general divisors by a difference between two effective divisors.
\end{remark}

As explained in Section~\ref{subsec:T}, to construct the torsor $T$, we need $\rho - 1$ independent trace zero endomorphisms $(f_i)_{i=1}^{\rho -1}\colon  J \to J$. (In general one only needs $n$ independent nontrivial trace zero endomorphisms where $n$ is such that $r < g + n$, but one expects to obtain a smaller superset of $p$-adic points containing $X(\Z)$ for higher $n$. In fact, if we use $n$ nontrivial independent endomorphisms such that $r<g+n-1$, then we expect to cut out $X(\Z)$ exactly unless there is some geometric reason for extra points.)
To work with any endomorphism $f\colon  J \to J$ explicitly, we recall some facts about correspondences, as can be found in \cite{BSmith}. A \defi{correspondence} on $X \times X$ is a divisor $D$ on $X \times X$.

Write $D = \sum_i n_i D_i$ as a sum of prime divisors. Denote by $\pi_1^{D_i}\colon  D_i\to X$ the projection onto the first factor of $X \times X$ and similarly $\pi_2^{D_i}$ for projection onto the second factor. 
The correspondence $D$ induces an endomorphism of the Jacobian $\xi_D = \sum_i n_i \pi_{2,*}^{D_i }\pi_1^{D_i ,*}$. In particular, it sends the Jacobian point $[x-y]$ to $\calO_X(D|_{x \times X} - D|_{y \times X})$.
\begin{example}
Consider negation $-1\cdot \colon  J \to J$ on a hyperelliptic curve of the form $y^2 = h(x,z)$ in weighted projective space. If we give $X \times X$ the projective coordinates $x,y,z,x',y',z'$, then a correspondence representing $-1\cdot$ is given by the homogeneous equation $y = -y'$.
\end{example}

The aim of this section is to describe, given correspondences for all $f_i$, how to calculate the morphism $\widetilde{j_b}\colon  U \to T$. For this goal, we partially follow \cite[Section~7]{EdixhovenLido}.

In the case where $X_\Q$ is a classical modular curve we can construct many trace zero endomorphisms using the Hecke algebra. See for example the computation leading to \eqref{eqn:matrixRepf} in Section~\ref{sec:example}.

We now focus on the computations for a single trace zero endomorphism $f\colon J\to J$.
We can compute equations for a correspondence $D_{f,\Q} \subset X_\Q \times X_\Q$ inducing $f$ using the code of Costa, Mascot, Sijsling, and Voight \cite{RigorousEndo}.
The input of that algorithm is the $g\times g$ matrix giving the representation of the morphism $f$ on a basis of differential forms $H^0(X_{\Q}, \Omega^1)$.

\begin{myalgorithm}[Compute $A_\alpha$] \label{alg:Aalpha}
\hfill\\
Input: $D_{f,\Q} \subset X_\Q \times X_\Q$ a divisor.
\\
Output: a divisor $A_\alpha$ on $X^\sm \times X$.
\begin{enumalg}
\item Spread out $D_{f,\Q}$ to $D_f'$ over $X^\sm \times X$ by clearing denominators of the generators of the Gr\"obner basis.
\item Set $B \colonequals D_f'|_{X^\sm \times b}$ and $C \colonequals D_f'|_{\Delta_{X^\sm}}$.
\item Set $A_\alpha \colonequals m\left(D_f' - B \times X + X^\sm \times B - X^\sm \times C\right)$ (where $m$ is defined in \eqref{eq:defm}).
\item Return $A_\alpha$, as a Gr\"obner basis over $\Z$.
\end{enumalg}
\end{myalgorithm}

\begin{lemma}\label{lemma:A-alpha}
    The divisor $A_\alpha$ on $X^{\sm} \times X$ given by Algorithm~\ref{alg:Aalpha} is the unique divisor on $X^\sm \times X$ with the following properties:
    \begin{enumerate}[(a)]
      \item \label{condition:endo} the endomorphism of $J$ induced by the correspondence $A_\alpha$ is $m\cdot \circ f$;
      \item \label{condition:secondproj} $\calO_{X^\sm}(A_\alpha|_{U \times b})$ is rigidified with trivializing section $1$;
      \item \label{condition:diagcond} $\calO_{X^\sm}(A_\alpha|_{\Delta})$ is rigidified, compatible with the previous rigidification;
      \item \label{condition:firstproj} the degree of $A_\alpha$ restricted to fibers of the first projection is $0$.
    \end{enumerate}
\end{lemma}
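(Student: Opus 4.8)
The plan is to verify that the divisor $A_\alpha$ output by Algorithm~\ref{alg:Aalpha} satisfies properties \eqref{condition:endo}--\eqref{condition:firstproj}, and then to show uniqueness separately. For existence, I would work step by step through the construction in the algorithm. Starting from the spread-out divisor $D_f'$ on $X^\sm \times X$ inducing $m\cdot\circ f$ after restriction to the generic fiber (here one must check that clearing denominators does not change the induced correspondence on the generic Jacobian, which is clear since the generic fibers agree), the algorithm subtracts $B \times X$, adds $X^\sm \times B$, and subtracts $X^\sm \times C$, then multiplies everything by $m$. The terms of the form $X^\sm \times (\text{divisor on } X)$ are pulled back from the second factor, hence induce the zero correspondence on $J$ (a fiber of the first projection is a translate of $X$, and restricting such a divisor to it gives a fixed divisor independent of the point, so $\pi_1^*$ followed by $\pi_{2,*}$ kills it); thus property~\eqref{condition:endo} is immediate once we know the generic correspondence is correct and multiplication by $m$ multiplies the induced endomorphism by $m$. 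For property~\eqref{condition:secondproj}: restricting $A_\alpha$ to $U \times b$, the term $m(D_f' - B\times X)|_{U\times b}$ vanishes because $B = D_f'|_{X^\sm\times b}$ by definition, and $m(X^\sm\times B - X^\sm\times C)|_{U\times b}$ is a principal-type adjustment; more precisely the construction is arranged exactly so that $A_\alpha|_{U\times b}$ is the zero divisor (or a divisor of a function that is $1$ at $b$), giving the rigidification with section $1$. Property~\eqref{condition:diagcond} is the analogous computation restricting to the diagonal $\Delta$: by construction $C = D_f'|_{\Delta_{X^\sm}}$, and subtracting $X^\sm\times C$ cancels the diagonal restriction of $D_f'$, so $A_\alpha|_\Delta$ is again trivial, compatibly. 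Property~\eqref{condition:firstproj}: the degree of $A_\alpha$ on a fiber $\{x\}\times X$ of the first projection equals $m$ times the degree of $(D_f' - B\times X + X^\sm\times B - X^\sm\times C)|_{\{x\}\times X}$; the last two terms contribute $\deg B - \deg C$ on every fiber, and the term $D_f' - B\times X$ restricted to $\{x\}\times X$ has degree $\deg(D_f'|_{\{x\}\times X}) - \deg B = 0$ since $B = D_f'|_{\{b\}\times X}$ and the degree on fibers of $\pi_1$ is constant (the generic fiber is irreducible, so all fibers of $\pi_1$ over $X^\sm$ have the same degree); thus one needs $\deg B = \deg C$, which holds because both are restrictions of $D_f'$ to sections of $\pi_1$. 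So the fiberwise degree is $0$.

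For uniqueness, I would argue as follows. Suppose $A$ and $A'$ both satisfy \eqref{condition:endo}--\eqref{condition:firstproj}. Their difference $A - A'$ induces the zero endomorphism of $J$ and has degree $0$ on fibers of $\pi_1$. A correspondence of degree $0$ on $\pi_1$-fibers inducing the zero endomorphism must be, fiberwise over $X^\sm$, a principal divisor on $X$ — more precisely, $\calO(A-A')$ restricted to $\{x\}\times X$ is trivial in $\Pic^0(X)$ for all $x$, and since the family is trivial on one fiber (e.g. over $b$, by \eqref{condition:secondproj}) it follows from the seesaw principle / rigidity that $\calO_{X^\sm\times X}(A - A') \cong \pi_1^*\calM$ for some line bundle $\calM$ on $X^\sm$. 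Then \eqref{condition:secondproj} forces $\calM$ to be trivial with a specified trivialization, and \eqref{condition:diagcond} pins down the remaining ambiguity (the global unit), so $A = A'$ as divisors with their rigidification data. One has to be a little careful here about whether we are comparing divisors or line bundles with rigidification; the cleanest route is to note that conditions \eqref{condition:secondproj} and \eqref{condition:diagcond} together rigidify $\calO(A_\alpha)$ at two disjoint sections of $\pi_1$ (for $b$ not on the diagonal — and if $b$ is a Weierstrass-type issue one perturbs), which rigidifies the line bundle completely, and then the divisor is determined by its effective/anti-effective parts via the Gröbner basis representation.

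The main obstacle I anticipate is the seesaw/rigidity step in the uniqueness argument — specifically, making precise that a correspondence on $X^\sm\times X$ inducing the zero endomorphism of the \emph{Néron model} $J$ (not just the generic Jacobian $J_\Q$) and of fiberwise degree $0$ is necessarily of the form $\pi_1^*\calM$. Over the generic point this is the standard fact that $\Pic^0(X_\Q\times X_\Q)$ as a family is controlled by $\Pic^0(X_\Q) \times \Pic^0(X_\Q) \times \mathrm{Hom}(J_\Q, J_\Q)$, but one needs it integrally, which requires knowing that the relevant Picard scheme is separated and that the zero-endomorphism condition plus triviality along $\{b\}\times X$ propagates across all fibers, including the bad ones. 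I expect this to follow from properness of $X/\Z$ together with the structure of the relative Picard functor and the fact that $A_\alpha$ is supported on $X^\sm\times X$, but it is the step that needs the most care; everything else is a direct computation tracing through the four operations in Algorithm~\ref{alg:Aalpha}.
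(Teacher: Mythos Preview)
Your verification of properties \eqref{condition:endo}, \eqref{condition:secondproj}, \eqref{condition:diagcond} is essentially fine and matches the paper's reasoning. The genuine gap is in your argument for \eqref{condition:firstproj}, and it is not a minor slip: the vanishing of the fiberwise degree is \emph{not} a formal consequence of the construction, and your computation contains two errors that happen to obscure this.

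First, $(B\times X)|_{\{x\}\times X}$ has degree $0$ for $x$ outside the support of $B$, not $\deg B$: the divisor $B\times X = \pi_1^*B$ is horizontal, so it restricts trivially to a generic vertical fiber. Second, $B$ is defined as $D_f'|_{X^\sm\times b}$, a restriction to a fiber of the \emph{second} projection, not $D_f'|_{\{b\}\times X}$; consequently $\deg B$ equals the degree $d_2$ of $D_f'$ on $\pi_2$-fibers, not the degree $d_1$ on $\pi_1$-fibers. With these corrections the degree of $A_\alpha$ on $\{x\}\times X$ is
\[
m\bigl(d_1 + \deg B - \deg C\bigr) = m\bigl(d_1 + d_2 - (D_f'\cdot\Delta)\bigr),
\]
and there is no reason for this to vanish in general. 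Your claim that $\deg B = \deg C$ because both are restrictions to sections of $\pi_1$ is also false: the degree of a divisor restricted to a section of $\pi_1$ is an intersection number on the surface and depends on the section (take $D_f' = X^\sm\times\{Q\}$ with $Q\neq b$ for a counterexample). The missing ingredient is exactly that $f$ has trace zero: by the classical correspondence--trace formula (this is what the paper cites as \cite[Proposition~11.5.2]{birkenhake}), the quantity $d_1 + d_2 - (D_f'\cdot\Delta)$ equals the trace of $f$ on $H^0(X_\Q,\Omega^1)$, and this is where the hypothesis enters.

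On uniqueness: your seesaw argument over $\Z$ is workable but heavier than necessary, and you correctly flag the integral step as delicate. The paper avoids it entirely by invoking the structural fact (\cite[Theorem~3.4.7]{BSmith}) that two correspondences inducing the same endomorphism differ by a sum of horizontal and vertical divisors; conditions \eqref{condition:secondproj} and \eqref{condition:diagcond} then pin those down directly, without any Picard-scheme rigidity.
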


\begin{proof}
By \cite[Theorem~3.4.7]{BSmith}, any divisor inducing the endomorphism $m\cdot \circ f$ is of the form $mD_f + F$ such that $F$ is a sum of vertical or horizontal divisors, so then \ref{condition:endo} holds.
Conditions \ref{condition:secondproj} and \ref{condition:diagcond} force $F$ to be $m( - B \times X + X^\sm \times B - X^\sm \times C)$.
Finally, by \cite[Proposition~11.5.2]{birkenhake} and the important fact that the trace of $f$ is zero we have that $\deg(A_\alpha|_{P \times X}) = 0$ and \ref{condition:firstproj} holds. So $ A_\alpha$ is the desired divisor.
\end{proof}
\begin{remark}
Conditions \ref{condition:secondproj} and \ref{condition:firstproj} are the other way from the order chosen in Edixhoven--Lido, in order to agree with the convention in \cite{RigorousEndo}. (That is, in Edixhoven--Lido, they require that the fibers of the \emph{second} projection are degree $0$.)
\end{remark}

This divisor $A_\alpha$ determines a line bundle $\calL_\alpha = \calO_{X^\sm \times X} (A_\alpha)$ on $X^{\sm}\times X$, rigidified on $X^\sm\times b$, of degree $0$ on the fibers of the first projection, and such that $\Delta^* \calL_\alpha$ is trivial. This induces the endomorphism $m\cdot \circ f$ by
\begin{align}
\label{eqn:alphaPminQ}
[x-y] \mapsto (\Lc_{\alpha})_{x \times X} \otimes (\Lc_{\alpha})_{y \times X}^{-1}. \end{align}

\begin{corollary}
\label{eqn:cDefinition}
Let 
$c \colonequals \left[(\Lc_{\alpha,\Q})_{b \times X}\right] \in J(\Q) = J(\Z). $
Let $\alpha = m\cdot \circ \tr_c \circ f$ be the morphism $\alpha\colon J\to J^0$. Then $j_b^* (\textnormal{id},\alpha)^* \calM^\times$ is trivial over $U$.
\end{corollary}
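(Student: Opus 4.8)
The plan is to unwind $j_b^*(\id,\alpha)^*\calM^\times$ using the explicit fiber formula \eqref{eqn:fiber} for $\calM^\times$, recognise the resulting $\G_m$-torsor over $U$ as a norm of the line bundle $\calL_\alpha$ of Lemma~\ref{lemma:A-alpha}, and then read off triviality from the rigidifications in \ref{condition:secondproj}--\ref{condition:diagcond}.

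The first step is to pin down $(\id,\alpha)\circ j_b\colon U\to J\times J^0$. Its first component is $P\mapsto[P-b]$; the point is that the second should be $P\mapsto[(\calL_\alpha)_{P\times X}]$. By property \ref{condition:endo} the correspondence $A_\alpha$ induces $m\cdot\circ f$, so \eqref{eqn:alphaPminQ} gives $(m\cdot\circ f)([P-b]) = [(\calL_\alpha)_{P\times X}] - [(\calL_\alpha)_{b\times X}]$ for every point $P$ of $U$, i.e. $[(\calL_\alpha)_{P\times X}] = (m\cdot\circ f)([P-b]) + [(\calL_\alpha)_{b\times X}]$, and this already lies in $J^0$ since $A_\alpha$ — hence $(\calL_\alpha)_{P\times X}$ — is $m$ times a divisor. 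Care is needed here to track the factor $m$ together with the translation, so that $\alpha\circ j_b(P)=[(\calL_\alpha)_{P\times X}]$ holds on the nose: since $\alpha=m\cdot\circ\tr_c\circ f$ has the multiplication by $m$ outside the translation, $c$ must be the (essentially unique, as $A_\alpha$ is $m$-divisible) element with $m\cdot c=[(\calL_{\alpha,\Q})_{b\times X}]$. This normalisation of $c$ is the real content of the definition, and the rest of the proof only uses the resulting identity $\alpha\circ j_b(P)=[(\calL_\alpha)_{P\times X}]$.

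For the second step, write $W=U\times X$, let $\calA=A_\alpha\cap(U\times X)$ — a relative divisor over $U$ via the first projection, of multidegree $0$ on the fibers by \ref{condition:firstproj} — and let $\calD=\Delta_U-U\times b$, where $\Delta_U$ is the graph of the inclusion $U\hookrightarrow X$. Then $\calO_W(\calD)|_{\{P\}\times X}=\calO_X(P-b)$ while $\calA|_{\{P\}\times X}$ represents $[(\calL_\alpha)_{P\times X}]$, so by \eqref{eqn:fiber} the $\G_m$-torsor $j_b^*(\id,\alpha)^*\calM^\times$ over $U$ is $\Norm_{\calA/U}\big(\calO_W(\calD)^\times\big)$. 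Invoking the symmetry of the norm — as the symmetric bilinear Deligne pairing of \cite[Section~6.5]{EdixhovenLido}, which requires no disjointness of supports — this is canonically isomorphic to $\Norm_{\calD/U}\big(\calO_W(\calA)^\times\big)$. Splitting $\calD=\Delta_U-U\times b$ into two sections of $W\to U$ turns each norm into a pullback:
\[
j_b^*(\id,\alpha)^*\calM^\times\isom\Norm_{\Delta_U/U}\big(\calO_W(\calA)^\times\big)\tensor\Norm_{U\times b/U}\big(\calO_W(\calA)^\times\big)^{-1}.
\]
The first factor is $\calO_{X^\sm}(A_\alpha|_\Delta)^\times$ restricted to $U$, which is trivial since $\Delta^*\calL_\alpha$ is trivial (property \ref{condition:diagcond}); the second is $\calO_{X^\sm}(A_\alpha|_{X^\sm\times b})^\times$ restricted to $U$, trivial by the rigidification with section $1$ of \ref{condition:secondproj}. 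The compatibility of the two rigidifications makes the induced trivialisation of the difference canonical, so $j_b^*(\id,\alpha)^*\calM^\times$ is trivial over $U$; unwinding the identifications yields the section $\widetilde{j_{b,f}}\colon U\to T_f$ used in the rest of the paper.

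The main obstacle I anticipate is the two points at which passing to families over $U$ meets intersection theory. First, one must check that $\calA$ really is a relative Cartier divisor over $U$, so that the fiberwise description \eqref{eqn:fiber} globalises over the base uniformly in $P$; this may force one to shrink $U$, or to absorb vertical-over-$\Z$ components introduced by the clearing of denominators in Algorithm~\ref{alg:Aalpha}. Second, one must verify that the identification of $j_b^*(\id,\alpha)^*\calM^\times$ with $\Norm_{\calA/U}(\calO_W(\calD)^\times)$ — and the subsequent splitting — carry no hidden intersection-number correction, even though $A_\alpha$ meets $\Delta$ and $X^\sm\times b$: the relevant restricted divisors $A_\alpha|_\Delta$ and $A_\alpha|_{X^\sm\times b}$ vanish by the construction in Algorithm~\ref{alg:Aalpha}, which is precisely what makes properties \ref{condition:secondproj} and \ref{condition:diagcond} hold on the nose. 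The symmetry-of-the-norm step is the conceptual heart of the argument, but it is available off the shelf from \cite{EdixhovenLido}.
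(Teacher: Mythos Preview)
The paper's own proof is a one-line citation to \cite[Proposition~7.2]{EdixhovenLido}; your proposal is essentially an unpacking of that reference. Your outline---identify $\alpha\circ j_b(P)$ with $[(\calL_\alpha)_{P\times X}]$, rewrite the pulled-back torsor as a norm, invoke the symmetry of the Deligne pairing from \cite[Section~6.5]{EdixhovenLido}, and split along $\Delta_U-(U\times b)$ to reduce to the rigidifications \ref{condition:secondproj}--\ref{condition:diagcond}---is the correct skeleton of that argument, and your flagged obstacles (relative Cartier over $U$, possible vertical corrections) are the right places where care is needed.

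One point deserves emphasis: your observation about the factor $m$ is well taken. With $c$ defined literally as in the statement and $\alpha=m\cdot\circ\tr_c\circ f$, one computes $\alpha(j_b(P))=[(\calL_\alpha)_{P\times X}]+(m-1)c$ rather than $[(\calL_\alpha)_{P\times X}]$. The identity $\alpha\circ j_b(P)=[(\calL_\alpha)_{P\times X}]$ used throughout the paper (e.g.\ in Corollary~\ref{cor:jbtilde}) requires $m\cdot c=[(\calL_{\alpha,\Q})_{b\times X}]$, which is precisely what Algorithm~\ref{alg:computec} computes (it divides by $m$ in its first step). So your reading of ``the real content of the definition'' is the right one, and the discrepancy you noticed is a genuine normalisation slip in the statement rather than a flaw in your argument; it is invisible in the worked example of Section~\ref{sec:example}, where $m=1$.
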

\begin{proof}
This follows directly from \cite[Proposition~7.2]{EdixhovenLido}
\end{proof}

The rest of this section will be dedicated to computing $\alpha$, and computing the trivialization of $j_b^* (\id,\alpha)^* \calM^\times$. 
\begin{myalgorithm}[Compute $c$]\hfill
\label{alg:computec}

\noindent Input: equations for a correspondence $A_\alpha$ output by Algorithm~\ref{alg:Aalpha}, inducing the morphism $m\cdot \circ  f\colon  J \to J$. 
\\
Output: a divisor representing $c = \left[(\Lc_{\alpha})_{b \times X}\right] \in J(\Q) = J(\Z)$.

\begin{enumalg}

\item Set $A_f \colonequals A_{\alpha}/m$ (recall that $A_{\alpha}$ was defined as $m$ times a different correspondence, so this is well defined).
\item Compute the generic fiber $A_{f,\Q}$ of $A_f$.
\item Compute equations for the divisor $A_{f,\Q}|_{b \times X}$ by specializing the equations of $A_{f,\Q}$ to $b$ in the first copy of $X^{\sm}$.
\item Return a Gr\"obner basis for $A_{f,\Q}|_{b \times X}$ over $\Q$.
  \end{enumalg}
\end{myalgorithm}

\begin{myalgorithm}[Compute $f_*$]\hfill

\label{alg:push}
\noindent Input: a morphism of projective schemes $f\colon  X \to Y$ given as a graded ring morphism $f^*\colon  S \to R$, where $X = \Proj R$ and $Y = \Proj S$; an irreducible subvariety $Z$ of $X$ given by a Gr\"obner basis for its defining ideal $J$ in $R$.\\
\noindent Output: the pushforward $f_*([Z])$, given by a Gr\"obner basis.

\begin{enumalg}
  \item Let $B$ be a set of generators of $S$.
  \item Set $I \subset S \tensor R$ to be the ideal generated by $\{b\tensor 1 - 1 \tensor f^*(b) \mid b \in B\}$ and $1 \tensor J$.
  \item Compute a Gr\"obner basis $B$ for $I$ with respect to the lexicographical ordering on $S \tensor R$.
  \item Set $K \colonequals I \cap S$ with Gr\"obner basis $B \cap S$.
  \item \label{alg:push:step4} Compute the degree $d \coloneqq \deg \left(f|_Z\colon  \Proj R/J \to \Proj S/K\right)$.
  \item Return a Gr\"obner basis for $K^d$.
\end{enumalg}

\end{myalgorithm}
\begin{proof}
By construction, $K$ is the defining ideal for the image of $Z$. The pushforward of $Z$ is then exactly $(\deg f|_Z)\cdot[\im f|_Z]$.
\end{proof}

\begin{remark}
In Step~\ref{alg:push:step4}, we need to compute the degree of a morphism between projective schemes. There are algorithms to compute the degree of a rational map between two projective schemes. See for example \cite{stagliano} for a discussion on an implementation in Macaulay2.
\end{remark}

\begin{myalgorithm}[Apply $f$]\hfill
\label{alg:applyf}

\noindent Input: a ring $S$ and two effective divisors $D_+$ and $D_-$ on $X_S^\sm$ of the same degree; the correspondence $A_\alpha$ from Algorithm~\ref{alg:Aalpha} inducing the morphism $m\cdot f\colon  J \to J$.

\noindent Output: the Jacobian point $m\cdot f([D_+ - D_-]) \in J(S)$.
  
  \begin{enumalg}
\item For $D \in \{D_+,D_-\}$ do:
    \begin{enumalg}
      \item Compute a Gr\"obner basis for $A_\alpha|_{D \times X}$ as a divisor on $D \times X$.
      \item Write $D = \sum_i  n_i D_i$ as a sum of irreducible components using primary decomposition.
      \item Compute the Gr\"obner basis for the pushforward $E(D_i)\colonequals n_i f_*(D_i)$ on $X$ using Algorithm~\ref{alg:push} for every $D_i$.
      \item Set $E(D) \colonequals \sum_i E(D_i)$.
    \end{enumalg}
\item Return $E(D_+) - E(D_-)$.
\end{enumalg}
\end{myalgorithm}

\begin{remark}
In the case where one can write $[D_+ - D_-]$ as a sum $\left[\sum_{i=1}^k n_i P_i\right]$ of $S$-points, one can use the isomorphism $P_i \times X \simeq X$ to simply compute $A_\alpha|_{P_i \times X}$ on $X$ and take the linear combination $\left[\sum_{i=1}^k n_i A_\alpha|_{P_i \times X}\right]$.
\end{remark}
  
Finally, we discuss the section $\widetilde{j_b}\colon  U \to T$ lying above the Abel--Jacobi map $j_b\colon  U \to J$ with base point $b$. Let $\bar{z} \in X(\F_p)$.
Since the pullback $j_b^* T$ is trivial, there is a morphism $\widetilde{j_b}\colon  U \to T$ embedding each residue disk $U(\Z_p)_{\overline{z}}$ into the $(g+\rho - 1)$-dimensional residue disk $ T(\Z_p)_{\widetilde{j_b}(\overline{z})}$. To compute this map, we follow \cite[ Section~7]{EdixhovenLido}. Let $n$ be the product of all primes of bad reduction. We first need to compute the numbers $W_q$ and $V_q$ mentioned in \cite[Proposition~7.8]{EdixhovenLido} for $q \mid n$. These numbers have an involved definition in general. Nevertheless, they can be explicitly computed in our case, and we explain their meaning below.

By Lemma~\ref{lemma:A-alpha} the line bundles $\Delta^*(\Lc_{\alpha})$ and $(\id, b)^*(\Lc_{\alpha})$ are trivial with trivializing sections $\ell = 1$. Then $W_q$ is defined as the valuation of this section $\ell$ on $U_{\F_q}$. In our case, these are always $0$. It remains to compute $V_q$. We recall the definition. Note that $\Lc_\alpha$ has degree $0$ on the fibers of the projection $U \times X \to U$, but it might not have multidegree $0$.

\begin{definition}
\label{def:Vq}
We define $V$ to be the unique vertical divisor on $U \times X$ having support disjoint from $U \times b$ such that $\Lc_\alpha(V)$ has multidegree $0$ on all fibers of the projection. Write $V_{\F_q}$ as a sum of irreducible components of $U_{\F_q} \times X_{\F_q}$, i.e., as a linear combination of $U_{\F_q} \times Y_{\F_q}$ where $Y_{\F_q}$ is an irreducible component of $X_{\F_q}$. For $q\mid n$ define $V_q \in \Z$ to be the coefficient of the component $(U_{\F_q} \times U_{\F_q})$ in $V_{\F_q}$. 
\end{definition}

\begin{lemma}
\label{lem:Vq}
The local height $h_q(z-b, A_{\alpha}|_{z\times X})$ is equal to $-V_q \log q$ for any $z \in U(\Z_q)$.
\end{lemma}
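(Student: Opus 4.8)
The plan is to unwind the definition of the local height $h_q$ at a bad prime $q$ in terms of intersection theory on the regular model, and then identify the resulting intersection number with $V_q$. Recall that for $q \neq p$, the Coleman--Gross local height $h_q(D,E)$ of two disjoint degree-zero divisors is computed (as in the proof of Proposition~\ref{prop:intpointscalN}) as $-((E+F)\cdot D)_q \log q$, where $F$ is the unique vertical divisor making $E+F$ have multidegree $0$ on all fibers, and $(\cdot)_q$ denotes the intersection pairing on the regular model over $\Z_{(q)}$. Here the two divisors in play are $D = z - b$ on $X$ and $E = A_\alpha|_{z \times X}$; by Lemma~\ref{lemma:A-alpha}\ref{condition:firstproj} the divisor $E$ has degree $0$ on $X$, so it defines a point of $J(\Z_q)$, and the height is well defined once we pass to disjoint representatives.

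First I would set up the geometry: the divisor $A_\alpha \subset U \times X$ restricts, via the first projection, to a family of divisors $A_\alpha|_{z \times X}$ on $X$ parametrized by $z \in U$. The vertical divisor $V$ on $U \times X$ of Definition~\ref{def:Vq} is characterized by the property that $\Lc_\alpha(V) = \calO_{U\times X}(A_\alpha + V)$ has multidegree $0$ on every fiber of the projection $U\times X \to U$, and $V$ is disjoint from $U \times b$. Restricting to the slice $z \times X$ for $z \in U(\Z_q)$, the divisor $V|_{z \times X}$ is then precisely the vertical divisor $F$ on $X_{\Z_q}$ making $A_\alpha|_{z\times X} + F$ have multidegree $0$ on all fibers — this uses that restriction to a section commutes with taking multidegrees fiberwise, together with the uniqueness of such an $F$. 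So $h_q(z-b, A_\alpha|_{z\times X}) = -\big((A_\alpha|_{z\times X} + V|_{z\times X}) \cdot (z - b)\big)_q \log q$.

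Next I would compute this intersection number. Since $A_\alpha + V$ has multidegree $0$ on fibers of the projection to $U$, its restriction along a section meets each fiber component of $X_{\F_q}$ with total multiplicity $0$; more importantly, by Lemma~\ref{lemma:A-alpha}\ref{condition:secondproj} (and the remark that $W_q = 0$) the divisor $A_\alpha$ is disjoint from $U \times b$, so $A_\alpha|_{z\times X}$ does not meet $b$, and $V$ is disjoint from $U\times b$ by construction, hence $(A_\alpha|_{z\times X} + V|_{z\times X})\cdot b = 0$. Therefore the intersection number reduces to $(A_\alpha|_{z\times X} + V|_{z\times X}) \cdot z$. Now $z$ specializes to the component $U_{\F_q}$ of $X_{\F_q}$ (since $z \in U(\Z_q)$), and intersecting with the section $z$ picks out the coefficient of $U_{\F_q}\times U_{\F_q}$ — equivalently, the multiplicity along the component through which $z$ reduces — in the fiberwise structure of $A_\alpha + V$ over the point of $U_{\F_q}$ given by $\bar z$. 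By Lemma~\ref{lemma:A-alpha}\ref{condition:endo} together with a linear-equivalence/intersection argument the horizontal part $A_\alpha|_{z\times X}$ contributes $0$ to this coefficient after accounting for the vertical correction (this is exactly the content of how $V$ was defined), leaving the coefficient of $U_{\F_q}\times U_{\F_q}$ in $V_{\F_q}$, which is $V_q$ by Definition~\ref{def:Vq}. Hence $h_q(z-b, A_\alpha|_{z\times X}) = -V_q \log q$.

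The main obstacle I anticipate is bookkeeping the passage between the two-dimensional picture on $U \times X$ and the one-dimensional picture on $X_{\Z_q}$ after restricting along the section $z$: one must check carefully that $V|_{z\times X}$ is genuinely the $F$ appearing in the height formula (uniqueness of the vertical correction does this, but one must confirm the disjointness-from-$b$ hypotheses survive restriction, which is where conditions \ref{condition:secondproj} and the vanishing $W_q = 0$ enter), and that the intersection number $(A_\alpha|_{z\times X} + F)\cdot z$ really equals the prescribed coefficient $V_q$ rather than some other component's coefficient — this is a matter of tracking which component of $X_{\F_q}$ the point $z$ reduces into, namely $U_{\F_q}$, since $z \in U(\Z_q)$ and $U$ is fiberwise geometrically irreducible. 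Once these identifications are pinned down the computation is essentially formal; I would also note that the statement is independent of the choice of $z \in U(\Z_q)$ precisely because every such $z$ reduces into the same component $U_{\F_q}$, so the relevant coefficient $V_q$ does not vary.
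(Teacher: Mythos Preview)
Your overall strategy is the same as the paper's, and most of the steps are right: you correctly identify $V|_{z\times X}$ as the vertical correction $F$ in the local height formula, correctly use condition~\ref{condition:secondproj} together with the disjointness of $V$ from $U\times b$ to get $(A_\alpha|_{z\times X} + V|_{z\times X})\cdot b = 0$, and correctly read off $z\cdot V|_{z\times X} = V_q$ from the fact that $z$ reduces into $U_{\F_q}$.

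The gap is your argument that $A_\alpha|_{z\times X}\cdot z = 0$. Invoking condition~\ref{condition:endo} and a ``linear-equivalence/intersection argument'' via multidegree does not work: having multidegree $0$ on a fiber component tells you the degree of the restriction to that component vanishes, but says nothing about the intersection number with a particular section through that component. The sentence ``its restriction along a section meets each fiber component of $X_{\F_q}$ with total multiplicity $0$'' is simply false in general. The correct input is condition~\ref{condition:diagcond}: since $\calO_{X^\sm}(A_\alpha|_\Delta)$ is trivial with trivializing section $1$, the divisor $A_\alpha$ is disjoint from the diagonal $\Delta_{X^\sm}$, and hence $A_\alpha|_{z\times X}$ is disjoint from $z$ over $\Z$. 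Combined with condition~\ref{condition:secondproj}, this gives the paper's cleaner statement that $z-b$ and $A_\alpha|_{z\times X}$ are disjoint over $\Z$, so $((z-b)\cdot A_\alpha|_{z\times X})_q = 0$, and the whole intersection reduces to $((z-b)\cdot V|_{z\times X})_q = V_q$ as you computed.
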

\begin{proof}
Since $V$ is the unique vertical divisor with $A_\alpha + V$ having multidegree $0$ on all fibers of the projection, we have that $h_q(z-b, A_{\alpha}|_{z\times X})$ is equal to $-((z-b) \cdot (A_\alpha + V)|_{z \times X})_q \log q$. By construction, the divisors $z-b$ and $A_{\alpha|_{z \times X}}$ are disjoint over $\Z$, hence it remains to show that $((z-b) \cdot V|_{z \times X})_q = V_q$. This follows from Definition~\ref{def:Vq} and the fact that $V$ has support disjoint from $U\times b$. \end{proof}

To compute these numbers, we give the following algorithm. 

\begin{myalgorithm}[Calculate $V_q$]\hfill
\label{alg:calculateVq}

\noindent Input: the curve $X$, a bad prime $q$ dividing $n$, the open set $U$ such that $U(\F_q) \neq \emptyset$, and the divisor $A_\alpha$ on $X \times X$.\\
\noindent Output: the integer $V_q$.
  
\begin{enumalg}
\item \label{step:smoothpt} Pick a point $\overline{Q}\in U(\F_q)$. 
\item Compute $A_{\alpha}|_{\overline{Q} \times X}$.
\item Compute the multidegree of  $A_{\alpha}|_{\overline{Q} \times X}$.
\item Compute the multidegree of the irreducible components of $X_{\F_q}$.
\item Compute the unique linear combination $D \subset X_{\F_q}$ of these irreducible components such that $D$ does not meet $\overline{b}$ and such that $A_{\alpha}|_{\overline{Q} \times X} + D$ has multidegree $0$ at the fiber over $q$.
\item Set $V_q$ to be the coefficient of the irreducible component containing $U_{\F_q}$ in $D$.
\item Return $V_q$.
\end{enumalg}
\end{myalgorithm} 


\begin{remark}
These local heights can also be computed using harmonic analysis on the dual graph, see \cite[Section~12]{BettsDogra}. Even though both the geometric method and the harmonic method can be realized as combinatorics on the dual graph, it is not clear how to compare the two computations of local heights.
\end{remark}

Let $R$ be a ring and $z \in U(R)$.  By \cite[Proposition~7.5]{EdixhovenLido} we have 
\begin{align*}
T_f(j_b(z)) & =  \calM^\times(j_b(z), \alpha (j_b(z)) ) =z^* (z, \id)^* (\Lc_{\alpha})^\times \otimes b^* (z, \id)^* (\Lc_{\alpha})^{\times,-1} \\
&= (\Lc_{\alpha})^\times (z,z) \otimes (\Lc_{\alpha})^\times (z,b)^{-1} = (\Lc_{\alpha})^\times(z,z).\notag
\end{align*}

We apply \cite[Proposition~7.8]{EdixhovenLido} to give a formula for $\widetilde{j_b}(z)$ when $R \subset \Z_p$. We have that
\begin{align} 
\label{def:jbtilde}
    \widetilde{j_b}(z)   = \prod_{q\mid n} q^{-V_q} (z^* 1) \otimes (b^* 1)^{-1} = (z-b)^* \prod_{q\mid n} q^{-V_q} \in (z-b)^* \calO_X(A_\alpha|_{z \times X})
\end{align}
is a trivializing section over the curve.
The image in $\calN$ is given by
\begin{align}
\label{eq:psijbtilde}
    \psi(\widetilde{j_b}(z)) = h_p(z-b,A_\alpha|_{z \times X}) -\sum_{q\mid n} V_q \log q.
\end{align}
\begin{corollary}
\label{cor:jbtilde}
The function $\Psi \circ \widetilde{j_b}\colon  U(\Z_p) \to \calN$ is given by
\[
z \mapsto ([z-b], [A_\alpha|_{z\times X}], h_p(z-b,A_\alpha|_{z \times X}) -\sum_{q\mid n} V_q \log q).
\]
\end{corollary}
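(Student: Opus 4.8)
The plan is to deduce the corollary directly from the pieces assembled above: the explicit trivializing section $\widetilde{j_b}(z)$ of \eqref{def:jbtilde}, its image $\psi(\widetilde{j_b}(z))$ computed in \eqref{eq:psijbtilde}, and the definition of $\Psi$ (Definition~\ref{def:Psi}) as the projection $\calM^\times(\Z_p)\to J(\Q_p)\times J(\Q_p)$ on the first two factors together with $\psi$ on the third. Thus the proof reduces to identifying, for $z\in U(\Z_p)$, the three coordinates of $\Psi(\widetilde{j_b}(z))$; no convergence or precision input enters, since the assertion is a pointwise identity of functions.

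For the first two coordinates: by \eqref{def:jbtilde} the section $\widetilde{j_b}(z)$ lies in the $\G_m$-torsor $(z-b)^*\calO_X(A_\alpha|_{z\times X})^\times$, and by \cite[Proposition~7.5]{EdixhovenLido} (invoked just before the corollary) this torsor is precisely the fiber of $\calM^\times$ over the pair $\bigl([z-b],[A_\alpha|_{z\times X}]\bigr)$, which lies in $J^0(\Q_p)\times J^0(\Q_p)=J(\Q_p)\times J(\Q_p)$ by Remark~\ref{rem:J0}. Projecting to the base therefore returns these two classes in the stated order. The one point requiring a moment of attention here is reconciling the fiber formula \eqref{eqn:fiber} of $\calM^\times$ with this presentation --- keeping track of which of the divisor $z-b$ and the divisor $A_\alpha|_{z\times X}$ plays the role of ``$E$'' and which the role of the line bundle ``$\calL$'' --- which is exactly what the symmetry of the norm \cite[Section~6.5]{EdixhovenLido} supplies; I expect this bit of bookkeeping to be the only (minor) obstacle.

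For the third coordinate I would substitute directly into the defining rule of $\psi$. Writing the section of \eqref{def:jbtilde} as $E^*\lambda\in E^*\calO_X(D)^\times$ with $E=z-b$, $D=A_\alpha|_{z\times X}$ and $\lambda=\prod_{q\mid n}q^{-V_q}$, the formula $\psi(E^*\lambda)=\log\lambda+h_p(D,E)$ from \eqref{def:calN}, together with $\log\bigl(\prod_{q\mid n}q^{-V_q}\bigr)=-\sum_{q\mid n}V_q\log q$ (each prime $q\mid n$ being invertible in $\Z_p$, as $p$ is of good reduction) and the symmetry of $h_p$, yields $h_p(z-b,A_\alpha|_{z\times X})-\sum_{q\mid n}V_q\log q$; this is \eqref{eq:psijbtilde}. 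Assembling the three coordinates gives the displayed formula for $\Psi\circ\widetilde{j_b}$.
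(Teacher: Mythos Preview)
Your proposal is correct and is essentially the paper's own approach: the corollary is stated without proof precisely because it is an immediate consequence of \eqref{def:jbtilde}, \eqref{eq:psijbtilde}, and Definition~\ref{def:Psi}, which is exactly what you unwind. Your extra care about the ordering of the two factors via the symmetry of the norm is fine but not strictly needed here, since the fiber identification $T_f(j_b(z))=(z-b)^*\calO_X(A_\alpha|_{z\times X})^\times$ already sits over $(j_b(z),\alpha(j_b(z)))=([z-b],[A_\alpha|_{z\times X}])$ by construction.
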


\section{Embedding the curve}
\label{sec:embeddingcurve}

We now describe how to compute the embedding of the curve into the torsor through the evaluation of the trivializing section $\widetilde{j_b}$ on a residue disk of the point $\overline{P} \in U(\F_p)$.
Recall the pseudoparametrization $\phi\colon T(\Z_p)_{\widetilde{j_b}(\overline{P})} \to \Z_p^{g} \times \Q_p^{\rho-1}$ from Definition~\ref{def:paramT}.
Let $\nu$ be a local parameter in the residue disk of the curve above $\overline{P}$. We can parametrize this residue disk by evaluating
\begin{align*}
    \Z_p &\to U(\Z_p)_{\overline{P}},  \hspace{.2in}  \nu \mapsto P_\nu.
\end{align*}
 This is also a parametrization in finite precision, i.e. we have bijections $\Z/p^k\Z \to U(\Z/p^{k+1}\Z)_{\overline{P}}$ for any integer $k \geq 1$.
Define the map $\lambda\colon  \Z_p \to T(\Z_p)_{\widetilde{j_b}(\overline{P})}$ to be the composite of this parametrization $\Z_p\to U(\Z_p)_{\overline{P}}$ and $\widetilde{j_b}$. 
In this section, we show how to apply the following proposition.
\begin{proposition}
\label{prop:embeddingcurve}
The map $\phi \circ \lambda: \Z_p \to \Z_p^{g} \times \Q_p^{\rho-1}$ is given by convergent power series. 

The image $\im(\phi \circ \lambda)$ inside $\im \phi$ is cut out by equations $g_1 =\cdots= g_{g+\rho-2} = 0$ where $g_1,\dots,g_{g+\rho-2} \in \Z_p\langle x_1,\dots,x_{g+\rho-1}\rangle$ are integral convergent power series.     
\end{proposition}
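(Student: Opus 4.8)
The plan is to decompose $\phi \circ \lambda$ through the factorization of $\widetilde{j_b}$ described in Corollary~\ref{cor:jbtilde} and then invoke the analyticity results already established for the pseudoparametrization $\phi$. Concretely, $\lambda = \widetilde{j_b} \circ (\nu \mapsto P_\nu)$, so composing with $\phi$ gives
\[
\phi \circ \lambda = \phi \circ \widetilde{j_b} \circ (\nu \mapsto P_\nu).
\]
The map $\nu \mapsto P_\nu$ parametrizing the residue disk $U(\Z_p)_{\overline P}$ is algebraic, hence given by convergent power series. For the composite with $\widetilde{j_b}$, I would use Corollary~\ref{cor:jbtilde}: the first coordinate of $\Psi \circ \widetilde{j_b}$ is $[z-b]$, whose further composition with $\frac1p\log \xi_1$ is a Coleman integral and hence convergent power series in $\nu$ (this is the standard parametrization of a residue disk of $J$ by abelian integrals); the remaining $\rho-1$ coordinates are $\frac1p\big(\psi(\widetilde{j_b}(P_\nu)) - \psi(\tilde t_i)\big)$, and by \eqref{eq:psijbtilde} we have $\psi(\widetilde{j_b}(P_\nu)) = h_p(P_\nu - b, A_\alpha|_{P_\nu \times X}) - \sum_{q\mid n} V_q \log q$. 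The constant $\sum_q V_q \log q$ contributes nothing to analyticity, and the local height term is analytic in $\nu$ by Lemma~\ref{lemma:heightanalytic} applied to a suitable lift, together with the fact that the assignment $\nu \mapsto (P_\nu - b, A_\alpha|_{P_\nu \times X}) \in \calM^\times(\Z_p)$ is algebraic (both divisors vary algebraically with $\nu$, and $\psi$ only depends on the point of $\calM^\times$). So each of the $g + \rho - 1$ coordinates of $\phi \circ \lambda$ is an integral convergent power series in $\nu$, possibly up to an additive $\Q_p$-constant in the last $\rho-1$ coordinates; this proves the first sentence.

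For the second sentence, I would argue that $\im(\phi\circ\lambda)$ is a one-dimensional analytic subset of $\im\phi \subset \Z_p^g \times \Q_p^{\rho-1}$, cut out by $g + \rho - 2$ equations. The image $\im\phi$ itself is all of $\Z_p^g \times \Q_p^{\rho-1}$ (since $\phi$ is a bijection onto its image, or rather the relevant coordinate projection is), so I need $g + \rho - 2$ convergent power series in the $g + \rho - 1$ ambient coordinates vanishing exactly on the curve $\nu \mapsto \phi(\lambda(\nu))$. Here I would use that the first coordinate function, call it $\nu \mapsto t_1(\nu)$ (the normalized abelian integral in the direction of the first basis differential), is a parameter of the residue disk of $X$ in the sense that it has a convergent power series inverse $\nu = \nu(t_1) = t_1 + O(t_1^2)$: this is because its derivative at $\nu = 0$ is a unit. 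Inverting and substituting back expresses each of the other $g + \rho - 2$ output coordinates $x_2, \dots, x_{g+\rho-1}$ as convergent power series $\eta_j(x_1)$ in the first coordinate $x_1$ alone, and then $g_j \colonequals x_{j} - \eta_j(x_1)$, for $j = 2, \dots, g+\rho-1$, are the desired $g + \rho - 2$ integral convergent power series cutting out the image.

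\textbf{Main obstacle.} The genuinely delicate point is the integrality and convergence of the height coordinates and the inversion step — i.e. verifying that the relevant power series truly lie in $\Z_p\langle x_1,\dots,x_{g+\rho-1}\rangle$ rather than merely $\Q_p\langle \cdots\rangle$, and that the first coordinate is invertible over $\Z_p$ (its linear coefficient must be a unit, which requires $\overline P$ to be a point where the chosen differential does not vanish, or a mild change of basis). The valuation bookkeeping needed here is exactly what Lemma~\ref{lem:paramTf}, Corollary~\ref{lem:paramT}, and Lemma~\ref{lem:our37} were set up to provide: the first $g$ coordinates of $\phi$ have coefficient valuations at least $n - 1 - v_p(n) \geq 0$, so after inverting $t_1$ the composite stays integral. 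I expect the bulk of the write-up to consist of tracking these valuations and confirming that composing and inverting integral convergent power series (which is always legitimate, by definition of the ring) preserves the required integrality, rather than any conceptual difficulty; the analyticity itself is essentially immediate from Corollary~\ref{cor:jbtilde} and Lemma~\ref{lemma:heightanalytic}.
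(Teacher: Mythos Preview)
Your proposal is correct in substance but takes a considerably more hands-on route than the paper. The paper's proof is a two-line citation: the first sentence follows directly from Corollary~\ref{lem:paramT} (applied after noting that $\widetilde{j_b}$, being a morphism of smooth $\Z_p$-schemes, is given in any pair of parametrizations by integral convergent power series, so $\phi\circ\lambda = (\phi\circ\theta^{-1})\circ(\theta\circ\lambda)$ is a composite of convergent power series); the second sentence is obtained by invoking \cite[Corollary~2, III.4.5]{BourbakiCA}, a formal-power-series implicit function theorem that cuts out a smooth one-dimensional subspace by $g+\rho-2$ equations.

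What you do differently is unpack these citations: for the first part you go through the explicit formula of Corollary~\ref{cor:jbtilde} and invoke Lemma~\ref{lemma:heightanalytic} coordinate by coordinate, rather than staying at the level of the pseudoparametrization; for the second part you perform the inversion of one coordinate by hand, which is precisely the content of the Bourbaki reference. Your route has the advantage of making visible exactly where the valuation estimates enter (and you are right that integrality of the $g_i$ is the only place requiring care --- this is absorbed silently in the paper's proof by working inside $\im\phi$ and by the fact that $\widetilde{j_b}$ is a closed immersion of smooth $\Z_p$-schemes, so the cutting-out equations come from a regular system of parameters). The paper's route is shorter and avoids the need to single out a coordinate whose linear term is a unit. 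Your worry about possibly needing a change of basis for the first differential is legitimate for your explicit inversion, but is sidestepped entirely by the Bourbaki formulation, which does not privilege any coordinate.
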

\begin{proof}
This follows from Corollary~\ref{lem:paramT} and \cite[Corollary~2, III.4.5]{BourbakiCA}.
\end{proof}

For actual calculations with the convergent power series $\phi \circ \lambda$, we need to lower bound the valuation of the coefficients.

\begin{proposition}
\label{prop:valuationheight}
Let $\nu$ be a coordinate for $\Z_p$. Consider the $g + \rho - 1$ convergent power series given by $\phi \circ \lambda \colon  \Z_p \to \Z_p^g \times \Q_p^{\rho-1}$. 
For any of the first $g$ convergent power series, the valuation of the coefficient of  $\nu^n$ is at least $n-1-v_p(n)$. For any of the last $\rho-1$ power series, the valuation of the coefficient is at least $n-1- 2\lfloor \log_p n \rfloor+v$, where $v$ is an explicit (possibly negative) constant.
\end{proposition}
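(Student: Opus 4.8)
The plan is to decompose $\phi \circ \lambda$ through the trivial biextension $\calN$ and analyze the valuations of coefficients factor-by-factor, using the estimates already established for each piece. Recall that by Corollary~\ref{cor:jbtilde} the composite $\Psi \circ \widetilde{j_b}$ sends $z$ to $([z-b],[A_\alpha|_{z \times X}], h_p(z-b, A_\alpha|_{z\times X}) - \sum_{q \mid n} V_q \log q)$. Composing with the parametrization $\nu \mapsto P_\nu$ of the curve residue disk, the first $g$ coordinates of $\phi \circ \lambda$ are $(\log \xi_i(\widetilde{j_b}(P_\nu)) - \log \xi_i(\widetilde{j_b}(\overline{P})))/p = (\log j_b(P_\nu) - \log j_b(\overline{P}))/p$, written in a basis of $H^0(J_{\Z_p},\Omega^1)$. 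These are exactly Coleman integrals $\int_{P_0}^{P_\nu} \omega_i$ (up to the factor $1/p$), which are integral convergent power series in $\nu$; the valuation bound $n - 1 - v_p(n)$ for the coefficient of $\nu^n$ follows from Lemma~\ref{lem:our37} applied to $J$ (or Corollary~\ref{lem:paramT}), since the local parameter $\nu$ reduces to a parametrization and the coefficients of $\log$ have valuation $\geq \max(1, n - v_p(n))$, losing one from the division by $p$.

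For the last $\rho - 1$ power series, the $i$-th one is $(\psi(\widetilde{j_{b,f_i}}(P_\nu)) - \psi(\widetilde{j_{b,f_i}}(\overline{P})))/p$, and by \eqref{eq:psijbtilde} this equals $\big(h_p(P_\nu - b, A_{\alpha_i}|_{P_\nu \times X}) - h_p(P_0 - b, A_{\alpha_i}|_{P_0 \times X})\big)/p$, where $P_0$ is our chosen lift $\tilde t$; the constant $-\sum_{q\mid n} V_q \log q$ cancels. So I need to bound the coefficients of the power series expansion in $\nu$ of the Coleman--Gross local height $h_p(P_\nu - b, A_{\alpha_i}|_{P_\nu \times X})$. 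Here I would use the explicit formula for $h_p$ in terms of Coleman integrals (Coleman--Gross \cite[Section~5]{ColemanGross}): writing $A_{\alpha_i}|_{P_\nu \times X}$ as a divisor of degree $0$ depending analytically on $\nu$, the height $h_p(P_\nu - b, A_{\alpha_i}|_{P_\nu \times X})$ is a sum of single and iterated Coleman integrals of forms of the second and third kind, with the integrand and endpoints varying with $\nu$. Each such building block is an integral convergent power series in $\nu$, and I would track the valuations through the composition: the factor $m$ in $\alpha_i = m \cdot \circ \tr_{c_i} \circ f_i$ contributes, the logarithmic form $\omega_{A_{\alpha_i}|_{P_\nu \times X}}$ has poles whose orders and residues are bounded by the degree of $A_{\alpha_i}$ (controlled by $m$ and the correspondence data), and the iterated integral of depth $2$ introduces at worst a factor of $1/2$ type loss together with the composition-of-power-series phenomenon of Remark~\ref{rem:paramparam}, which accounts for the $2\lfloor \log_p n\rfloor$ term (each of the two integrations can drop the valuation by $v_p$ of the relevant index, bounded by $\lfloor \log_p n \rfloor$). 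Bundling all the bounded multiplicative constants — $m$, the degrees and residues of the logarithmic forms, the denominators from the Hodge splitting $W$, and the finitely many primes of bad reduction — into a single explicit constant $v$ gives the stated bound $n - 1 - 2\lfloor \log_p n\rfloor + v$.

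Concretely, the key steps in order are: (1) reduce to bounding coefficients of $h_p(P_\nu - b, A_{\alpha_i}|_{P_\nu \times X})$ as a power series in $\nu$, using Corollary~\ref{cor:jbtilde} and cancellation of the constant term; (2) expand this height via the Coleman--Gross formula into single Coleman integrals $\int \eta$ of a form of the second kind and iterated Coleman double integrals, with both the differential forms and the limits of integration being integral convergent power series in $\nu$; (3) apply Lemma~\ref{lem:our37} / Remark~\ref{rem:paramparam} to the single integral to get the $n - v_p(n)$-type bound, and apply the same reasoning twice, with one extra composition, to the double integral to get the $n - 2\lfloor\log_p n\rfloor$-type bound; (4) collect all the bounded denominators (the $m$, the residues/pole orders of the logarithmic forms controlled by $\deg A_{\alpha_i}$, the matrix entries of $f_i$ on differentials, the contribution of the Hodge complement $W$) into an explicit lower bound $v$ for the valuation of the leading (degree-zero-in-$\nu$-corrected) coefficients, and divide by $p$ to get the final $-1$.

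The main obstacle I expect is step (3)/(4): carefully controlling the valuation loss in the iterated (double) Coleman integral as a function of $\nu$, since both the integrand and the endpoint depend on $\nu$, so one is really composing convergent power series and must invoke the composition bound of Remark~\ref{rem:paramparam} uniformly; and then extracting a genuinely explicit constant $v$ from the Coleman--Gross construction — this requires pinning down how the choice of isotropic Hodge splitting $W$, the denominators in the Tate elliptic-curve-style local expansions, and the degree of the correspondence $A_{\alpha_i}$ enter, which is bookkeeping-heavy but not conceptually deep. Everything else reduces to results already in the excerpt (Lemma~\ref{lem:our37}, Corollary~\ref{lem:paramT}, Lemma~\ref{lemma:heightanalytic}, Corollary~\ref{cor:jbtilde}) together with standard properties of Coleman integration.
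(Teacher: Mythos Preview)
Your treatment of the first $g$ power series matches the paper's: both reduce to Corollary~\ref{lem:paramT} (equivalently Lemma~\ref{lem:our37}).

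For the last $\rho-1$ power series your route diverges from the paper's. The paper does \emph{not} analyze the Coleman--Gross height directly. Instead it invokes \cite[Lemma~4.5]{examplesandalg}, which already gives the bound $n-1-2\lfloor\log_p n\rfloor+v'$ for the Nekov\'a\v{r} local height $h^{\Nek}_{i,p}$ on residue disks, with $v'=\min(\ord_p(\gamma_{\mathrm{Fil}}),c+c_2)$ coming from explicit Hodge/Frobenius data there. The paper then forward-references Section~\ref{sec:comparison} (Theorem~\ref{thm:QCInektoCG} and Proposition~\ref{prop:DbzAalpha}) to identify $h^{\Nek}_{i,p}(z)$ with $-\tfrac{1}{m}\,h_p(z-b,A_{\alpha_i}|_{z\times X})$, whence $v=v'+v_p(m)$. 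So the paper's argument is essentially a two-line citation plus a comparison of height theories; all the hard analytic work is outsourced.

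Your proposal instead tries to extract the $2\lfloor\log_p n\rfloor$ loss from first principles by writing $h_p$ via (iterated) Coleman integrals \`a la Balakrishnan--Besser and tracking valuations through a ``twice-integrated'' composition. This is a legitimate alternative strategy, and if carried out it would make the proof self-contained rather than dependent on the Nekov\'a\v{r} machinery and a forward reference. But your step~(3) is precisely where the content lies, and ``apply Lemma~\ref{lem:our37} twice with one extra composition'' is too coarse: the $2\lfloor\log_p n\rfloor$ in \cite{examplesandalg} comes from a careful analysis of the Frobenius structure on the filtered $\phi$-module, not from a generic double-integration estimate, and the explicit constant $v'$ there is tied to $\gamma_{\mathrm{Fil}}$ and the cup-product constants $c,c_2$ in a way your bookkeeping list (degrees of $A_{\alpha_i}$, residues, matrix of $f_i$) does not obviously reproduce. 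In short, your approach would amount to reproving \cite[Lemma~4.5]{examplesandalg} in Coleman--Gross language, which is feasible but substantially more work than the sketch suggests, and the constant you obtain may not match the paper's $v=v'+v_p(m)$ on the nose.
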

\begin{proof}
    The result about the coefficients of the first $g$ power series follows from Corollary~\ref{lem:paramT}. 
    
    Let $i \in \{1,\dots,\rho-1\}$. Then \cite[Lemma~4.5]{examplesandalg} states that the Nekov\'a\v{r} height $h_{i,p}^{\Nek}: X(\Q_p) \to \Q_p$  corresponding to the trace zero endomorphism $f_i$ is analytic on residue disks. Let $c = \min\{0,\min_j d_j(\eta)\}$ for the $d_j(\eta)$ defined in \cite[Section~4]{examplesandalg}. Furthermore they show that the valuation of the coefficient of $\nu^n$ is at least $n -1 - 2 \lfloor \log_p n\rfloor + v'$, where $v' \colonequals \min(\ord_p(\gamma_{\text{Fil}}), c + c_2)$, and $\gamma_{\text{Fil}}$ and $c_2$ are explicit constants defined in \cite[Section~4]{examplesandalg}, depending on $f_i$ among other things. (The valuation of the coefficients of $\nu^n$ stated in \cite[Lemma~4.5]{examplesandalg} differs by $n$ from the value given here, because our coordinates differ from theirs by a factor of $p$.)

    In Section~\ref{sec:comparison} we go more into detail about this Nekov\'a\v{r} height. In particular, in Theorem~\ref{thm:QCInektoCG} together with Proposition~\ref{prop:DbzAalpha} we show that $h_{i,p}^{\Nek}(z)$ and $h_p(z-b,A_{\alpha_i}|_{z \times X})$ differ by a factor of $-m$. It follows from Corollary~\ref{cor:jbtilde} that we can take $v \colonequals v' + v_p(m)$.
\end{proof}

\begin{remark}
In the example of Section~\ref{sec:example}, we calculate that the constant $v$ is $0$ for the residue disk of the curve we consider there. We suspect that this constant can often be taken to be $0$, at least in the cases $p > 2g-1$ and $p \nmid \# J(\F_p)$.
\end{remark}

We first present a general algorithm to compute the trivializing section $\phi \circ \lambda$. 
For example, if $p >3$ and $v = 0$, to compute $\widetilde{j_b}(P_\nu)$ in $\calN$ modulo $p$, it suffices to compute $\widetilde{j_b}$ on two values, for example $\widetilde{j_b}(P_0) $ and $\widetilde{j_b}(P_1)$. Since the embedding must be linear in $\nu$ on $U(\Z/p^2\Z)_{\overline{P}}$, we can interpolate between these values to determine the map. In general, to compute $\phi \circ \lambda$ to finite precision, it is enough to determine the map on $\Z/p^k\Z$-points for some large enough $k$. We give an algorithm to compute $\widetilde{j_b}(P)$ when $P$ is a $\Z/p^k\Z$-point.

\begin{myalgorithm}[The trivializing section]\hfill\label{alg:jbtilde}
\hfill

\noindent Input: A point $P_\nu \in U(\Z/p^k\Z)_{\overline{P}}$.\\
Output: The value $\phi \circ \lambda(\nu)$ to finite precision.
\begin{enumalg}
\item Calculate the Coleman integral $\log(P_\nu - b)$.
\item Compute $A_{\alpha_i}|_{P_\nu \times X}$ for each $i = 1, \dots, ( \rho - 1)$ using Algorithm~\ref{alg:applyf}. 
\item \label{step:heightscalc} Calculate all $h_p(P_\nu-b,A_{\alpha_i}|_{P_\nu \times X})$.
\item For each $A_{\alpha_i}$, compute $c_{U,i} \colonequals -\sum_{q|n} V_q \log q$ using Algorithm~\ref{alg:calculateVq}, where $n$ is the product of the primes of bad reduction for $X$.
\item Return \[(\phi \circ\lambda)(\nu) = (\log(P_\nu - P_0), h_p(P_\nu-b,A_{\alpha_1}|_{P_\nu \times X}) + c_{U,1},\dots,h_p(P_\nu-b,A_{\alpha_{\rho-1}}|_{P_\nu \times X}) + c_{U,\rho-1}).\]
\end{enumalg}
\end{myalgorithm}

For the rest of this section, we describe a practical algorithm to do Step \eqref{step:heightscalc} of Algorithm~\ref{alg:jbtilde} in the case where $X$ is a hyperelliptic curve of the form $y^2 = H(x)$. For hyperelliptic curves where $H$ has odd degree, there is an algorithm to compute the local Coleman--Gross height at $p$ of two disjoint divisors given as a sum of points \cite[Algorithm~5.7]{BalaBesserIMRN}. Forthcoming work of Gajovi\'{c} extends this algorithm to even degree models. 

For any $i=1,\ldots,(\rho-1)$  since the divisor $A_{\alpha_{i}}|_{P_\nu \times X}$ on $X_{\Q_p}$ may not split as a sum of points, we instead consider multiples of this divisor $n A_{\alpha_{i}}|_{P_\nu \times X}$ for $n \in \N$. We can hope some large enough multiple splits as a sum of points. Therefore, we must explicitly describe arithmetic in the Jacobian.  For hyperelliptic curves, this process can be done via Cantor's algorithm \cite{Cantor}.  The main idea is to use the Mumford representations of divisors.  We use the implementation of Cantor's algorithm done by Sutherland in \cite[Section~3]{SutherlandFastJac}.  The only extra step is to keep track of the function that realizes the linear equivalence with a Mumford representation of the sum. Even though Sutherland works with even degree models for hyperelliptic curves, the algorithms still apply to our odd degree model hyperelliptic curves (see \cite[p.433]{SutherlandFastJac}).

\begin{remark}
In practice, we represent divisors with ideals of polynomial rings.  We can translate from a Gr\"obner basis of an ideal to a Mumford representation in the following way. Let $Y$ be a hyperelliptic curve over a field $k$ given by $y^2 = H(x)$. Let $\pi\colon  Y \to \PP^1$ be the degree two morphism forgetting $y$. Let $D$ be an effective divisor on the affine chart $k[x,y]/(y^2-H(x))$ of $Y$, given by a Gr\"obner basis. We assume that $D$ and $\iota(D)$ are disjoint. Then we can find a Mumford representation for $D$ by simply taking a Gr\"obner basis with respect to the lexicographical ordering $y \leq x$. If $D$ and $\iota D$ are not disjoint, one can explicitly compute an effective divisor $E$ on $\PP^1$ such that $D-\pi^* E$ is disjoint from $\iota(D-\pi^* E)$, and hence find a Mumford representation for $D - \pi^* E$. 
\end{remark}

We can now give a practical algorithm to compute the local heights at $p$ in Step \eqref{step:heightscalc} of Algorithm~\ref{alg:jbtilde}. 
When $X$ is a hyperelliptic curve of the form $y^2 = H(x)$, given $P_\nu \in U(\Z/p^k \Z)$ we can apply Algorithm~\ref{alg:applyf} to obtain $A_{\alpha_i}|_{P_\nu \times X}$ as a divisor on $X_{\Q_p}$.

\begin{myalgorithm}[Local heights for the trivializing section on a hyperelliptic curve]
\label{alg:localheightshyp}
\hfill

\noindent Input: A point $P_\nu \in U(\Z/p^k\Z)_{\overline{P}}$ on a hyperelliptic curve $Y\colon  y^2 = H(x)$ and the Mumford representation of $A_{\alpha_i}|_{P_\nu \times Y}$ as a divisor on $Y$.\\
\noindent Output: The value $h_p(P_\nu -b, A_{\alpha_i}|_{P_\nu \times Y})$ to finite precision.
\begin{enumalg}
\item Set $n \colonequals 1$.
\item \label{step:composereduce} Use Cantor's Algorithm to compute a Mumford representation $(u_n, v_n)$ and a rational function $s_n$ such that $\Div(u_n, v_n) + \Div s_n = nA_{\alpha_i}|_{P_\nu \times Y}$ \cite{Cantor}.
\item Check if $u_n$ factors completely over $\Q_p$ into linear factors.
\item If yes, set $x_j$ to be the roots of $u_n$ for $j = 1, \dots, \deg(u_n)$. If no, increase $n$ by $1$ and go back to Step \eqref{step:composereduce}.
\item  Set $y_j \colonequals v_n(x_j)$.
\item Set $Q_j \colonequals (x_j, y_j) \in Y(\Q_p)$.
\item Compute $h_p(P_\nu - b, \sum_{j=1}^{\deg(u_n)} Q_j - \deg(u_n) \infty)$ using \cite[Algorithm~5.7]{BalaBesserIMRN}.
\item Return  $(1/n)( h_p(P_\nu-b,\sum_{j=1}^{\deg(u_n)} Q_j - \deg(u_n) \infty ) + \log(s_n(P_\nu - b)))$.
\end{enumalg}
\end{myalgorithm}

Algorithm~\ref{alg:localheightshyp} does not always terminate; we cannot guarantee that eventually  $nA_{\alpha_i}|_{P_\nu \times Y}$ splits completely into a sum of points over $\Q_p$. In theory, we can split any divisor as a sum of points over some finite extension of $\Q_p$. However, working with these field extensions of $\Q_p$ is often currently not possible in practice.

\begin{remark}
    Algorithms \ref{alg:jbtilde} and \ref{alg:localheightshyp} take in a point $P_\nu$ of precision $k$, but their output can be of smaller precision. This depends on the precision loss in the computation of the $p$-adic height; see \cite[Section~6.2]{BalaBesserIMRN}. 
\end{remark}

\section{Integer points of the torsor}
\label{sec:integerpoints}

Next we discuss the integer points of the torsor $T$. We give an algorithm to construct a map $\kappa\colon  \Z_p^{r} \to T(\Z_p)_{\widetilde{j_b}(\overline{P})}$ with image exactly $\overline{T(\Z)}_{\widetilde{j_b}(\overline{P})}$. 

In practice, to give an upper bound on $\#U(\Z)_{\overline{P}}$,  we only need to compute the image of the map $\kappa$ in $T(\Z/p^2\Z)_{\widetilde{j_b}(\overline{P})}$, because after composing with the pseudoparametrization $\phi$ from Definition~\ref{def:paramT} the map $\kappa$ is given by convergent power series.  In fact, in this section we will show that by virtue of our choice of pseudoparametrization, they are given by $g$ homogeneous linear polynomials and $\rho -1$ quadratic polynomials.

For now we restrict to a single trace zero endomorphism $f$ and the corresponding torsor $T_f$. By iterating over the linearly independent trace zero endomorphisms $f_1, \dots, f_{\rho - 1}$ we recover $T$ and $\kappa$.

Note that if the residue disk $T_f(\Z)_{\widetilde{j_b}(\overline{P})}$ is empty, then its $p$-adic closure is also empty, and therefore we do not need to consider $\overline{P}$. If the disk is not empty, then we can find $\tilde{t} \in T_f(\Z)_{\widetilde{j_b}(\overline{P})}$ by arithmetic in the Jacobian. It is enough to consider if the corresponding residue disk $J(\Z)_{j_b(\overline{P})}$ is empty. This is an instance of the Mordell--Weil sieve at $p$. 

As an intermediate step, we need to compute points $Q_{ij}$ on $\calN$, the trivial biextension, that are the image under $\Psi$ (defined in Definition~\ref{def:Psi}) of generating sections on certain fibers of $\calM^\times(\Z)$. 

We construct points on $\calN$ that are the image of generating sections of residue disks of $\calM^{\times, \rho-1}(\Z)$ following the method in Example~\ref{example:addpointsM}.

\begin{myalgorithm}[Compute the $Q_{ij}$]
\label{alg:Qij}
\hfill
\noindent Input:  $G_1, \dots, G_{r'}$ a generating set of the Mordell--Weil group of $J$, a trace zero endomorphism $f\colon  J \to J$.\\
Output: Points $Q_{ij}$ on $\calN$ that are the image of the generating section of $\calM^\times(G_i, f(G_j))(\Z)$ and $Q_{i0}$ that are the image of the generating section of $\calM^\times(G_i, c)(\Z)$ for $1 \leq i, j \leq r'$.
\begin{enumalg}
  \item Compute $E_1, \dots, E_{r'}$ representing divisors of $G_1, \dots, G_{r'}$.
  \item For each $G_i$, use Algorithm~\ref{alg:applyf} to compute representing divisors $D_1, \dots, D_{r'}$ of $f(G_i)$.
  \item Use Algorithm~\ref{alg:computec} to compute a divisor $D_0$ whose class is the point $c \in J(\Z)$.
  \item Compute the local height $h_p(E_i, D_j)$  and $h_p(E_i, D_0)$ for $1 \leq i , j \leq r'$.
  \item Using \cite[Section~2]{vonBommelHolmesMuller}, compute the height $h_\ell(E_i, D_j)$ at $\ell \neq p$ and $h_\ell(E_i, D_0)$ at $\ell \neq p$  for $1 \leq i , j \leq r'$.
  \item Return $Q_{ij} \colonequals (G_i, f(G_j), \sum_{\ell \text{ prime} } h_\ell(E_i, D_j)) $  and  $Q_{i0} \colonequals (G_i,c, \sum_{\ell \text{ prime} } h_\ell(E_i, D_0)) $ for $1 \leq i , j \leq r'$.
\end{enumalg}

\end{myalgorithm}

Let $G_1, \dots, G_{r'}$ be a generating set for the full Mordell--Weil group, with $r'\geq r$.
Let $\widetilde{G_i}$  be a basis for the kernel of reduction $J(\Z) \to J(\F_p)$ for $i = 1, \dots, r$. (Note that the reduction map is injective when restricted to the torsion of $J(\Z)$, so the kernel of reduction is a free $\Z$-module of rank $r$.)
Write
\begin{align*}
    \widetilde{G_i} = \sum_{j=1}^{r'} e_{ij} G_j
\end{align*}
for some $e_{ij} \in \Z$.
Let $\widetilde{G_t}$ denote the projection of $\tilde{t} \in T(\Z)_{\widetilde{j_b}(\overline{P})}$ to $J_{j_b(\overline{P})}$. 
Write 
\begin{align*}
   \widetilde{G_{t}} = \sum_{i=1}^{r'} e_{0i} G_i
\end{align*}
for some $e_{0i} \in \Z$.
Using the biextension group laws and the points $Q_{ij}$ we construct a series of points in $\calM^\times(\Z)$ living over certain points in $J \times J$ that are the image of generating sections of the corresponding residue disks in $\calM^\times(\Z)$.

A formula for the points $P_{ij}$ over $(\widetilde{G_i}, f(m\widetilde{G_j}))$ is
\begin{align}
\label{eqn:Pij}
    P_{ij} \colonequals \sideset{}{_1}\sum_{k=1}^{r'} e_{ik} \cdot_1\left( \sideset{}{_2}\sum_{\ell=1}^{r'} m\cdot_2e_{j\ell} \cdot_2  Q_{k \ell} \right).
\end{align}
Here, $\cdot_i$ and $\sum_i$ for $i = 1, 2$ denote the biextension group laws \eqref{eqn:tensor1onM} and \eqref{eqn:tensor2onM}.

Next $R_{i\tilde{t}}$ live over $(\widetilde{G_i},\alpha(\widetilde{G_t}))$ and hence
\begin{align}
\label{eqn:Rit}
    R_{i\tilde{t}} \colonequals \sideset{}{_1}\sum_{k=1}^{r'} e_{ik} \cdot_1 \left( m\cdot_2Q_{k 0} +_2  \sideset{}{_2}\sum_{\ell=1}^{r'} m\cdot_2e_{0 \ell}\cdot_2 Q_{k\ell}   \right).
\end{align}

Finally, $S_{\tilde{t} j}$ live over $(\widetilde{G_t},  f(m\widetilde{G_j})) $ and so 
\begin{align}
\label{eqn:Stj}
    S_{\tilde{t} j} \colonequals \sideset{}{_1}\sum_{k=1}^{r'} e_{0k} \cdot_1 \left( \sideset{}{_2}\sum_{\ell=1}^{r'} m\cdot_2e_{j \ell}\cdot_2 Q_{k\ell}   \right).
\end{align}

\begin{remark}
In $\calM^\times(\Z)$, these points are all unique up to sign. Since we are recording the image in $\calN$, this sign does not matter.
\end{remark}

For $n=(n_1, \dots,  n_r) \in \Z^r$ we can now construct the points  $A_{\tilde{t}}(n)$, $B_{\tilde{t}}(n)$, $C(n)$, and $D_{\tilde{t}}(n)$ in $T(\Z)$ given by \cite[(4.2)-(4.4)]{EdixhovenLido}. The key property of this construction is that $D_{\tilde{t}}(n)$ lies above the point $\widetilde{G_t} + \sum_i n_i \widetilde{G_i} \in J(\Z)_{j_b(\ol{P})}$. Furthermore, by \cite[(4.6)-(4.9)]{EdixhovenLido}, we have that $D_{\tilde{t}}((p-1)n)$ is in the residue disk $T_f(\Z)_{\widetilde{j_b}(\overline{P})}$, allowing us to explicitly construct the map
\begin{align}\label{eqn:kappaExplicit}
    \kappa_{f,\Z}\colon \Z^r \to T_f(\Z)_{\widetilde{j_b}(\overline{P})}, \hspace{.2in} (n_1,\ldots, n_r) \mapsto D_{\tilde{t}}((p-1)n_1, \dots,  (p-1)n_r),
\end{align}

Finally, by \cite[Theorem~4.10]{EdixhovenLido}, the map $\kappa_{f,\Z}$ extends uniquely to a continuous map 
\begin{align}
\label{def:kappaf}
    \kappa_f\colon \Z_p^{r} \to T_f(\Z_p)_{\widetilde{j_b}(\overline{P})}.
\end{align}
The image of $\kappa_f$ is $\overline{T_f(\Z)}_{\widetilde{j_b}(\overline{P})}$.

By iterating over the basis $f_1, \dots, f_{\rho -1}$ of trace zero endomorphisms, we obtain the map
\begin{align}
\label{def:kappaz}
    \kappa_\Z \colon \Z^{r} \to T(\Z_p)_{\widetilde{j_b}(\overline{P})}
\end{align}
and its unique extension to a continuous map
\begin{align}
\label{def:kappa}
    \kappa\colon \Z_p^{r} \to T(\Z_p)_{\widetilde{j_b}(\overline{P})}.
\end{align}
The map $\kappa$ has image $\overline{T(\Z)}_{\widetilde{j_b}(\overline{P})}$. 

Recall the pseudoparametrization $\phi\colon T(\Z_p)_{\widetilde{j_b}(\overline{P})} \to \Z_p^{g} \times \Q_p^{\rho - 1}$ from Definition~\ref{def:paramT}.

\begin{proposition}
\label{prop:phicirckappa}
The map $\phi \circ \kappa\colon \Z_p^r \to \Z_p^{g} \times \Q_p^{\rho-1}$ is given by $g$ homogeneous linear polynomials and $\rho-1$ polynomials of degree at most $2$.
\end{proposition}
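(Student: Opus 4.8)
The plan is to unwind the definition of $\phi \circ \kappa$ coordinate-by-coordinate, using the explicit description of $\kappa$ as a composite of biextension group laws. Recall from Definition~\ref{def:paramT} that $\phi = (\phi^{(0)}, \phi^{(1)}, \dots, \phi^{(\rho-1)})$, where $\phi^{(0)}(z) = (\log \xi(z) - \log \xi(\tilde t))/p$ records $g$ coordinates coming from the logarithm on $J$, and each $\phi^{(i)}(z_i) = (\psi(z_i) - \psi(\tilde t_i))/p$ records a single coordinate coming from the map $\psi$ into $\calN$. The strategy is to analyze $\phi^{(0)} \circ \kappa$ and the $\phi^{(i)} \circ \kappa$ separately, since they have genuinely different shapes: the first is linear because $\log$ is a homomorphism, while the others are quadratic because the trivial biextension $\calN$ carries a nontrivial \emph{quadratic} structure in its middle coordinate coming from the interaction of the two partial group laws with $\alpha = m\cdot \circ \tr_c \circ f$.

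First I would treat $\phi^{(0)} \circ \kappa$. The structure morphism $\xi\colon T \to J$ sends $\kappa(n)$ to the point $\widetilde{G_t} + \sum_i n_i \widetilde{G_i}$ of $J(\Z)_{j_b(\overline P)}$ (this is precisely the key property of the $D_{\tilde t}(n)$ construction, $(p-1)$-scaled as in \eqref{eqn:kappaExplicit}), so $\log \xi(\kappa(n)) = \log \widetilde{G_t} + (p-1)\sum_i n_i \log \widetilde{G_i}$ since $\log$ is a group homomorphism on $J(\Z_p)_0$. Subtracting $\log \xi(\tilde t) = \log \widetilde{G_t}$ and dividing by $p$ gives $\phi^{(0)}(\kappa(n)) = \frac{p-1}{p}\sum_i n_i \log \widetilde{G_i}$, which is homogeneous linear in $n = (n_1,\dots,n_r)$ with coefficients in $\Z_p^g$ (note $\frac{p-1}{p}\log \widetilde{G_i}$ has entries in $\Z_p$ because $\log \widetilde{G_i} \in p\Z_p^g$). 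This accounts for the $g$ homogeneous linear polynomials.

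Next I would treat each $\phi^{(i)} \circ \kappa$. Here the point is that $\psi$, by Proposition~\ref{prop:calN}, is a \emph{morphism of biextensions} from $\calM^\times(\Z_p)$ to the trivial biextension $\calN$, so it converts the two partial group laws $\tensor_1, \tensor_2$ into ordinary addition in the last coordinate. Tracking the explicit formula for $D_{\tilde t}(n)$ from \cite[(4.2)--(4.4)]{EdixhovenLido} through $\psi$: the construction of $D_{\tilde t}(n)$ builds the $T$-point over $\widetilde{G_t} + \sum_i n_i \widetilde{G_i}$ by combining the points $P_{ij}$ over $(\widetilde{G_i}, f(m\widetilde{G_j}))$, the points $R_{i\tilde t}$, and $S_{\tilde t j}$ via $\cdot_1$ (linear in the "curve" index) and $\cdot_2$ (linear in the $\alpha$-image index), and since $\alpha(\widetilde{G_t} + \sum_i n_i \widetilde{G_i})$ is itself linear in $n$ as a point of $J^0$ (after applying $m\cdot \circ f$; the $\tr_c$ contributes the $R$-terms), the combined point is indexed \emph{bilinearly} — hence after applying $\psi$ the last coordinate is a sum of terms each bilinear in $n$, i.e.\ a polynomial of degree at most $2$ in $n$. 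Concretely the coefficients are built from the values $\psi(Q_{k\ell})$, $\psi(Q_{k0})$ computed in Algorithm~\ref{alg:Qij}, together with the integer matrices $(e_{ik})$, $(e_{0\ell})$. Subtracting $\psi(\tilde t_i)$ and dividing by $p$ preserves the degree. This gives the $\rho - 1$ polynomials of degree at most $2$.

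The main obstacle is bookkeeping: one must carefully expand the nested sums $\sum_1, \sum_2$ in \eqref{eqn:Pij}--\eqref{eqn:Stj} and in the $A_{\tilde t}, B_{\tilde t}, C, D_{\tilde t}$ construction of \cite[(4.2)--(4.4)]{EdixhovenLido}, verify that each $\cdot_1$ or $\cdot_2$ contributes \emph{at most} linearly to the degree in $n$, and confirm that there is no cross term that raises the degree above $2$ — this uses crucially the compatibility of the two partial group laws (the "order-independence" identity) and the fact that $\psi$ respects both of them. One should also check that the output indeed lands in $\Z_p^g \times \Q_p^{\rho-1}$ with the claimed integrality in the first $g$ coordinates, which follows as above from $\log \widetilde{G_i} \in p\Z_p^g$. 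No genuinely new idea is needed beyond what is already set up; the content is entirely in organizing the biextension arithmetic.
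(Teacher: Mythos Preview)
Your proposal is correct and follows essentially the same approach as the paper: both treat the first $g$ coordinates via the homomorphism property of $\log$, and both treat the last $\rho-1$ coordinates by tracing the biextension construction of $D_{\tilde t}(n)$ from \cite[(4.2)--(4.4)]{EdixhovenLido} through $\psi$, obtaining a bilinear expression in $n$ and its image under the (affine-)linear map induced by $f$. The only point the paper makes explicit that you leave implicit is the reduction to $\kappa_\Z$ by continuity (a polynomial identity on $\Z^r$ extends to $\Z_p^r$), which you should state once at the outset.
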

\begin{proof}
It is enough to show this for $\kappa_\Z$, since $\varphi\circ\kappa$ is continuous.

We make the identification $J(\Z)_{{j_b}(\overline{P})} = D_0 + J(\Z)_0 \simeq D_0 + \Z^r \simeq \Z^r$ where $D_0 \in J(\Z)_{{j_b}(\overline{P})}$. Under this bijection, the map $\phi \circ \kappa_\Z\colon J(\Z)_{\widetilde{j_b}(\overline{P})} \to \Z_p^{g} \times \Q_p^{\rho-1}$ is given by 
\[
D \mapsto \log(D - D_0),
\]
on the first $g$ components.  Since $\log$ is a group homomorphism, it follows the first $g$ polynomials are homogeneous linear as desired.

Now we fix one of the $\rho-1$ trace zero endomorphisms $f\colon  J \to J$. Let $\pi_f\colon  \Z_p^{g} \times \Q_p^{\rho-1} \to \Q_p$ be the projection onto the coefficient corresponding to $f$. Consider the map $\tau \coloneqq \pi_f \circ \phi \circ \kappa_\Z$. We write $F$ for the affine linear map \[\Z^r \simeq J(\Z)_{j_b(\overline{P})} \xrightarrow{f} J(\Z)_{\alpha(j_b(\overline{P}))} \simeq \Z^r\] where we identify $J(\Z)_{\alpha(j_b(\overline{P}))}$ with $\Z^r$ by subtracting $\alpha(D_0)$ and use $J(\Z)_0 \simeq \Z^r$.

By \cite[(4.2)-(4.4)]{EdixhovenLido} we have that $\tau(n_1,\dots,n_r)$ is a sum of a constant term, a linear function in the integers $n_1,\ldots,n_r$, a linear function in $Fn$ and a bilinear form evaluated in $(n,Fn)$. Since $F$ is linear, in total, this gives a function of degree at most $2$ in $n$.
\end{proof}

\section{The geometric quadratic Chabauty algorithm}
\label{sec:upperbound}

In this section, we present the main algorithm of this paper for doing geometric quadratic Chabauty. This algorithm ties together the results of the previous sections.

\begin{myalgorithm}[Geometric quadratic Chabauty in a single disk]
\label{alg:mainGQCalg}
\hfill

\noindent Input: 
\begin{itemize}
\item $X_\Q/\Q$ a smooth, projective, geometrically irreducible curve over $\Q$ such that $X_\Q(\Q) \neq \varnothing$ with a regular model $X$ of genus $g$ and Mordell--Weil rank $r$, and with Jacobian of N\'eron--Severi rank $\rho>1$, such that $r<g+ \rho - 1$;
\item $\rho - 1$ nontrivial independent trace zero endomorphisms represented by $(g \times g)$-matrices giving the action on the sheaf of differentials with respect to a fixed basis;
\item  an open set $U \subset X^\sm$ containing the smooth points of one geometrically irreducible component of $X_{\F_q}$ for all primes $q$;
\item  a prime $p>2$ of good reduction for $X$;
\item  a precision $k \in \N$;
\item  a base point $b \in X(\Z)$;
\item  a point $\overline{P}\in U(\F_p)$;
\item  a generating set $G_1,\ldots, G_{r'}$ of the Mordell--Weil group of $J$.
\end{itemize}
\noindent Output: $g+\rho-2$ integral convergent power series in $\Z_p\langle z_1,\dots,z_r \rangle$ up to precision $k$, defining $\widetilde{j_b}(U(\Z_p)_{\overline{P}}) \cap \overline{T(\Z)}$ inside $\overline{T(\Z)}$.

For each of the given trace zero endomorphisms $f$ do Steps 2 through 5.
\begin{enumalg}
    \item For each of the given trace zero endomorphisms $f$ do the following.
    \begin{enumalg}
    \item Compute the correspondence $A_\alpha$ that induces the endomorphism $m \cdot \circ f\colon J\to J$ as given in Lemma~\ref{lemma:A-alpha}. 
    \item Find the divisor representing $c=[(\calL_\alpha)_{b\times X}]\in J(\Z)$ using Algorithm~\ref{alg:computec}.
    \item Choose a local parameter $\nu$ to parametrize $U(\Z_p)_{\overline{P}}$ as $\nu\mapsto P_\nu$. By Proposition~\ref{prop:valuationheight} the map $\nu \mapsto \varphi\circ\lambda(\nu)$ is modulo $p^k$ given by a polynomial with bounded degree.  By calculating enough values, interpolate to find the polynomial expression. In particular, when $v=0$ and $p>3$, for $k=1$, the degree bound is $1$. In this case, compute $\varphi\circ\lambda(0), \varphi\circ\lambda(1)$ and interpolate the resulting line.
    \item With the generating set $G_1,\ldots, G_{r'}$, use Algorithm~\ref{alg:Qij} to compute points $Q_{ij}, Q_{i0} \in\calN$ up to precision $k$ that are the images of the generating sections of $\calM^\times(G_i, f(G_j))(\Z)$ and $\calM^\times(G_i,c)(\Z)$ for $1\le i,j\le r'$.
    \item \label{alg:step:kappaf} Using the elements $Q_{ij}$, find the map $\kappa_{f, \Z}\colon \Z^{r}\to T_f(\Z)_{\widetilde{j_b}(\overline{P})}$ as in \eqref{eqn:kappaExplicit} and extend it to the map $\kappa_f \colon  \Z_p^r \to T_f(\Z_p)_{\widetilde{j_b}(\overline{P})}$.
    \end{enumalg}
    \item Using the $\kappa_f$ for $f = f_1, \dots, f_{\rho-1}$ constructed in Step \eqref{alg:step:kappaf}, construct $\kappa$ as in \eqref{def:kappa}.
    \item Compose with the pseudoparametrization $\phi$ to compute the $g$ homogeneous linear and $\rho-1$ quadratic polynomials describing $\phi \circ \kappa\colon  \Z_p^r \to \Z_p^{g} \times \Q_p^{ \rho - 1}$, as guaranteed by Proposition~\ref{prop:phicirckappa}, up to precision $k$.
    \item Use Hensel lifting to compute the power series $g_1, \dots, g_{g+\rho-2}$ defined in Proposition~\ref{prop:embeddingcurve} that cut out $\im (\phi \circ \lambda)$, up to precision $k$.
    \item Return $g_i \circ (\phi \circ \kappa)$ for $i = 1, \dots, g+ \rho-2$.
\end{enumalg}
\end{myalgorithm}

By iterating this over all simple opens $U$ (as in Section~\ref{sec:overview}), and also iterating over all $\F_p$-points of $U$, we obtain multivariate power series up to precision $k$ cutting out $X(\Z_p)_{\Geo}$. 

\begin{remark}
By \cite[Section~9.2]{EdixhovenLido}, the power series in the output of Algorithm~\ref{alg:mainGQCalg} have at most finitely many zeros in $\Z_p$. In practice, one can solve these power series up to enough precision by using a multivariate Hensel's lemma \cite[Theorem~25]{KuhlmannHensel}. This assumes that the Jacobian matrix of the sequence of power series is invertible over $\Q_p$. We expect this to always happen unless there is a geometric obstruction.

Often solving these power series modulo $p$ is enough to determine $X(\Z_p)_{\Geo}$. See for example \cite[Theorem~4.12]{EdixhovenLido}, which we use in Section~\ref{sec:example}.
Even if computations modulo $p$ are not enough, one can increase the precision by considering the residue disks $U(\Z_p)_{\overline{P}}$, where $\overline{P} \in U(\Z/p^k\Z)$ for some integer $k$. An example of the geometric Chabauty method with higher precision is given in Remark~\ref{rem:gccubic}. 
\end{remark}

\begin{remark}
In practice, to run Algorithm~\ref{alg:mainGQCalg} we need to be able to compute Coleman--Gross heights on the curve $X$. Currently, this has only been made algorithmic for hyperelliptic curves.
\end{remark}

\section{The comparison theorem}
\label{sec:comparison}

In this section we give a comparison theorem between the geometric method and the cohomological quadratic Chabauty of  \cite{QCI,QCII,QCCartan,examplesandalg}. In Theorem~\ref{thm:comparison}, we show that the geometric method produces a refined set of points, as is the case for classical Chabauty--Coleman \cite{geometricLinearChab}. 

For this section we assume that $p$ is a prime of good reduction, that $r = g$, that $\rho > 1$, and further, that $\overline{J(\Z)}$ has finite index in $J(\Z_p)$. The cohomological quadratic Chabauty set in \cite{QCI} is defined under these assumptions. 
We do not require a semistable model for $X/\Q_q$, $q|n$ as is sometimes assumed; a semistable model can make explicit calculations of heights away from $p$ easier, see \cite{BettsDogra} or \cite[Section~3.1]{examplesandalg}. By \cite[Lemma~6.1.1]{BettsEffective} the local heights away from $p$ factor through the component set of the minimal regular model.

Let $Z_1, \dots, Z_{\rho - 1}$ be a basis for $\ker(\NS(J) \to \NS(X))$. In the cohomological method, from the transpose $Z_i^\top$ of such a correspondence\footnote{Due to a difference of conventions of rigidifications for line bundles on $X \times X$, we have to take the transpose of $Z_i$ for the methods to align perfectly. The transpose $Z_i^\top$ induces the same endomorphism of the Jacobian.} we can construct a quadratic Chabauty function $\sigma_i\colon  X(\Q_p) \to \Q_p$ and a finite subset $\Omega_i \subset \Q_p$ described explicitly in terms of local heights at primes of bad reduction such that $\sigma_i(z) \in \Omega_i$ for all $z \in X(\Q)$. This finite subset $\Omega_i$ consists of one constant $c_{U,i}$ for every simple open $U$.

We describe the construction of $\sigma_i$ and the set $\Omega_i$ in more detail after we present the main theorem.
The divisor $Z_i$ is the correspondence of a trace zero endomorphism $f_i\colon  J \to J$ of the Jacobian. In the geometric method, we work with the endomorphism $ \alpha_i \coloneqq m \cdot \circ  \tr_{c_i }  \circ f_i$. This multiplication with $m$ will result in all the heights in the trivial biextension $\calN$ to be a factor $m$ larger than in the cohomological case.

\begin{definition}
\label{def:qcset}
Define $X(\Q_p)_{\Coh} \coloneqq \bigcup_{U} \{x \in X(\Q_p) \mid  \sigma_i(x) = c_{U,i}, \text{ for } i = 1, \dots, \rho -1\}$ where the union is over all simple opens $U$.
\end{definition}

\begin{remark}
\label{rem:definitionqcoh}
As far as we know, the existing literature does not explicitly define the quadratic Chabauty set in the case of multiple endomorphisms. In the case where one uses a single trace zero endomorphism, the set is defined in \cite[Theorem~1.2]{QCI}.
One can see Definition~\ref{def:qcset} as a special case of the finite set implicitly defined in \cite[Theorem~A]{BettsEffective}, for the quotient of the fundamental group that is an extension of the abelianization by $\Q_p(1)^{\rho-1}$.

The alternative definition is $\bigcap_i \bigcup_U \{x \in X(\Q_p) \mid \sigma_i(x) = c_{U,i}\}$. Here the union and the intersection have been switched, and hence the resulting set can be bigger. The difference between the two sets consists exactly of points $x \in X(\Q_p)$ such that $\sigma_i(x) \in \Omega_i$ for every $i$, but such that there is no $U$ with $\sigma_i(x) = c_{U,i}$ for every $i$. In particular, the points in the difference do not lie in any of the simple opens $U$, and hence are not rational points.
\end{remark}

Recall the definition of $X(\Z_p)_{\GQC}$ from Definition~\ref{def:xzpgeo}. Given a covering of $X(\Z)$ by simple opens $U$ we have that 
\[X(\Z_p)_{\GQC} \colonequals \bigcup_{U}  \widetilde{j_b}^*(\widetilde{j_b}(U(\Z_p)) \cap \overline{T(\Z)}) \subset \bigcup_{U} U(\Z_p) = X(\Z_p). \]

The following definitions give terminology for two of the cases in which $X(\Q_p)_{\Coh}$ is strictly bigger than $X(\Z_p)_{\GQC}$. 
\begin{definition}
\label{def:goodreduction}
We say that the Mordell--Weil group is \defi{of good reduction} (modulo $p$) if the map $\overline{J(\Z)}_{0}/p\overline{J(\Z)}_0 \to J(\Z/p^2\Z)_{0}$ is injective. Otherwise, we say that it is \defi{of bad reduction}.
\end{definition}

The Mordell--Weil group being of good reduction is equivalent to the map $\ol{J(\Z)}_0 \to J(\Z_p)_0$ being an isomorphism. On the level of abstract groups, this map is always an embedding $\Z_p^g \to \Z_p^g$ with image of index some power of $p$. Another equivalent way of stating this is that the 
\defi{$p$-saturation of $\ol{J(\Z)}_0$ in $J(\Z_p)_0$} \[\{x \in J(\Z_p)_0 \mid \exists k, p^k x \in \ol{J(\Z)}_0\}\]  is always equal to $J(\Z_p)_0$, and the Mordell--Weil group is of bad reduction if and only if this $p$-saturation is bigger than $\ol{J(\Z)}_0$.

\begin{definition}
For $Q \in X(\F_p)$, if $j_b(Q)$ is not in the image of the reduction map $J(\Z) \to J(\F_p)$, then we say $Q$ \defi{fails the Mordell--Weil sieve} (at $p$). In this case, the residue disk $X(\Z_p)_Q$ cannot contain a rational point. Otherwise, $Q$ \defi{passes the Mordell--Weil sieve} (at $p$).
\end{definition}

Our main theorem is the following comparison theorem.
\begin{theorem}
\label{thm:comparison}
There is an inclusion $X(\Q) \subseteq X(\Z_p)_{\GQC} \subseteq X(\Q_p)_{\CQC}$. For $P \in X(\Q_p)_{\CQC}$ we have $P \not \in X(\Z_p)_{\GQC}$ if and only if one of the following conditions holds:
\begin{enumerate}
  \item \label{failMWsieve} $P$ fails the Mordell--Weil sieve at $p$;
  \item \label{badreduction} the Mordell--Weil group is of bad reduction at $p$ and $j_b(P)$ does not lie in the $p$-adic closure of the Mordell--Weil group, but only in its $p$-saturation. 
\end{enumerate}
\end{theorem}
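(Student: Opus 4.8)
The plan is to work one residue disk at a time and to transport both sides of the claimed inclusion through the pseudoparametrization $\phi$ of Definition~\ref{def:paramT}, turning each into a statement about $j_b(P)\in J(\Q_p)$ and about the $p$-adic heights attached to $P$. Since $p$ is a prime of good reduction, every simple open $U\subset X^\sm$ restricts to all of $X_{\Z_p}$, so $U(\Z_p)=X(\Z_p)=X(\Q_p)$ and every $\overline P\in X(\F_p)$ lies in every $U(\F_p)$. Fixing $\overline P$ and $P\in X(\Q_p)$ reducing to it, we thus have $P\in X(\Z_p)_{\GQC}$ iff $\widetilde{j_b}(P)\in\overline{T(\Z)}$ for the trivializing section attached to \emph{some} simple open $U$ (Definition~\ref{def:xzpgeo}), and $P\in X(\Q_p)_{\CQC}$ iff $\sigma_i(P)=c_{U,i}$ for all $i$ for \emph{some} $U$ (Definition~\ref{def:qcset}). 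If $\overline P$ fails the Mordell--Weil sieve then $T(\Z)_{\widetilde{j_b}(\overline P)}=\emptyset$, hence $\overline{T(\Z)}_{\widetilde{j_b}(\overline P)}=\emptyset$ and no point of the disk lies in $X(\Z_p)_{\GQC}$, which is case~\eqref{failMWsieve}; so assume $\overline P$ passes the sieve, fix an integral lift $\tilde t\in T(\Z)_{\widetilde{j_b}(\overline P)}$, and apply $\phi$, turning the geometric condition into $\phi(\widetilde{j_b}(P))\in\im(\phi\circ\kappa)$ with $\kappa\colon\Z_p^r\to T(\Z_p)_{\widetilde{j_b}(\overline P)}$ as in \eqref{def:kappa}. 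I would then split $\Z_p^g\times\Q_p^{\rho-1}$ into its first $g$ coordinates (the ``Coleman integral part'') and its last $\rho-1$ coordinates (the ``height part'') and handle them separately.

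For the Coleman integral part, Proposition~\ref{prop:phicirckappa} and its proof show that the first $g$ coordinates of $\phi\circ\kappa$ form a homogeneous linear injection $\Z_p^r\to\Z_p^g$ with image $\tfrac1p\log\overline{J(\Z)}_0$, while the first $g$ coordinates of $\phi(\widetilde{j_b}(P))$ equal $\tfrac1p(\log j_b(P)-\log\xi(\tilde t))$ because $\xi(\widetilde{j_b}(P))=j_b(P)$; since $\log$ is injective on residue disks of $J(\Z_p)$ for $p>2$, these match iff $j_b(P)\in\xi(\tilde t)+\overline{J(\Z)}_0=\overline{J(\Z)}_{j_b(\overline P)}$, i.e. iff $j_b(P)\in\overline{J(\Z)}$. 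Assuming this, let $w\in\Z_p^r$ be the unique preimage with $\xi(\kappa(w))=j_b(P)$. By \eqref{eq:psijbtilde} (equivalently Corollary~\ref{cor:jbtilde}), the $i$-th height coordinate of $\phi(\widetilde{j_b}(P))$ is $\tfrac1p\bigl(h_p(P-b,A_{\alpha_i}|_{P\times X})-\sum_{q\mid n}V_q^{(i)}\log q-\psi(\tilde t_i)\bigr)$, with $V_q^{(i)}$ depending on the chosen $U$; and by Proposition~\ref{prop:intpointscalN} applied to the integer points assembling $\kappa_\Z$, together with continuity of $\psi$ (Lemma~\ref{lemma:heightanalytic}) and of the global $p$-adic height $h$ (extended to $J(\Q_p)$ using the finite-index hypothesis and bilinearity), the $i$-th height coordinate of $\phi(\kappa(w))$ equals $\tfrac1p\bigl(h(j_b(P),\alpha_i(j_b(P)))-\psi(\tilde t_i)\bigr)$. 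So the height parts agree iff $h_p(P-b,A_{\alpha_i}|_{P\times X})-h(j_b(P),\alpha_i(j_b(P)))=\sum_{q\mid n}V_q^{(i)}\log q$ for every $i$.

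The crux, and the step I expect to be the main obstacle, is to recognize this identity as exactly the condition ``$\sigma_i(P)=c_{U,i}$'' of the cohomological method. This needs the dictionary between the biextension (Coleman--Gross) heights and the quadratic Chabauty data: Theorem~\ref{thm:QCInektoCG} together with Proposition~\ref{prop:DbzAalpha} should identify $h_p(P-b,A_{\alpha_i}|_{P\times X})$ with $-m\,h_{i,p}^{\Nek}(P)$; identify, after dividing by $-m$ and using bilinearity of $h$ to absorb the linear contribution of $c_i$ coming from $\alpha_i=m\cdot\circ\tr_{c_i}\circ f_i$, the global term $h(j_b(P),\alpha_i(j_b(P)))$ with the global part of $\sigma_i$; and identify $\sum_{q\mid n}V_q^{(i)}\log q$ with $-c_{U,i}$, so that $\{\sum_{q\mid n}V_q^{(i),U}\log q\}_U$ matches $\Omega_i$. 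Careful bookkeeping of the factor $-m$, of the various signs, and of the transpose-of-$Z_i$ convention is the delicate part. Granting this, we obtain: $P\in X(\Z_p)_{\GQC}$ iff $j_b(P)\in\overline{J(\Z)}$ \emph{and} $P\in X(\Q_p)_{\CQC}$.

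It remains to assemble the statement. A rational point $P\in X(\Q)=X^\sm(\Z)$ lies in $U(\Z)$ for its simple open, so $\widetilde{j_b}(P)\in T(\Z)\subseteq\overline{T(\Z)}$ and $X(\Q)\subseteq X(\Z_p)_{\GQC}$ (this is also \cite[Section~9.2]{EdixhovenLido}); $X(\Z_p)_{\GQC}\subseteq X(\Q_p)_{\CQC}$ is immediate from the equivalence above. For $P\in X(\Q_p)_{\CQC}$ that equivalence gives $P\notin X(\Z_p)_{\GQC}\iff j_b(P)\notin\overline{J(\Z)}$, and a case analysis finishes: if $j_b(\overline P)$ is not in the image of $J(\Z)\to J(\F_p)$ then $\overline P$ fails the Mordell--Weil sieve, case~\eqref{failMWsieve}; otherwise, choosing a rational representative $R$ of $j_b(\overline P)$, one has $j_b(P)-R\in J(\Z_p)_0$ but $j_b(P)-R\notin\overline{J(\Z)}_0$, and since $\overline{J(\Z)}_0$ has finite index in $J(\Z_p)_0$ its $p$-saturation there is all of $J(\Z_p)_0$, so $j_b(P)$ lies in the $p$-saturation of $\overline{J(\Z)}$ but not in $\overline{J(\Z)}$ and $\overline{J(\Z)}_0\neq J(\Z_p)_0$, meaning the Mordell--Weil group is of bad reduction at $p$ by Definition~\ref{def:goodreduction}, case~\eqref{badreduction}; the reverse implication in each case is clear.
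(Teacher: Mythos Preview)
Your argument is correct and is in substance the same as the paper's; the paper merely repackages it. Instead of splitting $\phi$ into a ``Coleman integral part'' and a ``height part'', the paper introduces the single function $\rho_{\calN}\colon\calN\to\Q_p$, $(D_1,D_2,x)\mapsto h(D_1,D_2)-x$, shows it vanishes on $\Psi(\calM^\times(\Z))$ (Lemma~\ref{lem:rhonvanish}, which is exactly your use of Proposition~\ref{prop:intpointscalN} on $\kappa_\Z$ extended by continuity), proves that its pullback along $\Psi\circ\widetilde{j_b}$ equals $-m(\sigma_i-c_{U,i})$ (Lemma~\ref{lem:pullbackrhon}, which is your ``dictionary'' step), and then characterizes $Z(\rho_\calN)\setminus\Psi(\overline{\calM^\times(\Z)})$ in one stroke (Lemma~\ref{lem:diffzarclos}, which is your Coleman-integral-part analysis). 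Your height identity $h_p(P-b,A_{\alpha_i}|_{P\times X})-h(j_b(P),\alpha_i(j_b(P)))=\sum_q V_q^{(i)}\log q$ is literally the equation $\rho_\calN(\Psi(\widetilde{j_b}(P)))=0$. The $\rho_\calN$ framing avoids having to fix $\tilde t$ and work residue-disk-by-residue-disk, but the content is identical.

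Two small bookkeeping corrections: $\sum_{q\mid n}V_q^{(i)}\log q$ equals $m\,c_{U,i}$, not $-c_{U,i}$ (see the remark following Definition~\ref{def:cui}); and there is no separate ``linear contribution of $c_i$'' to absorb, since Proposition~\ref{prop:DbzAalpha} already gives $[A_{\alpha_i}|_{z\times X}]=\alpha_i(j_b(z))$ with the translation by $c_i$ built in, so $h(j_b(P),\alpha_i(j_b(P)))=-m\,h_i^{\Nek}(P)$ directly.
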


\begin{remark}\label{rem:manyEndomorphisms}
It follows immediately from the proof of Theorem~\ref{thm:comparison} that the inclusion $X(\Q) \subseteq X(\Z_p)_{\GQC} \subseteq X(\Q_p)_{\CQC}$ and comparison from Theorem \ref{thm:comparison} also hold when the sets $X(\Z_p)_{\GQC}$ and $X(\Q_p)_{\CQC}$ are constructed using a fixed subset $Z_{i_1}, \dots, Z_{i_k}$ of $1 \leq k < \rho -1$ independent elements of $\ker(\NS(J) \to \NS(X))$, instead of a full basis.
\end{remark}

\begin{remark}
In \cite{geometricLinearChab}, an analogous theorem is given for the comparison between the classical Chabauty--Coleman method, as in \cite{EffectiveChab, BBK}, and the geometric linear Chabauty, as developed in \cite{PimThesis} and \cite{geometricLinearChab}. The comparison theorem \cite[Theorem~4.1]{geometricLinearChab} shows that the set of candidates found by the classical Chabauty--Coleman method contains the set found by geometric linear Chabauty method. Furthermore, the two sets differ by conditions analogous to ~\eqref{failMWsieve} and \eqref{badreduction}.
\end{remark}

Let $1 \leq i \leq \rho(J) - 1$.
We briefly recall the constructions of $\sigma_i$ and $ \Omega_i$ from \cite{examplesandalg}. For more details, the reader can also consult \cite{QCI,QCCartan}. The cohomological method for quadratic Chabauty uses Nekov\'a\v{r}'s theory \cite{NekovarHeights} of $p$-adic heights of certain Galois representations to construct a global height $h_i^{\Nek}\colon X(\Q) \to \Q_p$ by attaching a family of Galois representations to $X(\Q)$ and $X(\Q_p)$. The Galois representation depends on the choice of base point $b$ as well as the correspondence $Z_i$. We suppress this dependence on $b$ in our notation. The global height also depends on a choice of splitting of the Hodge filtration and id\`ele class character, which we choose to be compatible with the choices made to construct the Coleman--Gross height $h$. In particular we choose the cyclotomic character.
This global height $h_i^{\Nek}$ factors through $h^{\Nek}\colon J(\Q) \times J(\Q) \to \Q_p$ \cite[Section~2.3]{examplesandalg}. We can thus extend $h^{\Nek}$ on $J(\Q) \times J(\Q)$ to a bilinear function on $J(\Q_p) \times J(\Q_p) \to \Q_p$ and evaluate it on elements of $X(\Q_p)$.

This global height decomposes as a sum of local heights over finite places
\[ h_i^{\Nek} = \sum_v h_{i,v}^{\Nek}\] where $h_{i,v}^{\Nek} \colon X(\Q_v) \to \Q_p$.
Define the quadratic Chabauty function
\[\sigma_i (z) \colonequals h_i^{\Nek}(z) - h_{i,p}^{\Nek}(z)\]
 for $z \in X(\Q_p)$, recalling that the right hand side implicitly depends on $Z_i$. 
 Then, for any $z \in X(\Q)$, using the decomposition above we can write $h_i^{\Nek}(z) = h_{i,p}^{\Nek}(z) + \sum_{q \neq p} h_{i,q}^{\Nek}(z)$. 
 The set $\Omega_i \subset \Q_p$ is defined by the local heights in the following way. Let
\begin{align*}
\Omega_{i, q} \colonequals \{ h_{i,q}^{\Nek}(z)  \mid z \in X(\Q_q) \}.
\end{align*}
If $X_{\F_q}$ is geometrically irreducible, then $\Omega_{i,q} = \{0\}$.
We can therefore define the finite set
\begin{align}
\label{eq:omegai}
\Omega_i \colonequals \{ \sum_q w_q \mid w_q \in \Omega_{i,q} \} ,
\end{align} 
 Hence, when $z \in X(\Q)$, we have $\sigma_i(z) \in \Omega_i$ and so $X(\Q_p)_{\Coh} \supseteq X(\Q)$.

\begin{remark}
The function $\sigma_i(z)$ is locally analytic \cite[pp. 6, 10]{examplesandalg}. If $X$ has sufficiently many rational points, then one can explicitly express the function $\sigma_i(z)$ as a power series in every residue disk, and for each $c \in \Omega_i$ and each residue disk of $X(\Q_p)$ find the roots of $\sigma_i(z) - c$ to explicitly solve for elements of $X(\Q_p)_{\Coh}$. 
\end{remark}

The following theorem relates the local height of the Galois representation associated to a point $P \in X(\Q_p)$ to a pairing with a divisor that is studied in \cite{DarmonRotgerSols}.
\begin{definition}
Let $z \neq b$ be a point in $X(\Q_p)$.
Define $D_{Z_i^\top}(z, b)$ to be the degree zero divisor on $X$ given by $D_{Z_i^\top}(z, b) \colonequals {Z_i}|_{\Delta} - {Z_i}|_{X \times b} - {Z_i}|_{z \times X} $. 
\end{definition}

\begin{theorem}{\cite[Theorem~6.3]{QCI}} 
\label{thm:QCInektoCG}
Let $q$ be a prime and let $z \neq b$ be a point in $X(\Q_p)$.
We have the equality of local heights $h^{\Nek}_{i,q}(z)  =h_{q} (z-b, D_{Z_i^\top}(z,b))$ and moreover $h_i^{\Nek}(z)  =h (z-b, D_{Z_i^\top}(z,b))$ where $h$ is the Coleman--Gross height.
\end{theorem}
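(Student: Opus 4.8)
The statement is \cite[Theorem~6.3]{QCI}, and the plan I would follow is the one in that reference. The strategy is to identify the mixed extension of $p$-adic Galois representations whose Nekov\'a\v{r} height computes $h_i^{\Nek}(z)$ with the biextension mixed extension attached to the pair of degree-zero divisors $\big(z-b,\ D_{Z_i^\top}(z,b)\big)$, and then to invoke the known comparison between Nekov\'a\v{r}'s $p$-adic height pairing on $J(\Q)\times J^\dual(\Q)$ and the Coleman--Gross height.

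\textbf{Step 1: unwind the Nekov\'a\v{r} construction.} I would first recall that the trace-zero correspondence $Z_i^\top$ determines a two-step unipotent quotient $U$ of $\pi_1^{\et}(X_{\Qbar},b)\otimes\Q_p$, fitting in a central extension $0\to\Q_p(1)\to U\to V_pJ\to 0$ whose class is governed by $Z_i^\top$; this is where trace zero enters, since it forces the relevant piece of $\HH^1$ to vanish, so that the mixed extension is controlled by its graded quotients together with a cup-product term. For $z\in X(\Q)$, pushing the path torsor $\pi_1^{\et}(X_{\Qbar};b,z)$ along $U$ yields a mixed extension $M_z$, i.e.\ a Galois cohomology class whose two graded extension classes are the Kummer class of $[z-b]\in J$ and a class in $\Ext^1(V_pJ,\Q_p(1))$ built from $Z_i^\top$. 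By definition $h_i^{\Nek}(z)$ is Nekov\'a\v{r}'s $p$-adic height \cite{NekovarHeights} of $M_z$, for the cyclotomic id\`ele class character and the chosen splitting of the Hodge filtration.

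\textbf{Step 2: identify $M_z$ with a biextension class.} Using the description of the endomorphism induced by a correspondence, $[x-y]\mapsto\calO_X(Z_i^\top|_{x\times X}-Z_i^\top|_{y\times X})$, together with the diagonal and base-point corrections, I would check that $[D_{Z_i^\top}(z,b)]$ corresponds under $j_b^*$ to $f_i([z-b])\in J^\dual(\Q)$. The heart of the argument is then to verify that $M_z$ coincides with the mixed extension obtained from the Poincar\'e biextension evaluated at $\big([z-b],[D_{Z_i^\top}(z,b)]\big)$; equivalently, that the fundamental-group construction, intrinsic to $X$, agrees with the abelian construction on $J$. I expect this to be the main obstacle: it requires matching the cup-product term in $M_z$ with the biextension group law, and it is precisely the computation carried out in \cite{QCI}, building on the divisor pairing of \cite{DarmonRotgerSols}.

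\textbf{Step 3: apply the Nekov\'a\v{r}--Coleman--Gross comparison.} Once $M_z$ is identified with the biextension class of $\big(z-b,\,D_{Z_i^\top}(z,b)\big)$, its Nekov\'a\v{r} height is, by construction, the Nekov\'a\v{r} $p$-adic height of the pair $\big([z-b],[D_{Z_i^\top}(z,b)]\big)\in J(\Q)\times J^\dual(\Q)$. By the theorem comparing Nekov\'a\v{r}'s height with the Coleman--Gross height for the cyclotomic character and a splitting of the Hodge filtration \cite{ColemanGross} (see \cite{BesserHeightsVologodsky} for the general formulation), this equals $h(z-b,D_{Z_i^\top}(z,b))$, which gives the global statement. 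For the local statement, I would match the canonical local decompositions place by place: Nekov\'a\v{r}'s local height at $q$ is computed from the local condition at $q$, and this coincides with the Coleman--Gross local height $h_q$ --- for $q\neq p$ via intersection theory on a regular model of $X_{\Q_q}$, and for $q=p$ via the Hodge filtration and $p$-adic integration. Since both global heights are sums of these local terms over the same set of places and the summands agree, the local equalities $h^{\Nek}_{i,q}(z)=h_q(z-b,D_{Z_i^\top}(z,b))$ follow.
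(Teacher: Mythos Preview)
The paper does not give its own proof of this statement: it is stated as a citation of \cite[Theorem~6.3]{QCI} and no argument is supplied. Your proposal correctly recognizes this and outlines the strategy of the cited reference (identify the mixed extension attached to $z$ with the biextension class of $(z-b,D_{Z_i^\top}(z,b))$ and then invoke the Nekov\'a\v{r}--Coleman--Gross comparison), which is exactly what one would do; there is nothing further to compare against in this paper.
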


\begin{proposition}
\label{prop:DbzAalpha}
Let $z \in X(\Z_p)$ be such that $z\neq b$. We have $-mD_{Z_i^\top}(z,b) = A_{\alpha_i}|_{z \times X}$ and $-m[D_{Z_i^\top}(z,b)] = [\alpha_i(z-b)]$.
\end{proposition}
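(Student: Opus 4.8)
Fix the index $i$ and write $f=f_i$, $Z=Z_i$, $A_\alpha=A_{\alpha_i}$, $\alpha=\alpha_i$. The plan is to prove first the equality of \emph{divisors} $-mD_{Z^\top}(z,b)=A_\alpha|_{z\times X}$, and then to deduce the equality of classes by passing to $\Pic^0(X)$ and identifying the resulting class with the point $\alpha(j_b(z))$.

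For the divisor equality I would reduce both sides to a common normal form. By Algorithm~\ref{alg:Aalpha}, $A_\alpha=m\bigl(D'_f-B\times X+X^\sm\times B-X^\sm\times C\bigr)$ with $B=D'_f|_{X^\sm\times b}$ and $C=D'_f|_{\Delta}$, where $D'_f$ is a correspondence inducing $f$; restricting to the slice $\{z\}\times X$ the vertical term $B\times X$ restricts to $0$ while $X^\sm\times B$ and $X^\sm\times C$ restrict to $B$ and $C$, so
\[
A_\alpha|_{z\times X}=m\bigl(D'_f|_{z\times X}+D'_f|_{X^\sm\times b}-D'_f|_{\Delta}\bigr).
\]
On the other side the definition of $D_{Z^\top}(z,b)$ gives directly $-mD_{Z^\top}(z,b)=m\bigl(Z|_{z\times X}+Z|_{X\times b}-Z|_{\Delta}\bigr)$. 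Writing $\Phi(D):=D|_{z\times X}+D|_{X\times b}-D|_{\Delta}$ (restriction to first coordinate $z$, to second coordinate $b$, and to the diagonal), the claim becomes $\Phi(D'_f)=\Phi(Z)$. The key observation is that $\Phi$ is additive and vanishes on fibral divisors: $\Phi(P\times X)=0+P-P=0$ and $\Phi(X\times Q)=Q+0-Q=0$. By \cite[Theorem~3.4.7]{BSmith} any two correspondences inducing the same endomorphism of $J$ differ by a fibral divisor, and $D'_f$ and $Z$ both induce $f$ (the transpose $Z^\top$, which is what the subscript of $D_{Z^\top}$ refers to, induces the same endomorphism, so it is immaterial which of $Z$, $Z^\top$ we use here). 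Choosing the two correspondences so that their difference is fibral and supported away from $z$, $b$, and $\Delta$ — which is harmless, and which is also what makes $A_\alpha$ well defined, by the uniqueness in Lemma~\ref{lemma:A-alpha} — we get $\Phi(D'_f)=\Phi(Z)$ with all restrictions honest divisors.

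For the equality of classes, taking classes in the divisor equality gives $-m[D_{Z^\top}(z,b)]=[A_\alpha|_{z\times X}]$, so it suffices to identify $[A_\alpha|_{z\times X}]$ with $\alpha(j_b(z))=[\alpha(z-b)]$. This is immediate from Corollary~\ref{cor:jbtilde}: the trivialising section $\widetilde{j_{b,f}}(z)$ lies in the fibre of $T_f=(\id,\alpha)^*\calM^\times$ over $j_b(z)$, hence its image in $\calM^\times$ lies over $\bigl(j_b(z),\alpha(j_b(z))\bigr)$, and since by Definition~\ref{def:Psi} the first two coordinates of $\Psi$ are the structure map to $J\times J^0$, Corollary~\ref{cor:jbtilde} records these coordinates as $\bigl([z-b],[A_\alpha|_{z\times X}]\bigr)$; comparing gives the claim (this is where $z\neq b$ enters, as in the definition of $D_{Z^\top}$). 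Alternatively one can argue by hand: part~\ref{condition:endo} of Lemma~\ref{lemma:A-alpha} says $A_\alpha$ induces $m\cdot\circ f$, so $A_f:=A_\alpha/m$ induces $f$ and $f([z-b])=[A_f|_{z\times X}]-[A_f|_{b\times X}]$; combined with the definition of $c$ from Algorithm~\ref{alg:computec} and $\alpha=m\cdot\circ\tr_c\circ f$ this yields $\alpha(j_b(z))=m[A_f|_{z\times X}]=[A_\alpha|_{z\times X}]$.

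The main obstacle is bookkeeping rather than anything conceptual: keeping track of which projection each restriction is taken along, reconciling the orientation and rigidification conventions of \cite{EdixhovenLido}, of \cite{RigorousEndo} (used to build $A_\alpha$) and of the cohomological method (which uses the transpose $Z^\top$), and handling the $X^\sm$-versus-$X$ and general-position technicalities so that the fibral-invariance step above is airtight. Once the functional $\Phi$ and its vanishing on fibral divisors are in place, the remainder is a direct verification.
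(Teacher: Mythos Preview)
Your argument is correct and rests on the same key fact as the paper's, namely the uniqueness of $A_\alpha$ in Lemma~\ref{lemma:A-alpha}. The paper exploits this more directly: rather than working with an unspecified $D_f'$ and then arguing $\Phi(D_f')=\Phi(Z_i)$ via fibral invariance, it simply runs the construction of Algorithm~\ref{alg:Aalpha} with $D_f'=Z_i$ itself, defining $A=Z_i-B\times X+X\times B-X\times C$ with $B=Z_i|_{X\times b}$ and $C=Z_i|_\Delta$, so that $A|_{z\times X}=Z_i|_{z\times X}+B-C=-D_{Z_i^\top}(z,b)$ in one line, and then invoking uniqueness to conclude $mA=A_{\alpha_i}$. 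This sidesteps your $\Phi$-invariance step and the attendant general-position caveats you flag at the end. Your explicit verification of the class equality $[A_\alpha|_{z\times X}]=\alpha(j_b(z))$ is not spelled out in the paper (which writes only ``the proposition follows''), so that part of your proposal is a useful addition.
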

\begin{proof}
Write $B = {Z_i}|_{X \times b}$ and $C = {Z_i}|_\Delta$. Then \[D_{Z_i^\top}(z,b) = C - B - {Z_i}|_{z \times X}.\]  Define $A = Z_i - B \times X + X \times B - X \times C$. Then we see $A|_{z \times X} = {Z_i}|_{z \times X} + B - C = -D_{Z_i^\top}(z,b)$. Then by Lemma~\ref{lemma:A-alpha}, $mA$ is equal to $A_{\alpha_i}$ and the proposition follows. 
\end{proof}

\begin{definition}
Define $\rho_\calN\colon \calN \to \Q_p$ by $(D_1,D_2,x) \mapsto h(D_1,D_2) - x$.
\end{definition}
Note that $\rho_{\calN}$ does not depend on $Z_i$.

\begin{lemma}
\label{lem:rhonvanish}
The function $\rho_\calN$ vanishes on the image $\Psi(\calM^\times(\Z))$ in $\calN$, and in particular, on $\Psi(T_i(\Z))$.
\end{lemma}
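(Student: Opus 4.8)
The statement is essentially a reformulation of Proposition~\ref{prop:intpointscalN}, and the plan is to make that reduction explicit. First I would take an arbitrary $z\in\calM^\times(\Z)$ lying over $([D],[y])\in(J\times J^0)(\Z)$, and choose a representing divisor $E$ for the generic-fibre class of $[y]$ whose support is disjoint from that of $D$ over $\Z_{(p)}$ (possible after moving $D$ within its linear-equivalence class). Letting $F$ be the vertical divisor with $E+F$ of multidegree $0$ on every special fibre, one checks $[E+F]=[y]$ in $J^0(\Z)$ (they have the same generic fibre and $J^0(\Z)\hookrightarrow J(\Z)=J_\Q(\Q)$), so $z$ lies over $([D],[E+F])$ and Proposition~\ref{prop:intpointscalN} applies: $\psi(z)=h([D],[y])$, where $h$ is the global $p$-adic height appearing in the definition of $\rho_\calN$.

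Next I would unwind the definitions of $\Psi$ and $\rho_\calN$. Using Remark~\ref{rem:J0} to identify $J^0(\Z_p)=J(\Z_p)$, the point $\Psi(z)\in\calN$ is $([D],[y],\psi(z))$, whence
\[\rho_\calN(\Psi(z))=h([D],[y])-\psi(z)=0.\]
As $z$ was arbitrary, $\rho_\calN$ vanishes on $\Psi(\calM^\times(\Z))$. The final clause follows because $T_i=(\id,\alpha_i)^*\calM^\times$, so the monomorphism $T_i\to\calM^\times$ (suppressed in our notation) sends $T_i(\Z)$ into $\calM^\times(\Z)$ compatibly with $\Psi$, giving $\Psi(T_i(\Z))\subseteq\Psi(\calM^\times(\Z))$.

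I expect no genuine obstacle here: the mathematical content sits entirely in Proposition~\ref{prop:intpointscalN}, and this lemma just repackages it. The only delicate points are bookkeeping, namely that every integer point of $\calM^\times$ meets the hypotheses of that proposition for some admissible choice of representing divisors disjoint over $\Z_{(p)}$, and that passing from $E$ to $E+F$ does not change the class in $J^0(\Z)$ because $F$ is vertical, so that the $\calN$-coordinate of $\Psi(z)$ is exactly $[y]$ and the two occurrences of $h$ cancel.
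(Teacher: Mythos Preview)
Your proposal is correct and follows exactly the paper's approach: the paper's proof is the single sentence ``This follows from Proposition~\ref{prop:intpointscalN},'' and you have simply unpacked that reference by checking that an arbitrary $z\in\calM^\times(\Z)$ satisfies the hypotheses of that proposition and that the resulting identity $\psi(z)=h([D],[y])$ is precisely what makes $\rho_\calN(\Psi(z))$ vanish. One tiny elision worth making explicit in a write-up: Proposition~\ref{prop:intpointscalN} literally gives $\psi(z)=h([D],[E])$, and you silently identify this with $h([D],[y])$ because the global height depends only on classes in $J(\Q)$ and $[E]_\Q=[y]_\Q$.
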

\begin{proof}
This follows from Proposition~\ref{prop:intpointscalN}.
\end{proof}

In order to characterize the difference between $X(\Z_p)_{\Geo}$ and $X(\Q_p)_{\Coh}$, we will use the following lemma. 

\begin{lemma}
\label{lem:diffzarclos}
The difference $Z(\rho_{\calN}) \setminus\Psi(\overline{\calM^\times(\Z)})$ consists of all the points $(D,E,x)$ with $D,E \in J(\Z_p)$ such that
\begin{enumerate}
\item $D$ or $E$ fail the Mordell--Weil sieve, or;
\item the Mordell--Weil group is of bad reduction, and at least one of $D$ or $E$ does not lie in the $p$-adic closure $\overline{J(\Z)}$ of the Mordell--Weil group, and only lies in its $p$-saturation.
\end{enumerate}
\end{lemma}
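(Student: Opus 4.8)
\textbf{Proof plan for Lemma~\ref{lem:diffzarclos}.}

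The plan is to reduce the statement to a concrete claim about the image of $J(\Z)$ inside $J(\Z_p) \times J(\Z_p)$, using that $\Psi$ is essentially the identity on the first two factors and that $\rho_\calN$ vanishes on $\Psi(\calM^\times(\Z))$ by Lemma~\ref{lem:rhonvanish}. First I would analyze the zero set $Z(\rho_\calN)$: by definition $(D,E,x) \in Z(\rho_\calN)$ iff $x = h(D,E)$, where $h$ is the (biextended, $\Q_p$-bilinear) Coleman--Gross global height. So $Z(\rho_\calN)$ is precisely the graph $\{(D,E,h(D,E)) \mid D,E \in J(\Z_p)\}$, a copy of $J(\Z_p) \times J(\Z_p)$. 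On the other hand, I would identify $\Psi(\overline{\calM^\times(\Z)})$: since the fiber of $\calM^\times$ over each integral point of $J \times J^0$ is a $\{\pm 1\}$-torsor and $\Psi$ kills the sign (as $\log(-1)=0$), the image $\Psi(\calM^\times(\Z))$ is exactly $\{(D,E,h(D,E)) \mid D \in J(\Z),\ E \in J^0(\Z)\}$ by Proposition~\ref{prop:intpointscalN} (taking the vertical correction divisor $F$ into account, which does not change the class). Taking $p$-adic closures, $\Psi(\overline{\calM^\times(\Z)})$ is the closure of this set; since $h$ is continuous (indeed $p$-adic analytic, being built from Coleman integrals and analytic local heights), this closure is $\{(D,E,h(D,E)) \mid D \in \overline{J(\Z)},\ E \in \overline{J^0(\Z)}\}$, where the bars denote $p$-adic closures in $J(\Z_p)$ and $J^0(\Z_p)$ respectively. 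Under the good-reduction hypothesis of this section, $J^0 = J$ over $\Z_{(p)}$, so both closures equal $\overline{J(\Z)}$.

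Given these two identifications, the difference $Z(\rho_\calN) \setminus \Psi(\overline{\calM^\times(\Z)})$ is $\{(D,E,h(D,E)) \mid D,E \in J(\Z_p),\ \text{and}\ D \notin \overline{J(\Z)}\ \text{or}\ E \notin \overline{J(\Z)}\}$. So it remains to show that $D \in J(\Z_p)$ fails to lie in $\overline{J(\Z)}$ if and only if either $D$ fails the Mordell--Weil sieve at $p$, or the Mordell--Weil group is of bad reduction and $D$ lies in the $p$-saturation of $\overline{J(\Z)}$ but not in $\overline{J(\Z)}$ itself. This is a statement purely about the inclusion $\overline{J(\Z)} \hookrightarrow J(\Z_p)$. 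The argument: the reduction map $J(\Z_p) \to J(\F_p)$ has kernel $J(\Z_p)_0 \cong \Z_p^g$, and the image of $J(\Z) \to J(\F_p)$ is the subgroup whose $p$-adic closure in $J(\Z_p)$ determines which residue classes are hit; $D$ fails the Mordell--Weil sieve exactly when $\overline{D} \in J(\F_p)$ is not in the image of $J(\Z)$, which is one reason $D \notin \overline{J(\Z)}$. If $D$ does reduce into the image, we may translate by an integral point so as to reduce to the residue disk of $0$, where $\overline{J(\Z)}_0 \hookrightarrow J(\Z_p)_0 \cong \Z_p^g$ is an inclusion of free $\Z_p$-modules of full rank $g$ (here $r=g$), with finite index a power of $p$. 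Then $D \in J(\Z_p)_0$ lies outside $\overline{J(\Z)}_0$ precisely when that index is $>1$ --- i.e.\ the Mordell--Weil group is of bad reduction --- and $D$ lands in the gap; since $\overline{J(\Z)}_0$ and its $p$-saturation have the same $\Q_p$-span, any such $D$ automatically lies in the $p$-saturation. Conversely any $D$ in the $p$-saturation but not in $\overline{J(\Z)}_0$ witnesses bad reduction. Assembling the residue-disk and $0$-disk analyses gives exactly the two listed conditions.

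I expect the main obstacle to be the identification of $\Psi(\overline{\calM^\times(\Z)})$ with $\{(D,E,h(D,E)) \mid D,E \in \overline{J(\Z)}\}$ --- that is, commuting the closure with $\Psi$ and the height pairing. The ``$\subseteq$'' direction needs continuity of $h$ on $J(\Z_p) \times J(\Z_p)$, which follows from its construction via Coleman integrals and analytic local height functions. The ``$\supseteq$'' direction needs that every pair in the closure is hit, which uses that $\Psi$ is a continuous surjection from $\calM^\times(\Z_p)$ onto $\calN$ restricted to $J(\Z_p) \times J(\Z_p)$ lifting the identity on the base, together with Proposition~\ref{prop:intpointscalN} identifying the integral fibers; one must be slightly careful that $\overline{\calM^\times(\Z)}$ is taken inside $\calM^\times(\Z_p)$ and that its projection to the base is exactly $\overline{J(\Z)} \times \overline{J(\Z)}$ (using $J^0 = J$ over $\Z_{(p)}$ and $\G_m(\Z) = \{\pm 1\}$ contributing nothing in the limit). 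Everything else is bookkeeping with the structure of $\overline{J(\Z)} \subseteq J(\Z_p)$, which is standard once phrased in terms of the Mordell--Weil sieve and $p$-saturation.
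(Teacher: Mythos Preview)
Your proposal is correct and follows essentially the same route as the paper's proof: identify $Z(\rho_\calN)$ with the graph of $h$ over $J(\Z_p)\times J(\Z_p)$, identify $\Psi(\overline{\calM^\times(\Z)})$ with the graph over $\overline{J(\Z)}\times\overline{J(\Z)}$, and then analyze $J(\Z_p)\setminus\overline{J(\Z)}$ using the finite-index assumption to conclude that the $p$-saturation of $\overline{J(\Z)}_0$ is all of $J(\Z_p)_0$. The paper's version is terser and simply asserts the two bijections, whereas you spell out the continuity argument needed to commute the closure with $\Psi$; that extra care is warranted, and your treatment of the $J^0$ versus $J$ distinction is more explicit than the paper's, which silently writes $J(\Z)\times J(\Z)$.
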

\begin{proof}
Note that $Z(\rho_{\calN})$ is in bijection with $J(\Z_p) \times J(\Z_p)$. In contrast, $\Psi(\calM^\times(\Z))$ is in bijection with $J(\Z) \times J(\Z)$. By assumption, $\overline{J(\Z)}_0 \subset J(\Z_p)_0$ is a finite index $\Z_p$-sub-module, and therefore has $p$-saturation $J(\Z_p)_0$. Hence $J(\Z_p) \setminus \overline{J(\Z)}$ consists exactly of points failing the Mordell--Weil sieve and points that only lie in the $p$-saturation of $\overline{J(\Z)}$ and not in $\overline{J(\Z)}$ itself.
This can only happen if $\overline{J(\Z)_0}$ is a proper subgroup of $J(\Z_p)_0 \simeq \Z_p^g$. A finite index $\Z_p$-submodule $G \subset \Z_p^g$ is a proper subgroup if and only if after tensoring with $\F_p$ the induced map $G/pG \to \F_p^g$ is not an isomorphism. This is equivalent to $G/pG \to \F_p^g$ not being injective. So the second condition can only happen if the Mordell--Weil group is of bad reduction.
\end{proof} 

\begin{definition}
\label{def:cui}
Let $U \subset X^{\sm}$ be a simple open set of $X^{\sm}$. Define $c_{U,i} \in \Omega_i \subset \Q_p$ to be $\sum_{q \neq p} h_q(z_q-b, D_{Z_i^\top}(z_q,b))$ for any $z_q \in U(\Z_q)$ with $z_q\ne b$.
\end{definition}

\begin{remark}
By Lemma~\ref{lem:Vq}, this is well defined and by this lemma as well as Proposition~\ref{prop:DbzAalpha} we have that $m c_{U,i}$ is equal to $\sum_q V_q \log q$, with $V_q$ as defined in Definition~\ref{def:Vq}. Hence by \eqref{eq:psijbtilde} we have that $\psi \circ \widetilde{j_b}\colon U(\Z_p) \to \Q_p$ is given by $z \mapsto h_p(z-b,A_\alpha|_{z \times X}) - mc_{U,i}$ and $(\Psi \circ \widetilde{j_b})(z) = (z-b,A_{\alpha}|_{z \times X}, h_p(z-b,A_\alpha|_{z \times X}) - mc_{U,i})$.
\end{remark}

\begin{lemma}
\label{lem:pullbackrhon}
The function $-m(\sigma_i(z) - c_{U,i})$ is the pullback along $\Psi \circ \widetilde{j_b}|_{U}\colon U(\Z_p) \to \calN$ of $\rho_{\calN}$.
\end{lemma}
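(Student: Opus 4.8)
The plan is simply to unwind both sides of the asserted identity and match them term by term; the whole content is a chain of substitutions, so I expect no genuine obstacle, only careful bookkeeping of the several normalizations in play.

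First I would record $\Psi \circ \widetilde{j_b}$ explicitly. By Corollary~\ref{cor:jbtilde}, together with the Remark following Definition~\ref{def:cui} which identifies $\sum_{q \mid n} V_q \log q$ with $m\,c_{U,i}$, we have, for $z \in U(\Z_p)$ with $z \neq b$,
\[
(\Psi \circ \widetilde{j_b})(z) = \bigl(\,[z-b],\ [A_{\alpha_i}|_{z \times X}],\ h_p(z-b,\, A_{\alpha_i}|_{z \times X}) - m\,c_{U,i}\,\bigr),
\]
and hence, by the definition of $\rho_\calN$,
\[
\rho_\calN\bigl((\Psi \circ \widetilde{j_b})(z)\bigr) = h\bigl(z-b,\ A_{\alpha_i}|_{z \times X}\bigr) - h_p\bigl(z-b,\ A_{\alpha_i}|_{z \times X}\bigr) + m\,c_{U,i},
\]
where $h$ is the global Coleman--Gross height and $h_p$ its component at $p$.

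Next I would apply Proposition~\ref{prop:DbzAalpha}, which gives $A_{\alpha_i}|_{z \times X} = -m\,D_{Z_i^\top}(z,b)$ as divisors (hence as classes); since $A_{\alpha_i}|_{z \times X}$ is disjoint from $z-b$ by construction, so is $D_{Z_i^\top}(z,b)$, so all heights below are defined. By biadditivity of $h$ and of $h_p$, each of the first two terms above is $-m$ times the corresponding pairing against $D_{Z_i^\top}(z,b)$, and Theorem~\ref{thm:QCInektoCG} identifies those pairings with the Nekov\'a\v{r} heights: $h(z-b, D_{Z_i^\top}(z,b)) = h_i^{\Nek}(z)$ and $h_p(z-b, D_{Z_i^\top}(z,b)) = h_{i,p}^{\Nek}(z)$. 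Substituting and using $\sigma_i(z) = h_i^{\Nek}(z) - h_{i,p}^{\Nek}(z)$ gives
\[
\rho_\calN\bigl((\Psi \circ \widetilde{j_b})(z)\bigr) = -m\,h_i^{\Nek}(z) + m\,h_{i,p}^{\Nek}(z) + m\,c_{U,i} = -m\bigl(\sigma_i(z) - c_{U,i}\bigr),
\]
which is the claim for $z \neq b$. Finally I would remark that both $\sigma_i$ and $\rho_\calN \circ \Psi \circ \widetilde{j_b}$ are (locally) analytic functions on $U(\Z_p)$, so the identity extends across the residue disk of $b$ by continuity.

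The only thing requiring attention — not a real difficulty — is keeping the normalizations consistent: the transpose $Z_i^\top$ versus $Z_i$ (same induced endomorphism, different rigidification), the factor $m$ built into $\alpha_i = m\cdot \circ \tr_{c_i} \circ f_i$, the sign in the definition of $D_{Z_i^\top}(z,b)$, and the fact that $\psi$ is defined so the $h_p$-term enters $\Psi \circ \widetilde{j_b}$ with the sign shown in Corollary~\ref{cor:jbtilde}. Once these are pinned down, every step is purely formal.
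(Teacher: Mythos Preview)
Your proof is correct and follows essentially the same approach as the paper's own proof: both unwind $\rho_\calN \circ \Psi \circ \widetilde{j_b}$ via Corollary~\ref{cor:jbtilde}, convert between $A_{\alpha_i}|_{z \times X}$ and $D_{Z_i^\top}(z,b)$ using Proposition~\ref{prop:DbzAalpha}, and identify the Coleman--Gross heights with the Nekov\'a\v{r} heights via Theorem~\ref{thm:QCInektoCG}. The only difference is cosmetic ordering (you start from $\rho_\calN(\Psi(\widetilde{j_b}(z)))$ and work toward $-m(\sigma_i(z)-c_{U,i})$, the paper goes the other way), and you add a continuity remark for $z = b$ that the paper omits.
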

\begin{proof}
Let $z \in U(\Z_p) \subset X(\Z_p)$ with $z\neq b$. By Theorem~\ref{thm:QCInektoCG} and Proposition~\ref{prop:DbzAalpha} we have that 
\[-mh_{i,q}^{\Nek}(z) = -mh_q(z-b,D_{Z_i^\top}(z,b)) = h_q(z-b,A_\alpha|_{z  \times X}).\] 
By Lemma~\ref{lem:Vq}, 
\[h_q(z-b,A_\alpha|_{z  \times X}) = -V_q \log q\].

Then 
\begin{align*}
&-m(\sigma_i(z) - c_{U,i}) = -m(h^{\Nek}(z) - h_p^{\Nek}(z) - c_{U,i})\\
& = h(z-b,A_\alpha|_{z \times X}) - h_p(z-b,A_\alpha|_{z \times X}) + m c_{U,i}.
\end{align*} This is equal to
\begin{align*}
&h(z-b,A_\alpha|_{z \times X}) - (h_p(z-b,A_\alpha|_{z \times X}) - m c_{U,i}) =\\
& \rho_{\calN}((z-b,A_{\alpha}|_{z \times X}, h_p(z-b,A_\alpha|_{z \times X}) - mc_{U,i})) 
=  \rho_{\calN}(\widetilde{j_b}(z)).
\end{align*}
This last equality follows from Corollary~\ref{cor:jbtilde}.
\end{proof}

\begin{proof}[Proof of Theorem~\ref{thm:comparison}]
Let $c \in \Omega_i$, and consider the function $\sigma_i - c$. 
By \eqref{eq:omegai}, Theorem~\ref{thm:QCInektoCG}, and Definition~\ref{def:cui} there is a simple open $U \subset X$ such that $c = c_{U,i}$.  

Let $\widetilde{j_{b,U,i}}$ denote the map $U \to T_i$. According to Lemma~\ref{lem:pullbackrhon} we have that $-m(\sigma_i -c)\colon U(\Z_p) \to \Q_p$ is the composite
\begin{equation}
\label{eq:gui}    
U(\Z_p) \xrightarrow{\widetilde{j_{b,U,i}}} T_i(\Z_p) \to \calM^\times(\Z_p) \xrightarrow{\Psi} \calN \xrightarrow{\rho_{\calN}} \Q_p,
\end{equation} where $T_i(\Z_p) \to \calM^{\times}(\Z_p)$ is the natural injective map. Define $g_{U,i} \colonequals -m(\sigma_i -c)$. Note that the first three maps in \eqref{eq:gui} are injections.

With this formulation we have \[X(\Q_p)_{\Coh} = \bigcup_{U} \bigcap_{i}  Z(g_{U,i}).\] 

Similarly, we can write
\[
X(\Z_p)_{\Geo} = \bigcup_{U} \bigcap_i \widetilde{j_{b,U,i}}^*(\widetilde{j_{b,U,i}}(U(\Z_p)) \cap \overline{T_i(\Z)}).
\]

By Lemma~\ref{lem:rhonvanish}, the set $Z(g_{U,i})$ contains \[\widetilde{j_{b,U,i}}^*(\widetilde{j_{b,U,i}}(U(\Z_p)) \cap \overline{T_i(\Z)}).\]
Therefore, we get the containment $X(\Z_p)_{\Geo} \subseteq X(\Q_p)_{\Coh}$.

By Lemma~\ref{lem:diffzarclos} for fixed $U,i$ the difference \[Z(g_{U,i}) \setminus \widetilde{j_{b,U,i}}^*(\widetilde{j_{b,U,i}}(U(\Z_p)) \cap \overline{T_i(\Z)})\] consists exactly of points $P$ that fail the Mordell--Weil sieve and points $P$ such that $j_b(P)$ lies not in $\ol{J(\Z)}$ but only in its $p$-saturation. We see that an element of $X(\Q_p)_{\Coh} \setminus X(\Z_p)_{\Geo}$ satisfies condition~\eqref{failMWsieve} or condition~\eqref{badreduction} of Theorem~\ref{thm:comparison}.

On the other hand, if $P \in X(\Q_p)_{\Coh}$ fails the Mordell--Weil sieve or $j_b(P) \notin \ol{J(\Z)}$, then $P \not \in X(\Z_p)_{\Geo}$. The theorem follows.
\end{proof}

\section{Example}
\label{sec:example}

We give an example of the implementation on  the modular curve $X_0(67)^+$ of the algorithms presented. The rational points on this curve have already been determined \cite{RecentApproaches} using quadratic Chabauty and a Mordell--Weil sieve, but we can also use the methods presented here to show the following proposition about the rational points of the curve in one residue disk. \texttt{Magma} code that can be used to verify the computations here can be found in \cite{DRHSgitrepo}. 
Let $X$ be a regular model for $X_0(67)^+$ over the integers given by the homogenization of $y^2 + (x^3 + x+ 1)y = x^5 - x$ in the weighted projective plane $\PP^2_{(1, 3,1)}$. Then $X(\Q) = X(\Z)$ and we show the following.
\begin{theorem}
\label{thm:mainthm67}
The integer points of $X(\Z)$ that do not reduce to $(1, 4) \in X(\F_7)$ are contained in the set
\begin{align*}
&\{[0:-1:1],[4\cdot 7 +  O(7^2): 6 + 6\cdot 7 + O(7^2):1],[0:0:1],[4\cdot7 + O(7^2) : 3\cdot7 + O(7^2) : 1],\\
&[1:0:1],[1 + 2 \cdot 7 + O (7^2): 5\cdot 7+ O(7^2):1],[1:-3:1],[1+2\cdot7 + O(7^2) : 4 + O(7^2) : 1],\\
&[1:-1:0],[1:6 + 3\cdot 7+O(7^2):3\cdot 7+O(7^2) ], [1:0:0],[1:4\cdot 7+O(7^2):4\cdot 7+O(7^2)]\}.
\end{align*}
\end{theorem}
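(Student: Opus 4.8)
The plan is to run Algorithm~\ref{alg:mainGQCalg} on $X = X_0(67)^+$ at the prime $p = 7$, residue disk by residue disk, and to collect the resulting candidate points. Here $X$ has genus $g = 2$; its Jacobian $J$ has Mordell--Weil rank $r = 2$ and N\'eron--Severi rank $\rho = 2$ (real multiplication), so $r < g + \rho - 1$ and a single trace zero endomorphism $f$ suffices; moreover $7$ is a prime of good reduction and $X$ has bad reduction only at $67$. First I would assemble the endomorphism data: from the $(2\times 2)$-matrix describing the action on $H^0(X,\Omega^1)$ of a trace zero endomorphism $f$ obtained from the Hecke operator $T_2$, the algorithm of \cite{RigorousEndo} yields a correspondence $D_{f,\Q}\subset X_\Q\times X_\Q$ inducing $f$, and Algorithms~\ref{alg:Aalpha} and~\ref{alg:computec} then produce the divisor $A_\alpha$ on $X^{\sm}\times X$ and the class $c\in J(\Z)$ with $\alpha = m\cdot\circ\tr_c\circ f$. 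Next I would run Algorithm~\ref{alg:calculateVq} at the unique bad prime $q = 67$ to obtain the integer $V_{67}$, hence the constant $c_U = -V_{67}\log 67\in\Q_7$ of Corollary~\ref{cor:jbtilde}, and finally fix explicit generators $G_1,G_2$ of the rank-$2$ group $J(\Q)$ together with a convenient rational base point $b$.

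The main work is the loop over the points $\overline P\in U(\F_7)$. For each $\overline P$ I would first apply the Mordell--Weil sieve at $7$: if $j_b(\overline P)$ is not in the image of $J(\Z)\to J(\F_7)$ then $U(\Z_7)_{\overline P}$ has no integer point and is discarded, leaving only finitely many disks. In each surviving disk I would (i) compute the embedding $\phi\circ\lambda\colon\Z_7\to\Z_7^2\times\Q_7$ of the disk into the pseudoparametrized torsor via Algorithm~\ref{alg:jbtilde}, whose first two coordinates are the Coleman integrals $\log(P_\nu - P_0)$ and whose last is $h_7(P_\nu - b,\,A_\alpha|_{P_\nu\times X}) + c_U$, computed by Algorithm~\ref{alg:localheightshyp} after splitting $A_\alpha|_{P_\nu\times X}$ (or a small integer multiple) into $\Q_7$-rational points with Cantor's algorithm \cite{Cantor}; (ii) compute the points $Q_{ij}, Q_{i0}\in\calN$ with Algorithm~\ref{alg:Qij}, the local heights away from $7$ involving only the prime $67$ and coming from intersection theory on the component group as in \cite{vonBommelHolmesMuller}; (iii) assemble $\kappa\colon\Z_7^2\to T(\Z_7)_{\widetilde{j_b}(\overline P)}$ from the $Q_{ij}$ via~\eqref{eqn:kappaExplicit} and compose with $\phi$ to get the two homogeneous linear and one quadratic polynomial of Proposition~\ref{prop:phicirckappa}; and (iv) Hensel-lift to obtain the two integral convergent power series $g_1,g_2$ cutting out $\im(\phi\circ\lambda)$ (Proposition~\ref{prop:embeddingcurve}) and return $g_1\circ(\phi\circ\kappa),\ g_2\circ(\phi\circ\kappa)$, a system of two power series in two variables over $\Z_7$ whose common zeros parametrize $\widetilde{j_b}(U(\Z_7)_{\overline P})\cap\overline{T(\Z)}$. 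By Proposition~\ref{prop:valuationheight} (with the constant $v$ vanishing in the residue disks at hand, cf.\ the remark following that proposition) and $7 > 3$, reducing this system modulo $7$ gives a system of polynomials of degree at most $2$ in two variables over $\F_7$; I would solve it over $\F_7$, Hensel-lift each simple zero, and read off the corresponding point $P_\nu$ on the curve. Since $X(\Z) = X(\Q)\subseteq X(\Z_7)_{\Geo}$, every integer point lies in the union over the surviving disks of the candidate sets so obtained.

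The step I expect to be the main obstacle is the residue disk over $(1,4)\in X(\F_7)$, to which $[1:-3:1]$ reduces; this is exactly why the theorem carves it out. There the computation does not conclude at precision $1$ — the reduced system $g_i\circ(\phi\circ\kappa)\bmod 7$ fails to have a simple common zero at the reduction of $[1:-3:1]$ (equivalently, the Jacobian of the system degenerates there) — and one cannot push to higher precision because Algorithm~\ref{alg:localheightshyp} need not terminate in this disk (the divisor $A_\alpha|_{P_\nu\times X}$ and its integer multiples may fail to split into $\Q_7$-rational points), so the method cannot currently certify the contents of that disk; we merely record the two low-precision candidates $[1:-3:1]$ and $[1 + 2\cdot 7 + O(7^2):4 + O(7^2):1]$ found there. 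For the remaining five disks, over the reductions of $[0:-1:1]$, $[0:0:1]$, $[1:0:1]$, $[1:-1:0]$ and $[1:0:0]$, the precision-$1$ computation goes through and each yields a rational point together with one spurious $7$-adic point; the $12$-element set in the statement is the union of these ten certified candidates with the two extra points from the $(1,4)$ disk, which proves the claimed containment for integer points not reducing to $(1,4)$. A secondary, purely technical point is to track the controlled loss of $p$-adic precision through the Coleman--Gross height computations, as described in \cite[Section~6.2]{BalaBesserIMRN}.
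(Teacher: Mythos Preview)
Your overall plan is the paper's plan: run Algorithm~\ref{alg:mainGQCalg} disk by disk at $p=7$ with the single trace zero endomorphism coming from $T_2$, and collect the survivors. That part is fine. The genuine problems are all concentrated in your treatment of the excluded disk.

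First, you have the wrong points in that disk. The disk carved out by the theorem is the one containing the \emph{two} rational points $R=[1:-3:2]$ and $\iota R=[1:-10:2]$ (see Remark~\ref{rem:residuediskR}), not $\iota b=[1:-3:1]$. This matters: the point $[1:-3:1]$ and its companion $[1+2\cdot 7+O(7^2):4+O(7^2):1]$ are in the twelve-element set precisely because that disk is \emph{not} excluded and the precision-$1$ computation there succeeds with two candidates, just like the other five disks you list. So your accounting ``ten certified candidates plus two extra points from the $(1,4)$ disk'' is wrong; the twelve points are six disks times two candidates each, all certified, and the excluded disk contributes nothing to the list.

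Second, your diagnosis of why the excluded disk is excluded is off. The actual obstruction, spelled out in Remark~\ref{rem:residuediskR}, is that in that disk one finds $\im\widetilde{j_b}\equiv\im\kappa(0,n_2)$ modulo $p$, so the reductions $\overline{g_1},\overline{g_2}$ cut out a non-finite ring $\F_p[n_1,n_2]/(\overline{g_1},\overline{g_2})\simeq\F_p[n_2]$; the Hensel-type criterion \cite[Theorem~4.12]{EdixhovenLido} therefore gives no bound at all at precision $1$. This is more than a ``non-simple zero'': an entire affine line of solutions survives mod $p$. And the reason one cannot simply increase precision is not that Algorithm~\ref{alg:localheightshyp} fails to split the divisor; it is that the higher-precision interpolation would require evaluating local heights at $p$ for points lying in residue disks at infinity, which current implementations of the Coleman--Gross height do not handle. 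The non-termination of Algorithm~\ref{alg:localheightshyp} is a separate potential failure mode mentioned elsewhere in the paper, but it is not what happens here.

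A minor point: the special fiber at $67$ is geometrically irreducible, so $V_{67}=0$ and all local heights away from $7$ vanish; you do not need to invoke \cite{vonBommelHolmesMuller} or compute anything at $67$.
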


\begin{remark}
\label{rem:residuediskR}
The residue disk above $(1, 4) \in X(\F_7)$ has at least two integer points, $[1:-3:2]$ and $[1:-10:2]$. Using geometric quadratic Chabauty modulo $p^2$, we cannot bound the size of this residue disk. After doing the necessary calculations, it turns out $\im \widetilde{j_b}(z) = \im \kappa(0, n_2)$. 
In this case, applying \cite[Theorem~4.12]{EdixhovenLido}, since the ring $\F_p[n_1, n_2]/(\overline{g_1}, \overline{g_2} ) \simeq \F_p[n_2]$ is not finite, we cannot determine the solutions using calculations modulo $p^2$.

By increasing precision we are guaranteed a finite set of solutions in this residue disk. In practice, this requires computing heights of points that lie in residue disks at infinity which is not possible using current implementations of Coleman--Gross heights.
\end{remark}

We present the computations in a single residue disk over $\overline{P} = (0, -1) \in X(\F_7)$ where we show the following.
\begin{proposition}
\label{prop:points}
The integer points of $X(\Z)$ reducing to $(0, -1) \in X(\F_7)$
are contained in the set
\[\{(0,-1),(4\cdot 7 +  O(7^2), 6 + O(7^2)) \}.\]
\end{proposition}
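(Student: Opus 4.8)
The plan is to run Algorithm~\ref{alg:mainGQCalg} in the single residue disk over $\overline{P} = (0,-1) \in X(\F_7)$ and read off the resulting equations modulo $p^2$. First I would set up the geometric data: since $X_0(67)^+$ has genus $2$ and N\'eron--Severi rank $\rho = 2$, there is exactly one trace zero endomorphism $f$ to work with, arising from the Hecke operator $T_2$ (the $(2\times 2)$-matrix action on differentials is the one denoted \eqref{eqn:matrixRepf} in this section), so $T = T_f$ is a single $\G_m$-torsor over $J$. Since $r = g = 2$, this is genuinely a quadratic Chabauty situation. I would compute the correspondence $A_\alpha$ on $X^{\sm} \times X$ inducing $m \cdot \circ f$ via Algorithm~\ref{alg:Aalpha} (here $67$ is the unique bad prime, so $m$ and the bad-prime contributions $V_q$ are controlled by the special fiber at $67$), and then the point $c = [(\mathcal{L}_\alpha)_{b \times X}] \in J(\Z)$ via Algorithm~\ref{alg:computec}, using the rational base point $b = [0:-1:1]$.

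Next I would parametrize the curve residue disk: pick a local parameter $\nu$ at $\overline{P}$, giving $\nu \mapsto P_\nu \in U(\Z_7)_{\overline{P}}$, and compute $\varphi \circ \lambda(\nu)$ using Algorithm~\ref{alg:jbtilde} (and its hyperelliptic specialization Algorithm~\ref{alg:localheightshyp} for the Coleman--Gross local height at $7$, via the Balakrishnan--Besser algorithm, together with the Coleman integral $\log(P_\nu - b)$). By Proposition~\ref{prop:valuationheight}, modulo $p^2$ this map is linear in $\nu$ provided the constant $v$ is $0$ for this disk (which Remark after Proposition~\ref{prop:valuationheight} asserts is the case here), so it suffices to evaluate at $\nu = 0$ and $\nu = 1$ and interpolate a line. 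In parallel I would compute the integer points of the torsor: with a generating set $G_1, G_2$ of $J(\Q)$, run Algorithm~\ref{alg:Qij} to get the points $Q_{ij}, Q_{i0}$ in the trivial biextension $\calN$ (Coleman--Gross heights at $7$, heights away from $7$ via von Bommel--Holmes--M\"uller), then assemble $\kappa_\Z$ and its extension $\kappa \colon \Z_7^2 \to T(\Z_7)_{\widetilde{j_b}(\overline{P})}$ through formulas \eqref{eqn:Pij}--\eqref{eqn:kappaExplicit}. By Proposition~\ref{prop:phicirckappa}, $\varphi \circ \kappa$ is given by $g = 2$ homogeneous linear polynomials and $\rho - 1 = 1$ polynomial of degree $\leq 2$ in $(n_1, n_2)$.

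Finally, by Proposition~\ref{prop:embeddingcurve} the image $\im(\varphi \circ \lambda) \subset \im \varphi$ is cut out inside $\Z_7^{g + \rho - 1} = \Z_7^3$ by $g + \rho - 2 = 2$ integral convergent power series $g_1, g_2$; pulling these back along $\varphi \circ \kappa$ gives two power series in $\Z_7\langle n_1, n_2 \rangle$ whose common zeros, intersected with $\Z_7^2$, contain the integer points of $U$ in this disk. Working modulo $7^2$ (so the curve embedding is linear and $\varphi \circ \kappa$ is known to the needed precision), these become explicit polynomial congruences in $(n_1, n_2)$ over $\F_7$; by \cite[Theorem~4.12]{EdixhovenLido} (the Hensel-type lemma), if the reduced ideal $(\overline{g_1 \circ (\varphi\circ\kappa)}, \overline{g_2 \circ (\varphi\circ\kappa)}) \subset \F_7[n_1, n_2]$ has finite quotient, the solution set lifts to the same number of points over $\Z_7$ to precision $7^2$. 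I expect exactly two solutions, corresponding to the two listed points $(0,-1)$ (the base point itself, at $\nu = 0$) and $(4\cdot 7 + O(7^2),\, 6 + O(7^2))$; translating the $\nu$-values of these solutions back through $\nu \mapsto P_\nu$ yields the stated coordinates. The main obstacle is a precision/genericity issue rather than a structural one: one must check that the finite-precision computation actually certifies finiteness of the solution set in this disk (i.e.\ that the Jacobian of the two power series is invertible, so that we are not in the degenerate situation of Remark~\ref{rem:residuediskR} where $\F_p[n_1,n_2]/(\overline{g_1},\overline{g_2})$ fails to be finite), and that the loss of $p$-adic precision incurred in the Coleman--Gross height computations still leaves enough precision to distinguish the two roots modulo $7^2$.
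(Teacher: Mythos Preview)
Your approach is essentially identical to the paper's: run Algorithm~\ref{alg:mainGQCalg} in the single disk over $\overline{P}=(0,-1)$, interpolate $\varphi\circ\lambda$ linearly from $\nu=0,1$, compute $\varphi\circ\kappa$ from the $Q_{ij}$, pull back the two cutting equations, and invoke \cite[Theorem~4.12]{EdixhovenLido} once the quotient $\F_7[n_1,n_2]/(\overline{p_1},\overline{p_2})$ is seen to be $\F_7\times\F_7$.

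Two slips to flag. First, the base point is $b=[1:0:1]$, not $[0:-1:1]$; the latter is the point $P$ whose residue disk you are examining (so ``the base point itself, at $\nu=0$'' is a misstatement --- $P_0=P$, not $b$). Second, you overcomplicate the away-from-$p$ contributions: the paper observes that the special fiber at $67$ is geometrically irreducible, hence $m=1$, there is a single simple open $U=X^{\sm}$, and all local heights $h_q$ for $q\neq 7$ vanish, so the $V_q$ and the von~Bommel--Holmes--M\"uller step are trivial here. Neither of these affects the structure of the argument.
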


We first list some facts about this curve that will be useful in our computations. The curve $X$ is a projective curve of genus $2$ with Jacobian $J$. We recall some details about $X$ and its Jacobian that are presented in \cite[Section~6]{RecentApproaches}. The Jacobian $J$ has Mordell--Weil rank $2$ and $J_\Q$ has N\'eron--Severi rank $2$.  In addition, the only prime of bad reduction of $X$ is $67$. At $67$, the special fiber is geometrically irreducible: it has one component with two nodes defined over $\F_{67^2}$. Hence, there are only geometrically irreducible fibers over every prime.
\begin{remark}
For this example curve, all of the fibers are geometrically irreducible, leading to a simplification in the notation used in the example compared to the notation in the preceding sections. In general, one needs to consider a distinction between $J$ and $J^0$, where $J^0$ is the fiberwise connected component of $0$ in $J$.
We also omit the constant $m$ which is the least common multiple of the exponents of all $J/J^0(\overline{\F}_p)$, with $p$ ranging over all primes. Since $J = J^0$, we have $m = 1$. Let $X^\sm$ denote the open subscheme of $X$ consisting of points at which $X$ is smooth over $\Z$. Above, we consider the simple open subschemes $U$ of $X^{\rm sm}$. In this example, there is only one simple open to consider: the scheme $X^\sm$ obtained by removing the two Galois conjugate nodes in the fiber over $67$. Since $X$ is regular, $X^\sm(\Z) = X(\Z)$.
\end{remark}

Let $\iota$ be the hyperelliptic involution of $X$. We list some rational points on the curve that will be used in our computations:
\begin{align}
\label{eqn:namesofpoints}
    P&\colonequals[0:-1:1], &\iota P&\colonequals[0:0:1],\notag
    \\Q&\colonequals[-1:0:1],&\iota Q&\colonequals[-1:1:1],\notag\\
    b&\colonequals[1:0:1],&\iota b&\colonequals[1:-3:1],\\
    R&\colonequals[1:-3:2],&\iota R&\colonequals[1:-10:2],\notag\\
    \infty_+&\colonequals[1:0:0], &\infty\,\!_ -&\colonequals[1:-1:0].\notag
\end{align}
These points turn out to be the only rational points on $X$, as proven in \cite[Theorem~6.3]{RecentApproaches} by a combination of quadratic Chabauty and the Mordell--Weil sieve. 

Let $p = 7$. We first perform some local computations. There are $9$ points on $X(\F_p)$. For each $\F_p$-point $x$ of $X^\sm$, we need an element in $T(\Z)_{\widetilde{j_b}(x)}$, or equivalently an element in $J(\Z)_{j_b(x)}$. Every residue disk of $X(\Z_p)$ contains an integer point; only $R$ and $\iota R$ reduce to the same point. Therefore, none of the residue disks $J(\Z)_{j_b(x)}$ are empty. So we cannot rule out any residue disks of the torsor immediately; in fact, this calculation is a Mordell--Weil sieve at $p$, see \cite[Section~3.4]{geometricLinearChab} for more details.

This example presents the specific case of the residue disk corresponding to $X(\Z)_{\overline{P}}$, where $P$ is the point defined in \eqref{eqn:namesofpoints}. Because we can consider residue disks up to the hyperelliptic involution, this also gives us the analogous result for the residue disk corresponding to $\iota P$.

Let $j_b\colon X^\sm\to J$ denote the Abel--Jacobi map with base point $b$ defined in \eqref{eqn:namesofpoints}. We also have a set of generators for the Mordell--Weil group $J(\Z)$ from the \href{https://www.lmfdb.org/Genus2Curve/Q/4489/a/4489/1}{LMFDB}, 
\begin{align}\label{eqn:generatorMordellWeil}
    G_1 &\colonequals [P-\iota P],\\\notag
    G_2 &\colonequals [P + Q - 2\cdot \iota P].
\end{align}

Since $X$ is a modular curve, its Jacobian has an action by the Hecke algebra. To describe the Hecke action on $J$ explicitly, we fix the following basis for $H^0(X_\Q, \Omega_{X_\Q}^1)$ :
\begin{align}\label{eqn:basisOneForms}
    \left\{\frac{dx}{2y  -x^3 - x - 1},\;\;\frac{xdx}{2y  -x^3 - x - 1}\right\}.
\end{align}
We focus on the endomorphism given by the action of the Hecke operator $T_2$ on 1-forms of $X$. The Kodaira--Spencer map gives an isomorphism between $H^0(X_\Q, \Omega_{X_\Q}^1)$ and $S_2(67)^+$. We choose a basis for $S_2(67)^+$ that is given by $q$-expansions with rational coefficients, as follows:
\begin{align*}
    g_1&\colonequals q - 3q^3 - 3q^4 - 3q^5 + q^6 + 4q^7 + 3q^8 +O(q^9),\\
    g_2&\colonequals q^2 - q^3 - 3q^4 + 3q^7 + 4q^8 +O(q^9). \notag 
\end{align*}
Then we choose the model for $X$ where $\frac{du}{v}$ corresponds to $g_1 \frac{dq}{q}$ and $u\frac{du}{v}$ corresponds to $g_2 \frac{dq}{q}$, by setting $u = \frac{g_2}{g_1}$ and $v = q \frac{du}{g_1 dq}$. This allows us to find $q$-expansions for the monomials $\{ v^2, 1, u, u^2, \ldots, u^5, u^6\}$ and use linear algebra to get an explicit equation for the new model of $X$,
\begin{align*}
    v^2=9u^6 -14u^5 + 9u^4 -6u^3 + 6u^2 -4u + 1.
\end{align*}

Writing down an explicit change of model to the regular model, we can find the $q$-expansion of the forms in \eqref{eqn:basisOneForms} and compute the Hecke action on these $q$-expansions.
This gives us the matrix representation of the Hecke operator $T_2$ with respect to the basis on \eqref{eqn:basisOneForms}. The trace of this matrix is nonzero, so we let $f\colonequals 2T_2 + 3 \id\colon J \to J$. The endomorphism $f$ has trace zero and matrix representation
\begin{align}\label{eqn:matrixRepf}
    \begin{pmatrix}
    1&-2\\-2&-1
    \end{pmatrix}
\end{align}
with respect to the basis presented in (\ref{eqn:basisOneForms}).
Using the work of \cite{RigorousEndo}, we can compute a divisor $D_f \subset X_{\Q} \times X_{\Q}$ inducing $f$. The equations that define this divisor are given in \eqref{eqn:DfExample}. Then Algorithm~\ref{alg:Aalpha} produces the divisor $A_\alpha$ that satisfies the properties of Lemma~\ref{lemma:A-alpha}.

We now use Algorithm~\ref{alg:applyf} to calculate $f(G_1)$ and $f(G_2)$, where $G_1$ and $G_2$ are the generators of the Mordell--Weil group of $J$ as in \eqref{eqn:generatorMordellWeil}.

Since $J(\Z) = J(\Q)$, the divisor $f(G_i)$ only needs to be computed over the rationals for $i = 1,2$. For example, applying  \eqref{eqn:alphaPminQ} we get $f(G_1)=\calO_X(D_f|_{P \times X} - D_f|_{\iota(P) \times X})$ and we can compute an explicit divisor $f(G_1)$ using the equations for $D_f$. We find that 
\begin{align}\label{eqn:fOfG1G2}
f(G_1) &= -G_1+2 G_2 = [-(P-\iota P) + 2(P+Q - 2\iota P)] = [P + 2Q - 3\iota P],\\ 
f(G_2) &= 2G_1 +G_2 = [2(P-\iota P) + 1(P+Q - 2\iota P)] = [3P + Q -4\iota P].\notag
\end{align}
Furthermore, we compute $c= [-11G_1-8 G_2]$ using Algorithm~\ref{alg:computec}.

We can parametrize the residue disk over $\overline{P}$ up to finite precision by
\begin{align}
    \label{eq:param}
    \F_p &\to X(\Z/p^2\Z)_{\overline{P}},  \hspace{0.2in} \nu \mapsto P_\nu \text{ such that } x(P_\nu)/p = \nu.
\end{align}

We now find the trivializing section $\phi \circ \lambda$, following Section~\ref{sec:embeddingcurve}. By direct computation the constant $v$ from Proposition \ref{prop:valuationheight} is $0$, hence the pseudoparametrization $\phi$ has codomain $\Z_p^3$ (instead of $\Z_p^2 \times \Q_p$).  This computation is done using code from the repository \cite{QCMod}.

Since $p > 3$, by Proposition~\ref{prop:valuationheight} the map $\phi \circ \lambda: \Z_p \to \Z_p^3$ is linear  modulo $p$. We will calculate $\widetilde{j_b}(P_0) $ and $\widetilde{j_b}(P_1)$ following Algorithm~\ref{alg:jbtilde} and interpolate to determine the map. What the following computations show is that 
\begin{equation}\label{eqn:jbtildeEx}
\phi \circ \lambda(\nu) \equiv (2\nu,0,6-\nu) \mod p.
\end{equation} 
By Proposition~\ref{prop:embeddingcurve}, the image of the map $\phi \circ \lambda$ is cut out by two convergent power series. Giving $\Z_p^3$ the coordinates $(x_1, x_2, x_3)$, we see the image of $\phi \circ \lambda$ is cut out by the equations $g_1 = 0,g_2 = 0$ with  $g_1 \equiv x_2 \mod p,\, g_2 \equiv 2x_3 + x_1 + 2 \mod p$.

Algorithm~\ref{alg:jbtilde} relies on being able to compute Coleman--Gross local heights at $p$ and at primes of bad reduction. We first note that, since the special fiber of $X$ at $67$ is geometrically irreducible, the heights at $\ell \neq p$ are all trivial, and we only have to consider the heights at $p$.
Balakrishnan \cite{BalaLocalhts} has implemented Coleman--Gross local heights $h_p(D, E)$ for disjoint divisors of degree $0$ on a curve $Y$ with a few requirements:
\begin{enumerate}
    \item \label{req:odd-deg} the hyperelliptic curve $Y\colon y^2 = H(x)$ is given by a monic odd degree model;
    \item \label{req:splitting} the divisors $D$ and $E$ split as a sum of points $D= \sum_i n_i P_i$, $E= \sum_j m_j Q_j$  with $P_i, Q_j \in Y(\Q_p)$.
\end{enumerate}
\begin{remark}
\label{rem:linequiv}
Suppose that $D= \sum_i n_i P_i$ and $E  =  \div r +E'$  where $E' = \sum_j m_j Q_j$ with $P_i, Q_j \in Y(\Q_p)$. Then \[h_p(D, E) = h_p(D, E' + \div r) =  h_p(D, E') + h_p(D, \div r) = h_p(D, E')  + \log( r(D) )\] so we can also compute $h_p(D, E)$.
\end{remark}

Therefore we make a change of model when doing computations on $\calN$. The even degree model of $X$ is given by
\begin{align*}
    y^2 =  g(x)\colonequals x^6 + 4x^5 + 2x^4 + 2x^3 + x^2 - 2x + 1,
\end{align*}
where $g(x)$ has a $7$-adic zero $\beta = 4 + 3\cdot 7 + 4\cdot 7^2 + O(7^3)$. We can construct a degree $5$ model:
\begin{align*}
    \beta^6 y'^2 = g(\beta x'/(x'-1)) \cdot (x'-1)^6.
\end{align*}
Letting $c_0= 5 + 3\cdot 7 + 3\cdot 7^2 + O(7^3)$ be a $5$th root of the leading coefficient of $g(\beta x'/(x'-1))$ we obtain an odd degree model over $\Q_p$ given by the coordinate transformation from the even degree model
\begin{align}
\label{eqn:modelchange}
    (x,y) \mapsto (c_0 \cdot x/(x - \beta), \beta^3 y/(x-\beta)^3).
\end{align}

\begin{remark}
Forthcoming work of Gajovi\'{c} gives a practical algorithm and code for computing Coleman--Gross local heights $h_p(D, E)$ on even degree hyperelliptic curves.
\end{remark}

We now compute for $P$ the local height $\psi(\widetilde{j_b}(P)) = h_p(P-b,A_\alpha|_{P \times X})$. Let $B,C$ be the divisors on $X$ defined in Algorithm~\ref{alg:Aalpha}. One can check that $B\cap P_{\nu}$ is empty over $\Z/p^2\Z$ for all $\nu \in  \F_p$, so we have 
$ A_\alpha|_{P_\nu \times X} = D_f|_{P_\nu \times X} + B - C$; we denote $A_\alpha|_{P_0 \times X} $ by  $E_{P_0}$. Over the rationals 
\begin{align*}
    E_{P_0} \sim [0 : 0 : 1] -[-1 : 1 : 1] +2[-1 : 0 : 1] -2[1 : -3 : 1]  =: E_{P_0}',
\end{align*}
with $E_{P_0} = E_{P_0}' + \Div g_{P_0}$ where $g_{P_0}$ is computed explicitly as an element of the function field and given by equation \eqref{eqn:P0}.
By Remark~\ref{rem:linequiv}, we can decompose $h_p(P-b, E_{P_0}) = h_p(P-b,E_{P_0}') + h_p(P-b, \Div g_{P_0})$. We compute 
\begin{align*}
    h_p(P-b, \Div g_{P_0}) = \log g_{P_0}(P)/g_{P_0}(b) = \log (4/9) \equiv 7 \mod 49.
\end{align*}
We also compute
\begin{align*}
    h_p(P-b,E_{P_0}') =5\cdot 7 + 3\cdot7^2 + 3\cdot7^3 + 6\cdot7^4 + 7^5 + 5\cdot7^6 + 2\cdot7^7 + 6\cdot7^8 + O(7^9).
\end{align*}
So, $\psi(\widetilde{j_b}(P)) = 6 \cdot 7 + O(7^2)$.

Unlike the $P_0$ case, the divisor $D_{P_1} \colonequals D_f|_{P_1 \times X}$ is not a sum of two $p$-adic points. Instead we use the explicit Cantor's algorithm \cite{Cantor, SutherlandFastJac} to get a linearly equivalent multiple which does split as a sum of $p$-adic points.

Let $(u_1,v_1)$ be the Mumford representation for $D_{P_1}$. Then using \cite[Algorithm Compose]{SutherlandFastJac} we can compute  $(u_2,v_2)$, the Mumford representation for $2D_{P_1}$. Applying \cite[Algorithm Reduce]{SutherlandFastJac} we obtain the Mumford representation $(u_3, v_3)$ for the reduction of $2 D_{P_1}$ along with $ r =   (y-v_2(x))/ u_3(x)$, satisfying the relationship
\begin{align}
\label{eqn:explicitred}
    2D_{P_1}  = \div (u_1,v_1) = \Div ((y-v_2(x))/u_3(x)) + \div (u_3,v_3).
\end{align}

\begin{remark}
Since the computations for $D_{P_1}$ were done on the regular model, we need to change the equations to the odd degree model. The Mumford divisor for $D_{P_1}$ is a sum of $2$ points over a totally ramified extension of $\Q_p$.  Using the equations \eqref{eqn:modelchange} for the change of model we can map the points to two points $(x_1,y_1),(x_2,y_2)$ on the odd degree model and construct the corresponding degree $2$ Mumford divisor $(u_1,v_1)$ vanishing on the $x$-coordinates using interpolation: 
$u_1(x) = (x-x_1)(x-x_2)$ and $v_1(x) = y_2 \cdot (x-x_1)/(x_2-x_1) + y_1 \cdot (x-x_2)/(x_1-x_2)$.
\end{remark}

Then $2D_{P_1}$ is linearly equivalent to a divisor that splits into a sum of two points over the odd degree model.
The splitting is given by 
\begin{align*}
    \{  Q_1,Q_2 \} \colonequals \{ (469610 \cdot 7 + O(7^9), -15018865 + O(7^9)),\, (499647 + O(7^9),-14480684+ O(7^9)) \}.
\end{align*}
By \eqref{eqn:explicitred} we have
\begin{align*}
    2D_{P_1}=Q_1+Q_2+\Div((y-v_2(x))/u_3(x))+2\infty,
\end{align*}
where 
\begin{align*}
    v_2(x) \colonequals -(462222 + O(7^8))x^3 + (73804 + O(7^8))x^2 + (1999391 + O(7^8))x - 1649234 +O(7^8)
\end{align*}
and 
\begin{align*}
    u_3(x) \colonequals (1 + O(7^8))x^2 + (1977884 + O(7^8))x + 297368\cdot 7 + O(7^8) .
\end{align*}
With the splitting in hand, we can compute $\widetilde{j_b}(P_1)$: 
\begin{align*}
    \frac{1}{2}h_p(P_1-b,2D_{P_1}) + h_p(P_1-b,B-C) =\,&h_p(P_1-b,B-C) +\frac{1}{2} h_p(P_1-b,Q_1+Q_2+2\infty)+ \\
    &\frac{1}{2} h_p(P_1-b,\Div((y-v_2(x))/u_3(x))).
\end{align*}
The divisor $B-C$ is not a sum of points, but we have that $B-C$ is equal to $4\infty_- -\iota b - 5 \iota Q + \Div(g_{P_1})$, where $g_{P_1}$ is given by \eqref{eqn:P1}.
Therefore $\psi(\widetilde{j_b}(P_1))$ is
\begin{align*}
    &h_p(P_1-b,D_{P_1} + B - C)\\ &= \frac12 h_p(P_1-b,Q_1+Q_2+2\infty + 2(4\infty_-  - \iota b - 5 \iota Q))\\ &+\frac12\log( (y-v_2)(P_1-b)/u_3 (P_1-b) )+ \log g_{P_1}(P_1-b).
\end{align*}
Then 
\begin{align*} 
    &\log g_{P_1}(P_1-b) = 6\cdot 7 + 3\cdot 7^2 + 2\cdot 7^3 + 2\cdot 7^4 +O(7^5)\\
    &\log (y-v_2)(P_1-b)/u_3(P_1-b)) = 7^2 + 3\cdot7^3 + 2\cdot7^4 + O(7^5))\\
    &h_p(P_1-b,Q_1+Q_2+2\infty + 2(4\infty_-  - \iota b - 5 \iota Q)) =5\cdot7 + 7^2 + 4\cdot7^3 + O(7^4)
\end{align*}
So $\psi(\widetilde{j_b}(P_1)) =  5\cdot 7 + O(7^2)$.

Now we can calculate $\widetilde{j_b}(P_1)$ in the map $\phi\colon T(\Z_p)_{\widetilde{j_b}(\overline{P})} \to \Z_p^3$ given in Definition~\ref{def:paramT}. We can compute this using the logarithm, normalized by the logarithm at $P$:
\begin{align*}
  &\log(P_0 - b) - \log(P_0 -b) = (0,0), \\
    &\log(P_1 - b) - \log(P_0- b)= (2\cdot 7 + O(7^2), O(7^2)).
\end{align*}

Hence we see $\phi(\widetilde{j_b}(P_0)) = (0,0,6)$ and $\phi(\widetilde{j_b}(P_1)) = (2,0,5)$. By interpolating these values we get \eqref{eqn:jbtildeEx}.

We now discuss the map $\kappa$ using formulas in Section~\ref{sec:integerpoints}.
We will show that the map $\phi \circ \kappa\colon \Z_p^2 \to \Z_p^3$, which is by Proposition~\ref{prop:phicirckappa} given by two homogeneous linear polynomials and one quadratic polynomial, is modulo $p$ equal to
\begin{align}
\label{eqn:kappaex}
(n_1,n_2) \mapsto (n_1, -n_1 - 2n_2, -3n_1^2 - n_1n_2 - n_1 + n_2 - 1).
\end{align}

Following Algorithm~\ref{alg:Qij} we construct the points of $\calM^\times(G_i, f(G_j)) (\Z)$ and $\calM^\times(G_i, c) (\Z)$ for $i, j = 1, 2$ as in \cite[Section~8.3]{EdixhovenLido}.

We work out the example $\calM^\times(G_1, f(G_2)) (\Z)$ here in detail.
Recall from \eqref{eqn:fOfG1G2} that we have $G_1 = [P - \iota P]$ and $f(G_2) = [3P + Q -4\iota P]$. By \eqref{eqn:fiber}, the $\G_m$-torsor $\calM^\times(G_1, f(G_2))$ is $f(G_2)^* \calO_X^\times (G_1)$. Since we want to work with the image in $\calN$, and this representation of $f(G_2)$ is not disjoint from $G_1$ over $\Q$, we represent $G_1$ by the linearly equivalent divisor $\iota b- \infty_+ + \infty_- - Q$ and $f(G_2)$ by the linearly equivalent divisor $3(P-\iota P) + (P - \iota Q)$. These divisors are not disjoint over $\Z$ because $-\iota Q$ and $\iota b$ intersect over $\Z/2\Z$ so
\begin{align*}
    h(P - \iota P, 3(P-\iota P) + (P - \iota Q)) = h_p(\iota b- \infty_+ + \infty_- - Q, 3(P-\iota P) + (P - \iota Q)) + \log(2).
\end{align*}
We can compute
\begin{align*}
    Q_{12} &= ([P - \iota P], [3(P-\iota P) + (P - \iota Q) ], h(\iota b- \infty_+ + \infty_- - Q, 3(P-\iota P) + (P - \iota Q))   \\\notag
    &=(G_1, f(G_1), 5\cdot 7 + 6\cdot 7^2 + 6\cdot 7^3 + O(7^4) ).
\end{align*}

 The remaining $Q_{ij}$ are:
\begin{align*}
    Q_{11}&= (G_1, f(G_1), 2\cdot 7 + 5\cdot 7^3 + O(7^4) ),\notag\\
    Q_{21}&= (G_2, f(G_1),  4\cdot7 + 3\cdot7^2 + 2\cdot7^3 +  O(7^4)),\notag\\
    Q_{22}&= (G_2, f(G_2),3\cdot 7^2 + 4\cdot7^3 +   O(7^4)),\\\notag
    Q_{10}&= (G_1, c,3\cdot 7 + 4\cdot 7^2 + 5\cdot 7^3+   O(7^4)),\\\notag
    Q_{20}&= (G_2, c,2\cdot7 + 2\cdot 7^3 +  O(7^4))\notag.
\end{align*}

\begin{remark}
\label{rem:takeloginN}
In practice, since we will need to add $Q_{ij}$ in $\calN \simeq J(\Q_p) \times J(\Q_p) \times \Q_p$ we use the map $\log\colon J(\Q_p) \to \Q_p^g$ for $i,j = 1, 2$ and for $j=0$, we store $Q_{ij}$ as the vector $(\log(G_i), \log(f(G_j)), h(G_i, f(G_j)))$. This allows us to add in $\Q_p^g$ instead of $J(\Q_p)$.
\end{remark}

We proceed to compute the bijection $\kappa\colon \Z_p^2 \to T(\Z_p)_{\widetilde{j_b}(\overline{P})}$ of the integral points of $T$ modulo $p^2$, as in \cite[Section~8.5]{EdixhovenLido}.
The divisor $j_b(\overline{P}) \in J(\F_p)$ is equal to the image of
\begin{align*}
    \widetilde{G_{t}} \colonequals G_1 + 3 G_2
\end{align*}
in $J(\F_p)$ and correspondingly we define $e_{01} \colonequals 1$ and $e_{02} \colonequals 3$.

Let $\widetilde{G_1}$ and $\widetilde{G_2}$ be a basis for the kernel of reduction  $J(\Z) \to J(\F_p)$. Since
\begin{align*}
    \widetilde{G_1} = -3G_1 + 7G_2, \hspace{.1in} \widetilde{G_2} = 7G_1 + 4G_2
\end{align*}
we define $e_{11} = -3, e_{12} = 7, e_{21} = 7, e_{22} = 4$. 

The map $\kappa_\Z$ is given in coordinates in $\calN$ by sending $(n_1, n_2)$ to
\begin{align*}
&((7 + 7^2 + 7^3 +  O(7^4))\cdot n_1 + (4\cdot7^3 + O(7^4))\cdot n_2 + 5\cdot7 + 5\cdot7^2 + 7^3 + O(7^4), \\ 
&(6\cdot7 + 4\cdot7^2 + 3\cdot7^3 +  O(7^4))\cdot n_1 + (5\cdot7 + O(7^4))\cdot n_2 + 5\cdot7^2 + 3\cdot7^3 +  O(7^4)),\\
 &((6\cdot7 + 5\cdot7^2 + 6\cdot7^3 + O(7^4))\cdot n_1 + (2\cdot7 + 2\cdot7^2 + 6\cdot7^3 +  O(7^4), \\
 &(4\cdot7 + 3\cdot7^2 + 3\cdot7^3 +  O(7^4))\cdot n_1 + (3\cdot7 + 3\cdot7^2 + O(7^4))\cdot n_2 + 4\cdot7 + 2\cdot7^3 +  O(7^4)),\\
 &(4\cdot7 + 6\cdot7^2 + 3\cdot7^3 +  O(7^4))\cdot n_1^2 + (6\cdot7 + 7^2 + 4\cdot7^3 + O(7^4))\cdot n_2^2 +\\
 & (6\cdot7 + 3\cdot7^2 + 2\cdot7^3 + O(7^4))\cdot n_1 + (7 + 7^3 +  O(7^4))\cdot n_2 + 6\cdot7 + 6\cdot7^2 + 3\cdot7^3 +  O(7^4))
\end{align*}
where we apply the logarithm to the first two coordinates as in Remark~\ref{rem:takeloginN}.

Finally, by \cite[Theorem~4.10]{EdixhovenLido}, the map $\kappa_\Z$ extends to a bijection 
\begin{align}
\label{def:kappaExample}
    \kappa\colon \Z_p^2 \to T(\Z_p)_{\widetilde{j_b}(\overline{P})}
\end{align}
with image $\overline{T(\Z)}_{\widetilde{j_b}(\overline{P})}$. 
This map $\phi \circ \kappa$ is polynomials $(\kappa_1,\kappa_2,\kappa_3) \in \Q_p[x_1,x_2] ^3$, with $\kappa_1,\kappa_2$ homogeneous linear and $\kappa_3$ at worst quadratic. Applying Corollary~\ref{lem:paramT}, we obtain the formula for $\phi \circ \kappa$ given in \eqref{eqn:kappaex}.

We now have the tools to prove the upper bound on the number of points in the residue disk $\#X(\Z)_{\overline{P}}$. We define 
\begin{align*}
    \overline{p_1} \colonequals (\varphi\circ\kappa)^* \overline{g_1} = -n_1 - 2n_2, \hspace{.1in} \overline{p_2} \colonequals (\varphi\circ\kappa)^* \overline{g_2} = n_1^2 - 2n_1n_2 - n_1 + 2n_2,
\end{align*}
and $\overline{A} \colonequals \F_p[n_1,n_2]/(\overline{p_1},\overline{p_2})$. The ring $\overline{A}$ is isomorphic to $\F_p[n_2]/(n_2^2 - 3n_2) \simeq \F_p \times \F_p$, so by \cite[Theorem~4.12]{EdixhovenLido} we have an upper bound of $2$ on $\#X(\Z)_{\overline{P}}$. Specifically, we see that there is at most one point reducing to $P_0$, namely $P$ itself, and at most one point reducing to $P_4$ in $X(\Z/p^2\Z)_{\overline{P}}$; the other $P_\nu$ have no rational points lying over them.

\begin{remark}
\label{rem:gccubic}
If we calculate $\kappa$ and $\widetilde{j_b}$ with greater $p$-adic precision, we can compute the point reducing to $P_4$ with greater precision. This can be done by brute force, that is, trying all lifts of the found solution $n_1 = 1, n_2 = 3, \nu = 4$ and seeing when any of the calculated values of $\kappa$ or $\widetilde{j_b}$ agree modulo the required precision. However, there is a more efficient way. We can look at the ``higher residue disks''  $X(\Z_p)_{P_4}$ and $T(\Z_p)_{\widetilde{j_b}(P_4)}$, consisting of points that reduce to a specified $\Z/p^2\Z$-point. We can parametrize $X(\Z_p)_{P_4}$ with the map $\Z_p \to X(\Z_p)_{P_4}$ sending $\mu$ to $P_{4 + p\mu}$. With respect to our usual map $\phi: T(\Z_p)_{\widetilde{j_b}(\overline{P})} \to \Z_p^3$, we get a bijection of the higher residue disk of the torsor $T(\Z_p)_{\widetilde{j_b}(P_4)} \to (1,0,2) + p\Z_p^3$. Given these identifications, the inclusion $\widetilde{j_b}\colon  X^\sm(\Z_p)_{P_4} \to T(\Z_p)_{\widetilde{j_b}(P_4)}$ is given by power series that are linear modulo $p$. Like in Section~\ref{sec:embeddingcurve}, these can be found by interpolation. Similarly, $\kappa$ restricted to $(1+p\Z_p) \times (3 + p\Z_p)$ gives the inclusion $\kappa\colon  \overline{T(\Z)}_{\widetilde{j_b}(P_4)} \to T(\Z_p)_{\widetilde{j_b}(P_4)}$. For these identifications, $\kappa$ is actually homogeneous linear modulo $p$. Solving the resulting affine linear system of equations, we get that the only possible intersection of the image of $\kappa$ and of $\widetilde{j_b}$ in the higher residue disk $T(\Z/p^3\Z)_{\widetilde{j_b}(P_4)} \simeq \F_p^3$ is $(5,1,5)$, corresponding to $P_{4 + p\mu}$ with $\mu = 4$. This is the point $P_{32} \in X(\Z/p^3\Z)_{P_4}$.

In total, we can strengthen Proposition~\ref{prop:points} to say the residue disk $X(\Z)_{\overline{P}}$ is contained in the set
\begin{align*}
    \{P,(4\cdot 7 + 4\cdot 7^2 + O(7^3), 6 + 6\cdot 7 + 6 \cdot 7^2 + O(7^3))\}. 
\end{align*}
\end{remark}

\section{Acknowledgements}
This project began at the 2020 Arizona Winter School on Nonabelian Chabauty. It is a pleasure to thank the project supervisors Bas Edixhoven, Guido Lido, and Jan Vonk for their help. We are also very thankful to Jennifer Balakrishnan, Steffen M\"uller, John Voight, David Holmes, Alexander Betts, Edgar Costa, Jeroen Sijsling, Andrew Sutherland and Amnon Besser for helpful correspondence or computational advice.  We thank the anonymous referee for their careful feedback, especially regarding the proof of the main theorem.

\appendix

\section{Equations}
We provide the equations used in the computations of Section~\ref{sec:example}.

We give coordinates $((x,y),(u,v))$ to $X\times X$. With this notation, the equations that define the divisor $D_f$ are the following.
\begin{footnotesize}
\begin{align}
\label{eqn:DfExample}
    D_f\colonequals&[x^5 - x^3y - xy - y^2 - x - y,
    \\\notag&
    u^5 - u^3v - uv - v^2 - u - v,\\\notag &
    1120x^{20}u^4 - 2068x^{20}u^3 + 8124x^{19}u^4 + 2407x^{20}u^2 - 16894x^{19}u^3 + 
    35279x^{18}u^4 - 1641x^{20}u + \\\notag &
    18092x^{19}u^2 - 67012x^{18}u^3 +
    102591x^{17}u^4 + 378x^{20} - 8178x^{19}u +
    58447x^{18}u^2 - 173283x^{17}u^3 + \\\notag &
    216476x^{16}u^4 + 774x^{19} - 14247x^{18}u + 103331x^{17}u^2 -
    297137x^{16}u^3 + 334741x^{15}u^4 + 1458x^{18} -\\\notag & 31130x^{17}u + 180514x^{16}u^2 - 358567x^{15}u^3 +
    360468x^{14}u^4 + 10605x^{17} - 90380x^{16}u +
    290195x^{15}u^2 - \\\notag&
    395289x^{14}u^3 + 240873x^{13}u^4 + 20415x^{16} - 159334x^{15}u + 394529x^{14}u^2 - 407100x^{13}u^3 + 44248x^{12}u^4 +
    \\\notag & 22701x^{15} -
    112959x^{14}u + 418497x^{13}u^2 - 493887x^{12}u^3 - 105112x^{11}u^4 + 25606x^{14} - 115611x^{13}u +\\\notag &
    111265x^{12}u^2 - 417580x^{11}u^3 - 92961x^{10}u^4 + 1092x^{13} - 103527x^{12}u + 145152x^{11}u^2 -
    88490x^{10}u^3 - \\\notag &
    92811x^9u^4 + 48856x^{12} + 186438x^{11}u + 267721x^{10}u^2 - 155622x^9u^3 -
    45395x^8u^4 - 27776x^{11} - \\\notag &
    191295x^{10}u - 178159x^9u^2 - 70489x^8u^3 + 16905x^7u^4 -
    61956x^{10} - 74059x^9u + 378244x^8u^2 +\\\notag & 232801x^7u^3 + 
    15979x^6u^4 + 74366x^9 +
    338472x^8u + 227589x^7u^2 - 74613x^6u^3 - 16012x^5u^4 - \\\notag &
    87675x^8 - 182672x^7u -
    189206x^6u^2 + 26802x^5u^3 + 25133x^4u^4 - 85989x^7 - 42976x^6u + 119160x^5u^2 +\\\notag &
    38380x^4u^3 - 14569x^3u^4 + 57369x^6 + 50376x^5u - 22878x^4u^2 - 26236x^3u^3 +
    5653x^2u^4 - 19638x^5 -\\\notag &
    66959x^4u + 10199x^3u^2 + 7737x^2u^3 - 1185xu^4 - 
    18109x^4 + 33891x^3u - 10338x^2u^2 + 
    126xu^3 + \\\notag &
    90u^4 + 8894x^3 - 13882x^2u + 3365xu^2 -    189u^3 - 1493x^2 + 903xu - 105u^2 - 176x + 18u + 4,\\
    & \notag
    7605023584402176072496x^8u^2 + 276848668324194788374x^8u + 2162467398048698636700x^7u^2 -\\ \notag & 6272554892698832692599x^6yu^2 - 4626446567682633747828x^8v - 1168446771586826201673x^8 -\\ \notag & 9165162915676858733619x^7u + 2241777840578137196064x^6yu - 8418141092008037071834x^6u^2 -\\\notag & 13292836185052144419762x^5yu^2 + 754031123597981360894x^7v + 6328906343710703634915x^6yv +\\\notag & 2615195628519325252191x^7 + 1831262799801461507208x^6y + 2756070458250784948869x^6u +\\\notag &
    15428857376010803153841x^5yu - 11784051570902048135703x^5u^2 - 7230872538984499657093x^4yu^2 +\\\notag & 16912156368781966844899x^6v + 8794134244461097697655x^5yv + 13382241469127150196465x^6 +\\\notag & 4082469582390924565047x^5y + 21852540598540798087489x^5u + 13245519579554143163167x^4yu -\\\notag & 22985066915160029536074x^4u^2 - 23255128704790712417887x^3yu^2 + 13682822171560412185605x^5v -\\\notag & 165783020433170604550x^4yv - 6931902302166164206278x^5 - 5083451259029072420619x^4y -\\\notag & 11826350429569203951840x^4u - 19199699515311811452213x^3yu - 28484484698745046075669x^3u^2 -
    \\\notag & 17690076715222265602489x^2yu^2 + 17805473443696348827856x^4v + 675202808346140479378x^3yv +\\\notag & 6675814886892603310402x^4 + 5577161777751351740903x^3y + 19969878692979973055652x^3u +\\\notag & 18120117063433135735083x^2yu + 936713375105971953531x^2u^2 + 11466853454037386066020xyu^2 +\\\notag & 10542523972242190209720x^3v + 8824421921807720328364x^2yv + 11877160806671853672804x^3 +\\\notag & 13363913247174903062953x^2y + 14059453652617340471247x^2u + 10218057833893227356605xyu -\\\notag & 308361787245220032444xu^2 - 5322779956111165805354yu^2 + 5505912629321680476560x^2v -\\\notag & 4290695327689320279111xyv - 7612900075627672207215x^2 - 14312446660999532149696xy +\\\notag & 4434640084437900284987xu + 3704885128833955385271yu - 993796068912520397282u^2 +\\\notag & 57535042100777081983xv + 3829830430486931582408yv + 5885803647094172346013y +\\\notag & 960790192851544016507u + 281506727438003913980v + 113825130829311801917,\\\notag &
    790135714013668417211x^8u^2 - 52199251698889313788x^8u + 445626397822123380960x^7u^2 -\\\notag & 484065148072652139393x^6yu^2 - 355589770017865569639x^8v - 97839554801178078020x^8 -\\\notag & 678398566039036992539x^7u + 155198586393263487818x^6yu - 113052264818131543479x^6u^2 -\\\notag & 874765196307671212424x^5yu^2 + 50893236050896468243x^7v + 549806068461932423405x^6yv +\\\notag & 245852373764948827027x^7 + 222973665578085376766x^6y - 186006391998859651031x^6u +\\\notag & 918135020900189841469x^5yu - 523150712434256670561x^5u^2 - 328927822772590067729x^4yu^2 +\\\notag & 1388867642711454788442x^6v + 882684613081080621057x^5yv + 1142791546745352334216x^6 +\\\notag & 533732004549278022010x^5y + 394464353147344850914x^5u + 874586564270896523236x^4yu -\\\notag & 1503623861758469781638x^4u^2 - 1118256877330123036794x^3yu^2 + 962253617070423872834x^5v +\\\notag & 260675420287904377496x^4yv - 73108557049802456668x^5 - 177841514864980758518x^4y -\\\notag & 1357965873921914116106x^4u - 1595337468013963640622x^3yu - 1882558303840937888797x^3u^2 -\\\notag & 1293922634022119677492x^2yu^2 + 1390753692690189767706x^4v + 246438908010171275168x^3yv +\\\notag & 793691222208583979104x^4 + 499223278514256382778x^3y + 645256167770372257021x^3u +\\\notag & 984786145000107598929x^2yu - 280718524673749556697x^2u^2 + 779933023636684223799xyu^2 +\\\notag & 842189446494471065427x^3v + 558551444022004233780x^2yv + 913241896994237593431x^3 +\\\notag & 1244963363551342949690x^2y + 727117765460043207926x^2u + 1012441030923028187282xyu -\\\notag & 21753359867708939458xu^2 - 344942106360625888966yu^2 + 353025200232170583936x^2v -\\\notag & 211033121948623991455xyv - 163875785683850219832x^2 - 617198754625174179093xy +\\\notag & 597830134728356122829xu + 169901861802716830954yu - 82203224665107226192u^2 +\\\notag & 82310455430799619016xv + 191169787322405231086yv + 341475392487935405751x +\\\notag & 350318508927358217032y - 21028731891073941584u - 9558514495942700720v].
\end{align}
\end{footnotesize}

Now we assume that $u$ and $v$ are elements of the function field of $X$ satisfying $v^2 + (u^3 +u+1) = u^5 - u$. The equation $g_{P_0}$ is given by
\begin{align}
\label{eqn:P0}
g_{P_0} \colonequals& 118016503u^{11} + 793929202u^{10} - 2478346563u^9 - 3325919630u^8 - \\\notag
   & 3561952636u^7 + 2886039937u^6 + 5879367604u^5 - 3830171961u^4 + \\\notag
    &75101411u^3 + 2188669692u^2 - 697370245u + 85830184)/(338078160u^{14} + \\\notag
    &1369216548u^{13} + 2510230338u^{12} + 2713077234u^{11} + 1318504824u^{10} - \\\notag
    &3414589416u^9 - 135231264u^8 - 236654712u^7 - 6668591706u^6 + \\\notag
   & 1850977926u^5 + 3220194474u^4 - 1293148962u^3 + 397241838u^2 - \\\notag
    &8451954u)v + (375507055u^{14} + 718827791u^{13} - 1351825398u^{12} - \\\notag
    &3390292268u^{11} - 6705483125u^{10} + 42915092u^9 - 3840900734u^8 - \\\notag
    &10868247049u^7 + 12659952140u^6 + 12198614901u^5 - 5503860549u^4 - \\\notag
    &1083606073u^3 + 1748789999u^2 - 686641472u + 85830184)/(338078160u^{14} + \\\notag
    &1369216548u^{13} + 2510230338u^{12} + 2713077234u^{11} + 1318504824u^{10} - \\\notag
    &3414589416u^9 - 135231264u^8 - 236654712u^7 - 6668591706u^6 + \\\notag
    &1850977926u^5 + 3220194474u^4 - 1293148962u^3 + 397241838u^2 - \notag
    8451954u).
\end{align}

Similarly, the equation $g_{P_1}$ is given by
\begin{align}
\label{eqn:P1}
g_{P_1} &\colonequals (9192u^{12} + 11490u^{11} + 10341u^{10} + 104559u^9 + 116049u^8 + 189585u^7 + \\\notag
   & 24129u^6 - 659526u^5 - 335508u^4 + 291846u^3 + 135582u^2 + 34470u + \\\notag
   & 1149)/(17360u^{11} + 35588u^{10} + 40362u^9 + 23002u^8 - 18662u^7 -\\\notag
   & 161014u^6 + 333746u^5 - 518630u^4 + 361088u^3 - 108500u^2 + 21266u -\\\notag
   & 434)v + (-9192u^{14} - 2298u^{13} - 8043u^{12} - 118347u^{11} - 181542u^{10} -\\\notag
   & 351594u^9 - 2298u^8 + 689400u^7 - 13788u^6 - 476835u^5 + 65493u^4 +\\\notag
   & 167754u^3 + 52854u^2 - 13788u + 2298)/(17360u^{11} + 35588u^{10} +\\\notag
   & 40362u^9 + 23002u^8 - 18662u^7 - 161014u^6 + 333746u^5 - 518630u^4 +\\\notag
   & 361088u^3 - 108500u^2 + 21266u - 434).\notag
\end{align}

\newcommand{\etalchar}[1]{$^{#1}$}

\end{document}